\documentclass{article}

\usepackage[T1]{fontenc}
\usepackage{enumerate, amsmath, amsfonts, amssymb, amsthm, dsfont, mathrsfs, wasysym, graphics, graphicx, xcolor, url, aliascnt, hyperref, hypcap, xargs, multicol, pdflscape, multirow, hvfloat, array, ae, aecompl, pifont, mathtools, a4wide, float, blkarray, overpic, nicefrac, stmaryrd, anyfontsize, yfonts, fontawesome}
\usepackage{pdflscape}
\usepackage{xargs, bbm, enumerate, paralist}
\usepackage[noabbrev,capitalise]{cleveref}
\usepackage[normalem]{ulem}
\usepackage{marginnote}
\usepackage{mathrsfs}
\usepackage{paralist}
\usepackage{multicol}
\usepackage{enumitem}
\usepackage{svg}
\usepackage{cleveref}
\usepackage[dvipsnames]{xcolor}

\hypersetup{colorlinks=true, citecolor=darkblue, linkcolor=darkblue}
\usepackage[all]{xy}
\usepackage{tikz}
\usepackage{tikz-cd}
\usepackage{tikz-3dplot}
\usetikzlibrary{trees, decorations, decorations.pathmorphing, decorations.markings, decorations.shapes, shapes, arrows, matrix, calc, fit, intersections, patterns, angles}
\graphicspath{{figures/}{figures/diagonals/}{figures/walks/}{figures/tubes/}{figures/blocks/}}
\makeatletter\def\input@path{{figures/}}\makeatother
\usepackage{caption}
\usepackage[export]{adjustbox}
\usepackage{tikzsymbols}
\usepackage{booktabs}
\usepackage{tabularx}

\usepackage{paralist}

\DeclareFontEncoding{LY}{}{}
\DeclareFontSubstitution{LY}{yfrak}{m}{n}
\DeclareFontEncoding{LYG}{}{}
\DeclareFontSubstitution{LYG}{ygoth}{m}{n}
\DeclareFontFamily{LYG}{ygoth}{}
\DeclareFontShape{LYG}{ygoth}{m}{n}{<->ygoth}{}
\DeclareFontFamily{LY}{yfrak}{}
\DeclareFontShape{LY}{yfrak}{m}{n}{<->yfrak}{}
\DeclareFontFamily{LY}{ysmfrak}{}
\DeclareFontShape{LY}{ysmfrak}{m}{n}{<->ysmfrak}{}
\DeclareFontFamily{LY}{yswab}{}
\DeclareFontShape{LY}{yswab}{m}{n}{<->yswab}{}

\theoremstyle{plain}

\newtheorem{theorem}{Theorem}[section]

\newaliascnt{corollary}{theorem}
\newtheorem{corollary}[corollary]{Corollary}
\aliascntresetthe{corollary}

\newaliascnt{proposition}{theorem}
\newtheorem{proposition}[proposition]{Proposition}
\aliascntresetthe{proposition}

\newaliascnt{lemma}{theorem}
\newtheorem{lemma}[lemma]{Lemma}
\aliascntresetthe{lemma}

\newaliascnt{conjecture}{theorem}

\aliascntresetthe{conjecture}

\newtheorem*{theorem*}{Theorem}

\theoremstyle{definition}

\newaliascnt{definition}{theorem}
\newtheorem{definition}[definition]{Definition}
\aliascntresetthe{definition}

\newaliascnt{example}{theorem}
\newtheorem{example}[example]{Example}
\aliascntresetthe{example}

\newaliascnt{remark}{theorem}
\newtheorem{remark}[remark]{Remark}
\aliascntresetthe{remark}

\newaliascnt{question}{theorem}
\newtheorem{question}[question]{Question}
\aliascntresetthe{question}

\newaliascnt{notation}{theorem}

\aliascntresetthe{notation}

\newaliascnt{assumption}{theorem}

\aliascntresetthe{assumption}

\newaliascnt{convention}{theorem}

\aliascntresetthe{convention}

\crefname{theorem}{theorem}{theorems}
\Crefname{theorem}{Theorem}{Theorems}

\crefname{corollary}{corollary}{corollaries}
\Crefname{corollary}{Corollary}{Corollaries}

\crefname{proposition}{proposition}{propositions}
\Crefname{proposition}{Proposition}{Propositions}

\crefname{lemma}{lemma}{lemmas}
\Crefname{lemma}{Lemma}{Lemmas}

\crefname{conjecture}{conjecture}{conjectures}
\Crefname{conjecture}{Conjecture}{Conjectures}

\crefname{definition}{definition}{definitions}
\Crefname{definition}{Definition}{Definitions}

\crefname{example}{example}{examples}
\Crefname{example}{Example}{Examples}

\crefname{remark}{remark}{remarks}
\Crefname{remark}{Remark}{Remarks}

\crefname{question}{question}{questions}
\Crefname{question}{Question}{Questions}

\crefname{notation}{notation}{notations}
\Crefname{notation}{Notation}{Notations}

\crefname{assumption}{assumption}{assumptions}
\Crefname{assumption}{Assumption}{Assumptions}

\crefname{convention}{convention}{conventions}
\Crefname{convention}{Convention}{Conventions}

\crefname{equation}{equation}{equations}
\Crefname{equation}{Equation}{Equations}

\newtheorem{maintheorem}{Theorem}

\crefname{maintheorem}{theorem}{theorems}
\Crefname{maintheorem}{Theorem}{Theorems}

\newcommand{\R}{\mathbb{R}} 
\newcommand{\N}{\mathbb{N}} 
\newcommand{\Z}{\mathbb{Z}} 
\renewcommand{\b}[1]{{\boldsymbol{#1}}} 

\renewcommand{\epsilon}{\varepsilon} 
\newcommand{\polytope}[1]{\mathsf{#1}}

\newcommand\pol{\polytope{P}}

\newcommand{\mathdefn}[1]{{\darkblue #1}} 

\DeclareMathOperator{\conv}{conv} 
\DeclareMathOperator{\nVol}{nVol}

\DeclareMathOperator{\relint}{relint}

\newcommand{\ie}{\textit{i.e.,}~} 
\definecolor{darkblue}{rgb}{0,0,0.7} 
\definecolor{green}{RGB}{57,181,74} 
\definecolor{violet}{RGB}{147,39,143} 
\newcommand{\darkblue}{\color{darkblue}} 
\newcommand{\defn}[1]{\textsl{\darkblue #1}} 

\newcommand{\Pyr}{\polytope{Pyr}}

\DeclareMathOperator{\aff}{aff}

\newcommandx{\Epoly}[3][1=\b s, 2=n, 3=t]{E_{#2}^{#1}(#3)}

\AtEndDocument{\bigskip{\footnotesize%
 \textsc{Universität Osnabrück, Germany} \\
Martina Juhnke: \texttt{martina.juhnke@uni-osnabrueck.de} \\ 
Justus Bruckamp \\
Jhon  B. Caicedo: \texttt{jhon.bladimir.caicedo.portilla@uni-osnabrueck.de}\par }}

\begin{document}

\title{Unimodular triangulations and Ehrhart theory for two families of Hermite normal form simplices}
\author{Justus Bruckamp, Jhon B. Caicedo and Martina Juhnke}
\date{}

\maketitle

\section*{Abstract}
We study regular unimodular triangulations, the integer decomposition property, and Ehrhart-theoretic properties of two families of Hermite normal form simplices.
We first consider the one-row case associated with the vector $(N - 1, \dots ,N - 1 , N)\in \N^d$, and completely characterize when the corresponding simplices admit a regular unimodular triangulation. 
Our constructions are explicit and also yield closed formulas for the $h^\ast$-polynomial and the local $h^\ast$-polynomial.
Moreover, we prove Ehrhart positivity and derive explicit dimension-dependent conditions under which the Ehrhart polynomial is not unimodal.

Finally, we extend our approach to the two-row cases associated with $(1, \dots ,1 , N)\in \N^d$ and $(M-1, \dots ,M-1, M, 0)\in \N^d$.
In these cases, we construct regular unimodular triangulations, derive closed formulas for the $h^\ast$-polynomial and the local $h^\ast$-polynomial, and prove Ehrhart positivity.

\section*{Acknowledgements}
Jhon B. Caicedo was supported by DFG grant JU 3097/4-1. 
He is grateful to Benjamin Braun and Andr\'es R. Vindas-Mel\'endez for their mathematical advice during the preparation of this work.

\section{Introduction}

The integer decomposition property, unimodular triangulations, and Ehrhart-theoretic properties are central topics in the theory of lattice polytopes. 
A lattice polytope admitting a unimodular triangulation satisfies the integer decomposition property (see \cite{Unimodular_triangulation_positive_result}), and when such a triangulation is shellable, the $h^\ast$-polynomial can be read directly from the combinatorics of the triangulation \cite{stanley1980decompositions}. 
Thus, explicit constructions of unimodular triangulations provide a useful bridge between discrete geometry, Ehrhart theory, and combinatorial commutative algebra.

In this paper, we study these questions for two classes of Hermite normal form simplices. 
Every lattice simplex is unimodularly equivalent to a simplex in Hermite normal form \cite{MR874114}, making this class a natural framework for explicit constructions and computations. 
We focus first on the one-row case. 
Although this family has been studied before (see \cite{Bajo2025LocalF, Hibi_Hermite}), only a few results are known concerning the integer decomposition property and unimodular triangulations for these simplices. 
Some of these results are formulated in terms of weighted projective space simplices; see \cite[Remark~1]{IDP_1} for  the  one-row Hermite normal form simplex.

For the purposes of this article, we focus on the family of one-row Hermite normal form simplex associated with vectors of the form $\b a=(N-1,\ldots,N-1,N)\in\N^d$.
This family was studied in \cite[Section~3.1]{Hibi_Hermite}, where the main objective was to study the shifted symmetry property of its $h^\ast$-vector. 
Our approach is different: we construct explicit triangulations and obtain the following characterization.
\begin{maintheorem}[\Cref{thm:Unimodular_triangulation},  \Cref{thm:not_IDP_2},  \Cref{thm:Not_IDP_1}]\label{thm:A}
Let $\polytope{S}_{\b a}$ be the $d$-dimensional one-row Hermite normal form simplex associated with $\b a=(N-1,\ldots,N-1,N)$, where $N>1$. 
Then the following hold:
\begin{enumerate}
    \item if $N=kd+1$ for some $k\in\N$, then $\polytope{S}_{\b a}$ admits a flag, regular, and unimodular triangulation;
    \item if $N=kd$ for some $k\in\N$, then $\polytope{S}_{\b a}$ admits a regular and unimodular triangulation, which is also flag when $d=2$; 
    \item if $N$ is not of the form $kd$ or $kd+1$ for any $k\in\N$, then $\polytope{S}_{\b a}$ does not satisfy the integer decomposition property, and hence it does not admit a unimodular triangulation.
\end{enumerate}
\end{maintheorem}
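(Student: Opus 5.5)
The plan rests on a complete description of the box points of $\polytope{S}_{\b a}$. I use the one-row Hermite normal form vertex description $\polytope{S}_{\b a}=\conv(\b 0,e_1,\dots,e_{d-1},v)$ with $v=(N-1,\dots,N-1,N)$, so that $\nVol(\polytope{S}_{\b a})=N$.

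\emph{Box points.} For $0\le i\le N-1$ the fundamental parallelepiped of the cone over $\polytope{S}_{\b a}$ contains exactly one point. Its barycentric coordinates are $i/N$ on $v$ and $\{-i(N-1)/N\}=i/N$ on each $e_j$. Its spatial part is therefore $\tfrac iN\bigl(v+e_1+\dots+e_{d-1}\bigr)=i\cdot\mathbf 1$, where $\mathbf 1=(1,\dots,1)$, and its height is $\lceil di/N\rceil$. Two consequences follow. First, $h^\ast(t)=\sum_{i=0}^{N-1}t^{\lceil di/N\rceil}$. Second, the lattice points of $\polytope{S}_{\b a}$ are the vertices together with the diagonal points $c_j=j\mathbf 1$ for $0\le j\le k:=\lfloor N/d\rfloor$.

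\emph{Part (3).} I work with the graded semigroup of the cone. A lattice point at height $h$ lies in the residue class $i \bmod N$ determined by its box component. The height-one generators have classes $0$ (the vertices) or $j\le k$ (the points $c_j$). Hence a point at height $h$ that is a sum of $h$ height-one points has class $\equiv j_1+\dots+j_h$ with all $j_\ell\le k$.

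Now write $N=kd+r$ with $2\le r\le d-1$; when $k=0$ this simply means $2\le N<d$. Set $h=\lceil d/r\rceil\le d-1$ and $i=hk+1$. Then $di=hkd+d\le hkd+hr=hN$, so the box point of class $i$ has height at most $h$. Lifting it by copies of the apex $\b 0$ gives a lattice point of $h\polytope{S}_{\b a}$ in class $i$.

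This point is not a sum of $h$ lattice points of $\polytope{S}_{\b a}$. Indeed, the class of such a sum, taken as a representative in $[0,N)$, is at most $hk<i$. We also have $i<N$ because $h<d$, and this is exactly where $r\neq 0,1$ is used. Hence IDP fails, and so does every unimodular triangulation.

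\emph{Parts (1) and (2).} When $N=kd+1$ or $N=kd$, I build the triangulation explicitly from the chain $\b 0=c_0,c_1,\dots,c_k$ on the main diagonal. The triangulation is obtained by successively pulling (stellarly subdividing) $c_1,\dots,c_k$. Concretely, for each $0\le j<k$ it contains the $d$ simplices
\[
\conv\bigl(c_j,\;c_{j+1},\;e_1,\dots,e_{d-1},v \text{ with one of } e_1,\dots,e_{d-1},v \text{ omitted}\bigr).
\]
In the case $N=kd+1$ there is in addition one final simplex $\conv(c_k,e_1,\dots,e_{d-1},v)$. In the case $N=kd$ the final layer is modified, because $c_k$ then lies on the boundary. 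This gives $kd+1$, respectively $kd$, simplices, matching $\nVol(\polytope{S}_{\b a})=N$. So once each simplex is shown to be unimodular, by a direct determinant computation using that $c_{j+1}-c_j=\mathbf 1$, it suffices to check that the simplices have disjoint interiors.

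Disjointness I will show by projecting along the diagonal: the segment $[c_j,c_{j+1}]$ is joined to the boundary facets of a fixed $(d-1)$-simplex. Regularity comes from the lifting $\omega(c_j)=j^2$ (convex along the chain) with zero on the vertices, or equivalently from the fact that pulling triangulations are regular. Flagness in case (1), and for $d=2$ in case (2), I will verify by listing the minimal non-faces. These should be the pairs $\{c_j,c_\ell\}$ with $|j-\ell|\ge2$, plus pairs involving $c_j$ and the omitted vertex where relevant. For $d>2$ in case (2) the final layer creates a non-face of size larger than two, which is why flagness is not claimed there.

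\emph{Main obstacle.} I expect the hardest step to be the covering and unimodularity check in case (2). Here $c_k=k\mathbf 1$ satisfies $dk=N$, so it lies on the facet opposite $\b 0$, and the last layer must be re-cut carefully. I would first try coning $c_k$ over the facets not containing it.
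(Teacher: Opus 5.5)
\textbf{The gap: flagness in case (1).} This step cannot be carried out, and the list of minimal non-faces you predict is wrong. In your triangulation every $c_j$ is joined to every vertex of $\Delta=\conv(\b v_1,\dots,\b v_d)$. Indeed, each $\b v_i$ lies in some facet $F$ of $\Delta$, so there are no non-edges ``between $c_j$ and the omitted vertex''. The $\b v_i$ are also pairwise joined inside $\conv(c_k,\Delta)$. Yet for $0\le j<k$ the set $\{c_j,\b v_1,\dots,\b v_d\}$ lies in no maximal simplex, while each of its proper subsets does. So it is a minimal non-face of size $d+1\ge 3$.

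No other triangulation helps. For $N=kd+1$ each $c_j$ has barycentric coordinates $(1-dj/N,j/N,\dots,j/N)$, all positive, so $\partial\polytope{S}_{\b a}$ contains no lattice points besides the vertices. Hence every lattice triangulation contains all proper faces of $\polytope{S}_{\b a}$. Then $\{\b v_0,\dots,\b v_d\}$ is a minimal non-face unless the triangulation is $\{\polytope{S}_{\b a}\}$, which is not unimodular since $N>1$. The smallest instance is $d=2$, $N=3$: $\conv((0,0),(1,0),(2,3))$ has $(1,1)$ as its only further lattice point. Its only unimodular triangulation is the stellar subdivision at $(1,1)$, and the outer triangle is an empty triangle.

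So the word ``flag'' in item (1) is false for every $d\ge 2$, and no argument can establish it. The paper's proof obtains it from \Cref{remark:flag_regular}, but placing a point beyond several facets does not preserve flagness. The paper's own triangulation for $\overline{\b a}=(4,5)$ contains the empty triangle $\{(0,0),(1,0),(4,5)\}$. The same clique argument shows that for $d\ge 3$ and $N=kd$ no unimodular triangulation is flag. It also explains why $d=2$, $N=2k$ is the only flag case: there $c_k$ is the midpoint of $[\b v_1,\b v_2]$ and breaks the clique.

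\textbf{Fixable points and comparison with the paper.} Your lifting is wrong. With height $0$ on all vertices and $j^2>0$ on $c_j$, every lifted $c_j$ lies above the hyperplane through the lifted vertices, so the induced subdivision is trivial. Either of the following works instead:
\begin{itemize}
\item use that a stellar subdivision of a regular triangulation at a new point is again regular (lower that point slightly below the current lower envelope);
\item take heights $0$ at $\b v_0$, $j^2$ at $c_j$, and a sufficiently large constant at $\b v_1,\dots,\b v_d$.
\end{itemize}
``Pulling triangulations are regular'' is not the right justification here: pulling $c_1$ first in the usual sense makes $c_1$ a vertex of every maximal simplex, so $c_2,\dots,c_k$ would not be used.

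For $N=kd$ nothing needs re-cutting. The point $c_k$ is the barycenter of $\Delta$, not on a facet of $\Delta$, so $\conv(c_{k-1},c_k,F)$ is full-dimensional and your determinant computation gives normalized volume $1$ verbatim. The only change is that the extra simplex $\conv(c_k,\Delta)$ disappears.

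With these corrections your triangulations coincide with the paper's collinear cone triangulations, and regularity and unimodularity go through. The paper places $c_k,\dots,c_1,\b v_0$ successively over $\Delta$, whereas you subdivide stellarly from $\b v_0$ outward.

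Your part (3) is correct and takes a different route from the paper. The paper bounds coordinates when $k\ge 1$ and compares Hilbert series when $1<N<d$. Your class count in $\Gamma\cong\Z/N$ handles both cases uniformly, using the same witness $(hk+1)\mathbf 1$, and also yields the lattice-point description and $h^\ast$ directly.
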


The positive cases are proved constructively. 
We first describe the lattice points of $\polytope{S}_{\b a}$ and then use those lying on a distinguished line to construct a collinear cone triangulation. 
This keeps the triangulation explicit and relies only on combinatorial and geometric arguments. 
The same triangulations also allow us to compute the $h^\ast$-polynomial by means of shellings, leading to the following formulas.
\begin{maintheorem}[\Cref{thm:h_vector_OneR},  \Cref{thm:Local_one_row_dk}] \label{thm:B}
Let $\polytope{S}_{\b a}$ be the one-row Hermite normal form simplex associated with 
$\b a=(N-1,\ldots,N-1,N)$. Then
$$
h^\ast_{\polytope{S}_{\b a}}(t)=
\begin{cases}
1+k\displaystyle\sum_{j=1}^{d-1}t^j+(k-1)t^d, & \text{if } N=dk,\\[0.3em]
1+k\displaystyle\sum_{j=1}^{d}t^j, & \text{if } N=dk+1.
\end{cases}
$$
Moreover, the corresponding local $h^\ast$-polynomials are
$$
\ell^\ast_{\polytope{S}_{\b a}}(t)=
\begin{cases}
(k-1)\displaystyle\sum_{i=1}^{d}t^i, & \text{if } N=dk,\\[0.3em]
k\displaystyle\sum_{i=1}^{d}t^i, & \text{if } N=dk+1.
\end{cases}
$$
\end{maintheorem}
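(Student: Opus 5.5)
We compute $h^\ast$ and $\ell^\ast$ directly from the fundamental parallelepiped of the cone over $\polytope{S}_{\b a}$. Since this is a simplex, no triangulation is needed for this step. The shellings of the triangulations from \Cref{thm:A} should give the same numbers and can serve as a consistency check. We use the standard realization of the one-row Hermite normal form simplex:
$$\polytope{S}_{\b a}=\conv(0,e_1,\dots,e_{d-1},v),\qquad v=(a_1,\dots,a_{d-1},N)=(N-1,\dots,N-1,N),$$
whose normalized volume is $N$. Homogenizing, the vertices become $w_0=(0,1)$, $w_i=(e_i,1)$ for $1\le i\le d-1$, and $w_d=(v,1)$. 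Then $h^\ast_i$ is the number of lattice points $\sum_{i=0}^d\lambda_iw_i$ with $\lambda_i\in[0,1)$ whose last coordinate $\sum_i\lambda_i$ equals $i$. For $\ell^\ast_i$ we count the same points, but require every $\lambda_i\neq0$.

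\textbf{Step 1: list the box points.} The lattice generated by the $w_i$ has index $N$ in $\Z^{d+1}$, so there are exactly $N$ box points. We parametrize them by $\lambda_d=j/N$ with $j=0,\dots,N-1$. The coordinate conditions are:
\begin{itemize}
\item for $1\le i\le d-1$, integrality of coordinate $i$ requires $\lambda_i+\lambda_d(N-1)\in\Z$, hence $\lambda_i=\{-j(N-1)/N\}=\{j/N\}=j/N$;
\item the last spatial coordinate $\lambda_dN=j$ is automatically an integer;
\item $\lambda_0$ is the fractional part making $\sum_i\lambda_i$ an integer, namely $\lambda_0=\lceil dj/N\rceil-dj/N$.
\end{itemize}
So the $j$-th box point has height $\lceil dj/N\rceil$ for $j\ge1$, and height $0$ for $j=0$. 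It is interior (all $\lambda_i\ne0$) exactly when $j\ge1$ and $N\nmid dj$.

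\textbf{Step 2: count by height.} For $1\le i\le d$, we must count the $j\in\{1,\dots,N-1\}$ with $(i-1)N/d<j\le iN/d$.
\begin{itemize}
\item If $N=dk+1$, then $iN/d=ik+i/d$. Hence $\lfloor iN/d\rfloor=ik$ for $i<d$, and the interval contributes exactly $k$ integers. For $i=d$, the upper bound becomes $j\le N-1=dk$, which again gives $k$ integers. This yields $1+k\sum_{j=1}^dt^j$.
\item If $N=dk$, the interval is $(k(i-1),ki]$, which contains $k$ integers. For $i=d$, the cap $j\le N-1$ removes one of them, giving $1+k\sum_{j=1}^{d-1}t^j+(k-1)t^d$.
\end{itemize}

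\textbf{Step 3: the local polynomial.} We discard $j=0$ and the non-interior points, namely those $j\ge1$ with $N\mid dj$.
\begin{itemize}
\item If $N=dk+1$, then $\gcd(d,N)=1$. The condition $N\mid dj$ forces $N\mid j$, which is impossible for $1\le j\le N-1$. So every $j\ge1$ is interior and $\ell^\ast=k\sum_{i=1}^dt^i$.
\item If $N=dk$, then $N\mid dj$ exactly when $k\mid j$. These are $j=k,2k,\dots,(d-1)k$, lying at heights $1,\dots,d-1$, one at each height. Removing them lowers each of the coefficients $1,\dots,d-1$ from $k$ to $k-1$. The coefficient of $t^d$ is already $k-1$, so $\ell^\ast=(k-1)\sum_{i=1}^dt^i$.
\end{itemize}

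\textbf{Main obstacle.} The step needing the most care is fixing the normalization of the one-row Hermite normal form: which vertex carries the row $\b a$, and the sign in $\{-j a_i/N\}$. With the opposite convention one gets $\lambda_i=\{-j/N\}$, which permutes the $j$'s by $j\mapsto N-j$ but gives the same multiset of heights by symmetry. I would verify the convention against small cases, for instance $d=2$, $N=2$, where the formula predicts $h^\ast=1+t$. The boundary cases $i=d$ in Step 2 are the only other places where off-by-one errors can occur.
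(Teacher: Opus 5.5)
Your proposal is correct, but it takes a different route from the paper for most of the statement.

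\textbf{How the paper argues.}
\begin{itemize}
\item For $h^\ast_{\polytope{S}_{\b a}}$, the paper uses the unimodular collinear cone triangulation of \Cref{thm:Unimodular_triangulation}. Its shelling (\Cref{lem:shelling_collinear_cone}) feeds into \Cref{thm:hstar_collinear_cone}, giving $h^\ast_{\polytope{P}_0}(t)+(m-1)(t+\cdots+t^d)$ with $m=k$ resp.\ $m=k+1$. This depends on the lattice-point classification of \Cref{prop:Interio_points} and on unimodularity via the volume count.
\item For $\ell^\ast$ with $N=dk$, the paper does essentially your Step 3. It lists $\Gamma$ as $\operatorname{frac}(m\overline{\b a})$, whose zeroth coordinate $\operatorname{frac}(-m/k)=1-r/k$ is your $\lambda_0=\lceil j/k\rceil-j/k$, and it finds ages $q+1$.
\item For $\ell^\ast$ with $N=dk+1$, the paper does no computation. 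It observes that $h^\ast$ is shifted symmetric and invokes \cite[Prop.~2.17]{Bajo2025LocalF}.
\end{itemize}

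\textbf{What your route buys.} One parametrization of the box points (height $\lceil dj/N\rceil$, interior iff $N\nmid dj$) yields all four formulas by elementary counting. It is self-contained and needs neither the triangulation nor the external citation. It also shows directly why $\ell^\ast=h^\ast-1$ when $N=dk+1$: since $\gcd(d,N)=1$, every nonzero box point is interior. Finally, it applies verbatim to every $N$, not only $N\in\{dk,dk+1\}$.

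\textbf{What the paper's route buys.} It shows how the $h^\ast$-vector is read off the explicit triangulation, which ties \Cref{thm:B} to \Cref{thm:A}. However, it is only available where a unimodular triangulation exists.

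\textbf{Convention remark.} Your worry about conventions is unnecessary. Your realization, with last vertex $(N-1,\dots,N-1,N)$, is exactly the paper's $\polytope{S}_{\b a}$. So you can drop the side remark about an ``opposite convention'', which you do not justify in any case.
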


These formulas also allow us to study the Ehrhart polynomial of this family of simplices. 
Ehrhart positivity and the unimodality of the coefficient sequence of the Ehrhart polynomial are classical problems in Ehrhart theory. 
Liu and Solus provide a comprehensive overview of results on these questions for several families of lattice polytopes in \cite{Solus_Fu_Unimodal}.
For the family considered here, we prove Ehrhart positivity in the cases $N=dk$ and $N=dk+1$. 
However, Ehrhart positivity does not force Ehrhart unimodality in this family. 
More precisely, \Cref{thm:explicit_non_unimodality_bound} gives explicit
dimension-dependent bounds on $k$ that guarantee non-unimodality in each of the
two cases. 

The final part of the paper shows that our constructions can be extended beyond the one-row setting.
This extension is motivated by the one-row simplex associated with $\b a=(1,\ldots,1,N)$ (studied in \cite{Bajo2025LocalF}), which does not satisfy the integer decomposition property for $d>2$ and $N>1$. 
In contrast, after adding a second row of the form $\b b=(M-1,\ldots,M-1,M,0)$, the corresponding two-row simplex admits a unimodular triangulation in two natural cases, which in particular implies that it satisfies the integer decomposition property.

\begin{maintheorem}[\Cref{thm_UT_2}]\label{thm:C}
Let $\polytope{S}_{\b a,\b b}$ be the two-row Hermite normal form simplex associated with
$\b a=(1,\ldots,1,N)$ and $\b b=(M-1,\ldots,M-1,M,0)$, where $N>1$.
If $M=d-1$ or $M=d$, then $\polytope{S}_{\b a,\b b}$ admits a regular and unimodular triangulation.
When $d=3$ and $M=d-1$, the triangulation can additionally be chosen flag.
\end{maintheorem}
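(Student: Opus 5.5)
The plan is to reduce to the one-row family already handled by \Cref{thm:A}, and then lift its triangulation through the last coordinate by a collinear cone construction like the one used for the one-row case. First I would fix the vertex description. I take $\polytope{S}_{\b a,\b b}$ to be the convex hull of the origin and the rows of the Hermite normal form matrix, namely
\[
0,\ e_1,\dots,e_{d-2},\ v=(M-1,\dots,M-1,M,0),\ w=(1,\dots,1,N).
\]
Its normalized volume is then $MN$.

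The key observation concerns the facet $F=\conv(0,e_1,\dots,e_{d-2},v)$, which lies in the hyperplane $x_d=0$. It is exactly the $(d-1)$-dimensional one-row Hermite normal form simplex associated with $(M-1,\dots,M-1,M)\in\N^{d-1}$. For $M=d-1$ we are in the case $N=k(d-1)$ of \Cref{thm:A} with $k=1$. For $M=d$ we are in the case $N=k(d-1)+1$ with $k=1$. Hence $F$ admits a regular unimodular triangulation $\mathcal T_F$ with normalized volume $M$. When $M=d$ this triangulation is also flag, and when $d=3$ and $M=d-1$ the facet $F$ is $2$-dimensional and the flag clause of \Cref{thm:A}(2) applies. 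This already accounts for the flagness statement in \Cref{thm:C}.

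Next I would describe all lattice points of $\polytope{S}_{\b a,\b b}$ by height $x_d=j$ for $0\le j\le N$. Using the facet inequalities of the simplex, which come from the inverse of the HNF matrix, I would show that every slice at height $j$ is an integral translate or dilate of a piece of $F$. More precisely, I expect to find lattice points $q_0\in F,q_1,\dots,q_N=w$ lying on a common line $\ell$, with $q_j$ at height $j$. The natural candidates are $q_j=\bigl(\lceil j/N\rceil\text{-type corrections}\bigr)$; concretely, I would first try the points $q_j=(c_j,\dots,c_j,c'_j,j)$ that are forced by the slice inequalities. I would also verify that no other lattice points appear beyond those needed. This point count must be consistent with the volume $MN$.

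With the collinear points in hand, I would build the triangulation as follows. For every facet $G$ of $\polytope{S}_{\b a,\b b}$ not containing $\ell$, triangulate $G$ unimodularly. Then cone each of its maximal simplices over the consecutive segments $[q_j,q_{j+1}]$ of $\ell$ that it sees, or equivalently take joins of the boundary triangulation with the subdivided line. The facet $F$ is triangulated by $\mathcal T_F$, and the facets through $w$ are pyramids over faces of $F$ or over lower-dimensional one-row simplices, which are handled the same way.

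Unimodularity would follow from a determinant computation: each join $\conv(\sigma\cup\{q_j,q_{j+1}\})$ has normalized volume equal to the lattice height of $q_{j+1}-q_j$ over the affine span of $\sigma$, times the volume of $\sigma$. The lattice height should be $1$ because $q_{j+1}-q_j$ has last coordinate $1$. Regularity would come from a height function that is convex along $\ell$, for instance $j\mapsto j^2$ on the $q_j$ and $0$ elsewhere, combined with the regular heights on $\mathcal T_F$.

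The main obstacle I expect is the second step. I need to identify the points $q_j$ explicitly, show that they are collinear, and check that the resulting cells really cover $\polytope{S}_{\b a,\b b}$ with disjoint interiors. The first thing I would try is to compute the slices for small $d$ (say $d=3,4$) to guess the line. After that I would verify coverage by checking that the normalized volumes of the cells sum to $MN$.
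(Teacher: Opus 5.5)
\textbf{Verdict: same strategy as the paper, but the step that carries the proof is left undone, and the cell description needs correcting.}

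Your architecture is the paper's: triangulate the bottom facet $F=\polytope{S}_{\b a,\b b}\cap\{x_d=0\}$, then cone along the lattice points of a segment ending at $w=(1,\ldots,1,N)$. Recognising $F$ as the $(d-1)$-dimensional one-row simplex of \Cref{thm:A} with $k=1$ treats $M=d$ and $M=d-1$ uniformly. It yields exactly the paper's triangulations; the paper builds them by hand, using a lattice pyramid over $\conv(\b v_1,\ldots,\b v_d)$ when $M=d-1$.

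However, you never determine the line or the points $q_j$. Your guesses (ceiling-type corrections, slices being translates or dilates of pieces of $F$) point the wrong way. The missing observation is already in your hands. By \Cref{prop:Interio_points} with $k=1$, the only non-vertex lattice point of $F$ is $q_0=(1,\ldots,1,0)$. Since $\nVol(F)=M>1$, no maximal cell of a unimodular triangulation $\mathcal T_F$ can avoid $q_0$ (it would have to be $F$ itself). So every maximal cell is $\conv(\tau\cup\{q_0\})$ for a $(d-2)$-simplex $\tau$. The point $q_0$ lies directly below $w$, so
\[
q_j=(1,\ldots,1,j)=\Bigl(1-\tfrac{j}{N}\Bigr)q_0+\tfrac{j}{N}\,w\in\polytope{S}_{\b a,\b b},\qquad q_{j+1}-q_j=e_d .
\]
This is the content of \Cref{prop:Interio_points_two}. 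For mere existence of the triangulation you do not need its other half, namely that there are no further lattice points.

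Your Step~3 must also be restated. Coning a maximal simplex of a facet of $\polytope{S}_{\b a,\b b}$ over $[q_j,q_{j+1}]$ gives $d+2$ points, which is not a simplex. The cells should be $\conv(\tau\cup\{q_j,q_{j+1}\})$ for the $M$ simplices $\tau$ above and $0\le j\le N-1$. Equivalently, the triangulation is the join of $\mathrm{lk}_{\mathcal T_F}(q_0)$ with the path $q_0q_1\cdots q_N$.
\begin{itemize}
\item Coverage: $\polytope{S}_{\b a,\b b}=\conv(F\cup\{w\})=\bigcup_\tau\conv(\tau\cup\{q_0,w\})$, and each of these simplices is cut along $[q_0,w]$ by the $q_j$.
\item Unimodularity: there are $MN$ cells, so \Cref{lem:volume_count_unimodular} applies. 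Your determinant argument also works: since $\tau\subset\{x_d=0\}$ and $q_{j+1}-q_j=e_d$, the determinant reduces to $\nVol\conv(\tau\cup\{q_0\})=1$.
\item Regularity: your convex heights $j\mapsto j^2$ along the segment, with $q_0$ lowered slightly, do work.
\item Flagness: flagness of $\mathcal T_F$ does transfer, but the reason is the join description. Links of flag complexes are flag, and joins of flag complexes are flag; you should state this. It then settles $d=3$, $M=2$ via \Cref{thm:A}(2) applied to $\conv((0,0),(1,0),(1,2))$.
\end{itemize}
Finally, drop the aside that $\mathcal T_F$ is flag for $M=d$. For $d\ge 3$ it is $q_0*\partial F$, in which the $d$ vertices of $F$ form a minimal non-face. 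So the flag clause of \Cref{thm:A}(1) fails for $k=1$ and should not be invoked there.
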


For this two-row family, we also compute the $h^\ast$-polynomials and the local
$h^\ast$-polynomial, completely classify when these simplices have the integer decomposition property for $M\leq d$, and
prove Ehrhart positivity.
Specifically, \Cref{coro:idp_classification_two_rows} shows that, for $M\leq d$, both, 
the integer decomposition property and the existence of a unimodular triangulation, are equivalent to
$M\in\{d-1,d\}$.  Moreover, \Cref{thm:local_h_two_rows} gives
$$
\ell^\ast_{\polytope{S}_{\b a,\b b}}(t)=
\begin{cases}
0, & \text{if } M=d-1,\\[0.2em]
(N-1)\displaystyle\sum_{i=1}^d t^i, &\text{if }M=d.
\end{cases}
$$
If $M=d$, the Ehrhart polynomial has the same form as in the one-row case
$N=dk$, with $k$ replaced by $N$; hence it is not unimodal whenever
$N\geq K_D(d)$ in the notation of
\Cref{thm:explicit_non_unimodality_bound}.

The paper is organized as follows. 
In \Cref{sect:prel}, we recall the necessary background on Hermite normal form simplices, Ehrhart theory, unimodular triangulations, and collinear cone triangulations. 
 \Cref{sect:IDP} focuses on the  integer decomposition property and proves \Cref{thm:A}. 
In \Cref{sect:1row}, we compute the $h^\ast$-polynomial, the local $h^\ast$-polynomial, and the Ehrhart polynomial of the considered  one-row family and, in particular, we prove \Cref{thm:B}.
 \Cref{sect:2row} contains all results concerning two-row Hermite normal form
simplices, including the proofs of \Cref{thm:C} and \Cref{thm:local_h_two_rows}. 

\section{Preliminaries}\label{sect:prel}

In this section, we provide the necessary background on  Hermite normal form simplices,  Ehrhart theory and unimodular triangulations. 
We will assume basic knowledge on polytopes and refer to \cite{Bajo2025LocalF, Unimodular_triangulation_positive_result, beck2015computing, Ziegler} for further details.

\subsection{Hermite normal form simplices}\label{sub:Hermite_normal_form}

A  polytope $\polytope{P}\subset \R^d$ is a called a \defn{lattice polytope} if all its vertices belong to $\Z^d$. 
In particular, $\polytope{P}$ is a $d$-dimensional \defn{lattice simplex} if it is the convex hull of $d+1$ affinely independent lattice points.
Two lattice polytopes $\pol,\polytope{Q}\subseteq\R^d$ are \defn{unimodularly equivalent} if $\polytope{Q}=\varphi(\pol)$ for some \defn{affine unimodular transformation} $\varphi$ (\ie  $\mathdefn{\varphi(\b x)}=U\b x+\b b$, where $U$ is an integer matrix satisfying $|\det(U)|=1$ and $\b b\in\Z^d$).
This notion allows us to represent every lattice simplex, up to unimodular equivalence, in a particularly useful normal form (see e.g., \cite[Section~4.1]{MR874114}). 

\begin{theorem}
Every lattice simplex $\polytope{S}$ of dimension $d$ in $\R^d$ is unimodularly equivalent to a simplex $\polytope{S}_H$ defined as the convex hull of the rows of a $(d + 1) \times d$ integer matrix $H=(a_{ij})_{\substack{0\leq i\leq d\\1\leq j\leq d}}$ of the following form:
\begin{itemize}
    \item $a_{0,j} = 0 $ for $ j = 1, \ldots, d$
    \item $a_{i,i}\in\N$ for $ i = 1, \ldots, d$
    \item $0\leq a_{i,j} < a_{i,i}$ when $j<i$ for $ i = 1, \ldots, d$
    \item $a_{i,j} = 0$ for $j>i$ for $ i = 1, \ldots, d$
\end{itemize}
Moreover, $H$ is uniquely determined and called the \defn{Hermite normal form} of $\polytope{S}$. 
\end{theorem}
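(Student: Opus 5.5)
First translate so that one vertex $v_0$ of $\polytope{S}$ sits at the origin; translation by the integer vector $-v_0$ is an affine unimodular map. Form the integer matrix $V$ whose rows are $v_1-v_0,\dots,v_d-v_0$. Since the simplex is $d$-dimensional, $V$ is nonsingular. For $U\in GL_d(\Z)$, the map $\b x\mapsto U\b x$ sends the simplex to the convex hull of $0$ and the rows of $VU^{T}$. Thus applying a linear unimodular transformation means multiplying $V$ on the right by a unimodular matrix, \ie performing integer column operations on $V$: swapping two columns, negating a column, or adding an integer multiple of one column to another.

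\textbf{Existence.} I run the standard column-style Hermite reduction row by row.
\begin{itemize}
\item In row $1$, use the Euclidean algorithm on the entries via column operations to collect their gcd in position $(1,1)$ and zeros in positions $(1,j)$ for $j>1$. Negate the column if needed so the gcd is positive; it is nonzero because $V$ is nonsingular.
\item Proceed inductively. At row $i$, apply the same procedure using only columns $i,\dots,d$. This leaves the entries already fixed in rows $1,\dots,i-1$ unchanged, since those rows vanish in columns $\ge i$. It produces $a_{i,i}>0$ (nonsingularity again) and $a_{i,j}=0$ for $j>i$.
\item Reduce the entries left of the diagonal. Adding integer multiples of column $i$ to column $j<i$ changes row $i$ by $a_{i,j}\mapsto a_{i,j}+m\,a_{i,i}$, so we can reach $0\le a_{i,j}<a_{i,i}$. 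It leaves rows $<i$ untouched, since column $i$ is zero there.
\item Rows $>i$ may change, but they are processed later. Ordering the reductions so that column $i$ is handled after its rows are fixed gives the normal form.
\end{itemize}

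\textbf{Uniqueness of the matrix for fixed vertex data.} The rows of $H$ generate the same lattice $L=\Z^d V$ as the rows of $V$, and right multiplication by $U$ maps lattices accordingly. Suppose two matrices $H,H'$ in normal form satisfy $H'=HU$ with $U$ unimodular. Then $U=H^{-1}H'$ is lower triangular. Its diagonal entries $a'_{ii}/a_{ii}$ are positive and their product is $\pm1$. Comparing row $i$ and proceeding by induction on $i$ forces each diagonal ratio to be $1$. It also forces the off-diagonal entries of $U$ to vanish, because of the bounds $0\le a_{i,j}<a_{i,i}$. Hence $U=I$.

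\textbf{Main obstacle.} The real subtlety is that $H$ should be unique for the simplex itself, but the construction above depends on which vertex is placed at the origin and on how the remaining vertices are ordered. Uniqueness as stated therefore holds after fixing this labelling, or else one declares a canonical choice among the finitely many orderings, for example the lexicographically smallest resulting matrix. I would handle this by stating the uniqueness for ordered vertex sets and noting that a canonical minimum then makes $H$ an invariant of $\polytope{S}$. This matches the cited reference \cite{MR874114}.
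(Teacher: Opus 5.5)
Your proof is correct. It is the standard argument: Euclidean-algorithm column reduction row by row for existence, and for uniqueness the observation that $U=H^{-1}H'$ is integral and lower triangular with positive diagonal, hence unitriangular, after which the bounds $0\le a_{i,j}<a_{i,i}$ force $U=I$. The paper gives no proof of its own and simply cites \cite[Section~4.1]{MR874114}, where the Hermite normal form is established in exactly this way.

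Your caveat about uniqueness is also well founded. Uniqueness only holds once the labelling of the vertices is fixed: $\conv\{(0,0),(1,0),(1,2)\}$ and $\conv\{(0,0),(1,0),(0,2)\}$ are both in Hermite normal form, and the map $(x,y)\mapsto(1-x,y)$ carries one onto the other. So the paper's ``uniquely determined'' should be read for ordered vertices, or via a canonical choice as you suggest.
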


We refer $\polytope{S}_H$ as the \defn{Hermite normal form simplex} associated with the matrix $H$. 
We denote by $A$ the matrix obtained by appending a leading column of ones to $H$, which corresponds to embedding $\polytope{S}_H$ at height one in a space of one  dimension higher.
We recall the following definition and notation from \cite{Bajo2025LocalF}.

\begin{definition}\label{def:1-row}
We say that a simplex $\polytope{S}$ is in \defn{one-row Hermite normal form} 
if its Hermite normal form matrix $H$ has the property that $a_{i,i} = 1$ for all 
$1 \leq i \leq d - 1$. In other words, all but the last diagonal entry of the extended matrix $A$  are equal to 1, 
and only the bottom-right entry $a_{d,d} = N$ is allowed to be larger.
In this case, we refer to such a simplex using only the last row 
$\b a = (a_1, \ldots, a_{d-1}, N)$, where $a_i \coloneqq a_{d,i}$ and $a_d \coloneqq N$. We use $\polytope{S}_{\b a}$ to denote the Hermite normal form simplex associated to the row $\b a$. \end{definition}

\begin{example}\label{ex:Matrix_H_A}
For the sequence $\b a = (2,3,4,4,5)$, the Hermite normal form $H$ and its extended form $A$ are as follows:
\begin{center}
\begin{tabular}{ c c c c c }
 $H = \begin{pmatrix}
0 & 0 & 0 & 0 & 0\\
1 & 0 & 0 & 0 & 0\\
0 & 1 & 0 & 0 & 0\\
0 & 0 & 1 & 0 & 0\\
0 & 0 & 0 & 1 & 0\\
2 & 3 & 4 & 4 & 5
\end{pmatrix}$ &  &  & &  $A = \begin{pmatrix}
1 & 0 & 0 & 0 & 0 & 0\\
1 & 1 & 0 & 0 & 0 & 0\\
1 & 0 & 1 & 0 & 0 & 0\\
1 & 0 & 0 & 1 & 0 & 0\\
1 & 0 & 0 & 0 & 1 & 0\\
1 & 2 & 3 & 4 & 4 & 5
\end{pmatrix}$ 
\end{tabular}   
\end{center}
\end{example}

\subsection{Ehrhart theory and unimodular triangulations}

The \defn{Ehrhart polynomial} of a lattice $d$-polytope $\polytope{P}\subset \R^d$ is defined by $\mathdefn{\mathcal{L}_{\polytope{P}}(m)} \coloneqq \lvert\, m\polytope{P} \cap \mathbb{Z}^d \,\rvert$, where $\mathdefn{m\polytope{P}} \coloneqq \{\, m\b p ~:~ \b p \in \polytope{P} \,\}$ denotes the $\mathdefn{m\textsuperscript{th}}$ \defn{dilation} of $\polytope{P}$ for $m\in\Z_{\geq 0}$.
By Ehrhart's theorem (see \cite{Ehrhart_poly}), $\mathcal{L}_{\polytope{P}}(m)$ agrees with a polynomial in $m$ of degree $d$.
The corresponding \defn{Ehrhart series} is the rational function
$$\mathdefn{\operatorname{Ehr}_{\polytope{P}}(t)} \coloneqq \sum_{m\geq 0} \mathcal{L}_{\polytope{P}}(m) t^m =\frac{h^{\ast}_0 + h^{\ast}_1 t + \cdots + h^{\ast}_d t^d}{(1-t)^{d+1}}, $$
where $\mathdefn{h^{\ast}_{\polytope{P}}(t)}=h^{\ast}_0 + h^{\ast}_1 t + \cdots + h^{\ast}_d t^d$ and   $h^\ast(P)\coloneqq (h^{\ast}_0,\ldots,h^{\ast}_d)$ are called the \defn{$h^{\ast}$-polynomial}  and   the \defn{$h^{\ast}$-vector} of $\polytope{P}$, respectively. 
Stanley's non-negativity theorem (see \cite{stanley1980decompositions}) states that $h^{\ast}_i\in \N_{\geq 0}$ for all $0 \leq i \leq d$ which motivates  studying combinatorial properties of $h^\ast(P)$. 
In particular, the \defn{unimodality}\footnote{A sequence $(l_0,\ldots,l_d)$ is \defn{unimodal} if there exists an index $0 \leq i \leq d$ such that $l_0 \leq \cdots \leq l_i \geq \cdots \geq l_d$.} of the coefficient sequences of $h^{\ast}_{\polytope{P}}(t)$ and $\mathcal{L}_{\polytope{P}}(t)$ has received considerable attention (see  \cite{Bajo2025LocalF, Hibi_Hermite, Solus_Fu_Unimodal}).

The $h^{\ast}$-vector can also be computed via triangulations.
A \defn{(lattice) triangulation} of a $d$-dimensional lattice polytope $\polytope{P}\subset \R^d$ is a collection $\mathcal{T}$ of $d$-dimensional lattice simplices such that 
\begin{enumerate}
    \item the intersection of any two simplices in $\mathcal{T}$ is a common face of both, and
    \item the union of all simplices in $\mathcal{T}$ equals $\polytope{P}$.
\end{enumerate}
A triangulation $\mathcal{T}$ of a polytope $\polytope{P}$ is \defn{regular} if there exists a function $\omega : \mathrm{Vert}(\mathcal{T}) \to \R$ such that $\mathcal{T}$ is obtained as the projection of the lower faces of the convex hull of $\{(\b v,\omega(\b v)) : \b v\in \mathrm{Vert}(\mathcal{T})\}\subset\R^{d+1}$, where $\mathrm{Vert}(\mathcal{T})$ denotes the set of vertices of any simplex in $\mathcal{T}$. The triangulation $\mathcal{T}$ is \defn{flag} if all its minimal non-faces have size $2$, and it is called \defn{unimodular} if every simplex in $\mathcal{T}$ has normalized volume equal to $1$.

We will repeatedly use the following elementary volume criterion.

\begin{lemma}\label{lem:volume_count_unimodular}
Let $\mathcal T$ be a lattice triangulation of a $d$-dimensional lattice
polytope $\polytope{P}$.  If the number of $d$-simplices of $\mathcal T$ is equal to the normalized volume 
$\nVol(\polytope{P})$ of $\polytope{P}$, then $\mathcal T$ is unimodular.
\end{lemma}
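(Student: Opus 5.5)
The plan is to compare volumes: the normalized volume of $\polytope{P}$ is the sum of the normalized volumes of the maximal simplices of $\mathcal T$, and each of these is a positive integer. Then count.

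\emph{Step 1: additivity.} The maximal simplices $\Delta_1,\dots,\Delta_r$ of $\mathcal T$ cover $\polytope{P}$. Any two of them meet in a common proper face, because they are distinct $d$-simplices. A proper face lies in a hyperplane and so has Lebesgue measure zero. Hence Euclidean volume is additive over the triangulation:
\[
\operatorname{vol}(\polytope{P})=\sum_{i=1}^r \operatorname{vol}(\Delta_i).
\]
Normalized volume is $d!$ times Euclidean volume, so
\[
\nVol(\polytope{P})=\sum_{i=1}^r\nVol(\Delta_i).
\]

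\emph{Step 2: integrality and positivity.} Write $\Delta_i=\conv(\b v_0,\dots,\b v_d)$ with $\b v_j\in\Z^d$. Then
\[
\nVol(\Delta_i)=\lvert\det(\b v_1-\b v_0,\dots,\b v_d-\b v_0)\rvert .
\]
This is the absolute value of the determinant of an integer matrix, so it is an integer. It is nonzero because the vertices are affinely independent. Hence $\nVol(\Delta_i)\geq 1$ for every $i$.

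\emph{Step 3: conclude.} Combining the two steps gives
\[
r=\nVol(\polytope{P})=\sum_{i=1}^r\nVol(\Delta_i)\geq r .
\]
Equality holds throughout, and each summand is at least $1$, so $\nVol(\Delta_i)=1$ for all $i$. Thus $\mathcal T$ is unimodular.

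The argument is elementary, and the only step needing care is the additivity in Step 1. It relies on the intersection condition in the definition of a triangulation, which guarantees that overlaps have measure zero.
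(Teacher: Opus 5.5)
Your proof is correct and follows the same argument as the paper: the normalized volumes of the maximal simplices are positive integers summing to $\nVol(\polytope{P})$, so if their number equals this sum, each must equal $1$. You also justify two facts the paper uses without comment, namely additivity of volume (overlaps lie in proper faces and have measure zero) and integrality (each normalized volume is the absolute value of an integer determinant).
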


\begin{proof}
The normalized volume of each maximal lattice simplex is a positive integer,
and the sum of these volumes is $\nVol(\polytope{P})$.  If their number already
equals this sum, every summand must be equal to $1$.
\end{proof}

We say that $\polytope{P}$ admits the \defn{integer decomposition property }(IDP, for short) if for every positive integer $m$ and every $\b p \in m\polytope{P} \cap \Z^d$, there exist lattice points $\b p_1, \ldots, \b p_m \in \polytope{P} \cap \Z^d$ such that $ \b p = \b p_1 + \cdots + \b p_m.$

The \defn{Ehrhart ring} of $\polytope{P}$ is the graded $\mathbb K$-algebra (where $\mathbb K$ is a field)
$$
\mathdefn{\mathbb K[\polytope{P}]}
\coloneqq
\bigoplus_{m\geq 0}\operatorname{span}_{\mathbb K}
\{\b x^{\b a}z^m:\b a\in m\polytope{P}\cap\Z^d\}.
$$

It is easy to see (and well known) that ist Hilbert series equals the Ehrhart series of $\polytope{P}$.
On the other hand, let $\mathdefn{A_{\polytope{P}}}$ denote the graded $\mathbb K$-algebra generated by the monomials $\b x^{\b a}z$ with $\b a\in\polytope{P}\cap\Z^d$. Then, obviously, $\polytope{P}$ is IDP if and only if   $A_{\polytope{P}}=\mathbb K[\polytope{P}]$ (see e.g., \cite[Section 2.C]{Bruns}), which can be tested by comparing the Hilbert series. 

Furthermore, if a lattice polytope $\polytope{P}$ admits a unimodular triangulation, then $\polytope{P}$ has the IDP (see \cite[Sec.~1.2.5]{Unimodular_triangulation_positive_result}).
Therefore, the study of unimodular triangulations and the IDP provides a natural bridge between discrete geometry and combinatorial commutative algebra.

Moreover, we now recall that the $h^{\ast}$-vector admits a combinatorial interpretation in terms of a \emph{shelling order} of a unimodular triangulation of a lattice polytope $\polytope{P}$.

\begin{definition}\label{def:shelling_order}
Let $\mathcal{T}$ be a triangulation of a lattice polytope $\polytope{P}$.
An ordering $\mathcal{T}_1, \ldots, \mathcal{T}_s$ of the simplices in $\mathcal{T}$ is called a \defn{shelling order} if for every $1 < r \leq s$, the intersection
$$
\bigcup_{i = 1}^{r-1} (\mathcal{T}_i \cap \mathcal{T}_r)
$$
is a union of facets of $\mathcal{S}_r$.
\end{definition}

The relevance of shelling orders for our purposes is given by the following result.

\begin{theorem}[\cite{stanley1980decompositions}]\label{thm:Shelling_order}
Let $\polytope{P}$ be a $d$-dimensional lattice polytope and let $\mathcal{T} = \{\mathcal{T}_1, \ldots, \mathcal{T}_s\}$ be a unimodular triangulation of $\polytope{P}$.
If $\mathcal{T}_1, \ldots, \mathcal{T}_s$ is a shelling order, then
$$
h^{\ast}_{\polytope{P}}(t) = \sum_{i = 1}^{s} t^{\omega_i},
$$
where $\omega_i = \# \left\{ k < i ~:~ \mathcal{T}_k \cap \mathcal{T}_i \text{ is a facet of } \mathcal{T}_i \right\}$.
\end{theorem}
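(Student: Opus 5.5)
We prove \Cref{thm:Shelling_order} by decomposing $\polytope{P}$ into pairwise disjoint \emph{half-open} unimodular simplices and computing the Ehrhart series of each piece. For each $i$, let $\mathcal{F}_i$ be the set of facets $G$ of $\mathcal{T}_i$ for which $G=\mathcal{T}_k\cap\mathcal{T}_i$ for some $k<i$. Define the half-open simplex
$$\mathcal{H}_i\coloneqq \mathcal{T}_i\setminus \bigcup_{G\in\mathcal{F}_i}G.$$

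\textbf{Counting facets.} First we check that $|\mathcal{F}_i|=\omega_i$. In a triangulation, a facet of a maximal simplex lies in at most two maximal simplices, since its relative interior has a neighbourhood meeting at most two of them. Hence distinct indices $k$ in the definition of $\omega_i$ give distinct facets, and the map $k\mapsto \mathcal{T}_k\cap\mathcal{T}_i$ is a bijection onto $\mathcal{F}_i$.

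\textbf{Disjoint decomposition.} Next we show $\polytope{P}=\bigsqcup_i \mathcal{H}_i$. Each $\b p\in\polytope{P}$ lies in the relative interior of a unique face $F$ of $\mathcal{T}$; let $i$ be the smallest index with $F\subseteq\mathcal{T}_i$.
\begin{itemize}
\item $\b p\in\mathcal{H}_i$: if $\b p$ lay in some $G\in\mathcal{F}_i$, then $F\subseteq G$, because $G$ is a face of $\mathcal{T}_i$ meeting $\relint F$. Since $G\subseteq \mathcal{T}_k$ for some $k<i$, this contradicts the minimality of $i$.
\item $\b p\notin\mathcal{H}_j$ for $j\neq i$: if $j<i$ and $\b p\in\mathcal{T}_j$, then $F\subseteq \mathcal{T}_j$ (again $\mathcal{T}_j\cap\mathcal{T}_i$ is a face meeting $\relint F$), contradicting minimality. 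If $j>i$, then $F\subseteq \mathcal{T}_i\cap\mathcal{T}_j$. The shelling condition says $\bigcup_{k<j}(\mathcal{T}_k\cap\mathcal{T}_j)$ is a union of facets of $\mathcal{T}_j$. Each such facet is a face $\mathcal{T}_k\cap\mathcal{T}_j$ for some $k<j$, hence lies in $\mathcal{F}_j$. So $\b p$ lies in a member of $\mathcal{F}_j$ and $\b p\notin\mathcal{H}_j$.
\end{itemize}
The same argument applies to every dilation $m\polytope{P}$. Therefore $\operatorname{Ehr}_{\polytope{P}}(t)=\sum_i \operatorname{Ehr}_{\mathcal{H}_i}(t)$, where the Ehrhart series of a half-open simplex counts lattice points in its dilates.

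\textbf{Half-open unimodular simplices.} It remains to show $\operatorname{Ehr}_{\mathcal{H}_i}(t)=t^{\omega_i}/(1-t)^{d+1}$. Lift $\mathcal{T}_i$ to height one with vertices $(1,\b v_0),\ldots,(1,\b v_d)$. By unimodularity these vectors form a $\Z$-basis of $\Z^{d+1}$. Hence the lattice points of the cone over $m\mathcal{T}_i$ at height $m$ are exactly $\sum_j c_j(1,\b v_j)$ with $c_j\in\Z_{\geq0}$ and $\sum_j c_j=m$. Removing the facet opposite $\b v_j$ amounts to requiring $c_j\geq 1$, and $\mathcal{F}_i$ consists of $\omega_i$ such facets. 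The generating function is therefore
$$\prod_{j\,\text{removed}}\frac{t}{1-t}\prod_{j\,\text{kept}}\frac{1}{1-t}=\frac{t^{\omega_i}}{(1-t)^{d+1}}.$$
Summing over $i$ and comparing numerators with the definition of $h^\ast_{\polytope{P}}(t)$ yields $h^\ast_{\polytope{P}}(t)=\sum_{i=1}^{s} t^{\omega_i}$.

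The main obstacle is the disjointness bookkeeping in the second step. In particular, it requires using the shelling condition precisely to ensure that every point of $\mathcal{T}_j\cap\mathcal{T}_i$ with $i<j$ lies in a removed facet of $\mathcal{T}_j$.
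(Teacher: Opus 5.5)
The paper gives no proof of this statement; it quotes it from Stanley. Your half-open decomposition is the standard route to it. The facet count, the disjointness argument for $m\polytope{P}$ with $m\geq 1$, and the lattice-point count via the $\Z$-basis $(1,\b v_0),\ldots,(1,\b v_d)$ are all sound. One step deserves an extra line: you need that a facet $G$ of $\mathcal{T}_j$ contained in $\bigcup_{k<j}(\mathcal{T}_k\cap\mathcal{T}_j)$ actually belongs to $\mathcal{F}_j$. A relative-interior point of $G$ lies in some $\mathcal{T}_k\cap\mathcal{T}_j$. That set is a face of $\mathcal{T}_j$, so it contains $G$, and it is a proper face, so it equals $G$.

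The genuine gap is in degree $0$. Your sentence ``the same argument applies to every dilation $m\polytope{P}$'' holds only for $m\geq 1$. For $m=0$, every set $0\cdot\mathcal{H}_i$ equals $\{\b 0\}$, so these pieces are not disjoint. Your series $t^{\omega_i}/(1-t)^{d+1}$, read correctly in the cone over $\mathcal{T}_i$, assigns the apex to $\mathcal{H}_i$ only when $\omega_i=0$. So what you have shown is that $\operatorname{Ehr}_{\polytope{P}}(t)$ and $\sum_i t^{\omega_i}/(1-t)^{d+1}$ agree in every degree $m\geq 1$. Agreement in degree $0$ is the claim $\#\{i:\omega_i=0\}=1$, which you never prove.

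This is not cosmetic. Your argument never uses that $\bigcup_{k<r}(\mathcal{T}_k\cap\mathcal{T}_r)$ is \emph{nonempty} for $r\geq 2$ (and \Cref{def:shelling_order} does not state this explicitly). Without nonemptiness the statement is false. Take $\polytope{P}=[0,3]$ with the order $[0,1],[2,3],[1,2]$. At step $2$ the intersection is the empty union of facets, and at step $3$ it is both facets of $[1,2]$. This gives $\sum_i t^{\omega_i}=2+t^2$, while $h^\ast_{\polytope{P}}(t)=1+2t$. Your decomposition is valid in this example for all $m\geq 1$ and breaks exactly at $m=0$.

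The repair has two parts. First, use the standard definition of shelling: for $r\geq 2$ the intersection is a \emph{nonempty} union of facets. Second, work in the cone over $\polytope{P}$, where the apex lies in the half-open cone over $\mathcal{T}_i$ if and only if $\mathcal{F}_i=\emptyset$. This holds for $i=1$. For $r\geq 2$, nonemptiness supplies a facet of $\mathcal{T}_r$ in the union, which lies in $\mathcal{F}_r$ by the remark above, so $\omega_r\geq 1$. Hence exactly one $\omega_i$ vanishes, matching $\mathcal{L}_{\polytope{P}}(0)=1$, and the two series agree in all degrees.
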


\subsection{The local $h^\ast$-polynomial}

Let $\polytope{S}$ a simplex with associated Hermite normal form $H$ and extended matrix $A$ (see \Cref{sub:Hermite_normal_form}). Let $\mathdefn{\Lambda}=\mathdefn{\Lambda(\polytope{S}_{\b a})}\coloneqq \Z^{d+1}A^{-1}$ and $\mathdefn{\Gamma}=\mathdefn{\Gamma(\polytope{S})}\coloneqq \Lambda/\Z^{d+1}$. 
We refer to $\Gamma$ as the \defn{parallelepiped group} associated with $\polytope{S}$.
For $\b x=(x_0,\dots,x_d)\in\Lambda$, let $\mathdefn{\operatorname{frac}(x_i)}\coloneqq x_i-\lfloor x_i\rfloor$ for each $0\leq i\leq d$, and define
$$
\mathdefn{\operatorname{frac}(\b x)}\coloneqq (\operatorname{frac}(x_0),\dots,\operatorname{frac}(x_d))
\qquad\text{and}\qquad
\mathdefn{\operatorname{age}(\b x)}\coloneqq \sum_{i=0}^d \operatorname{frac}(x_i).
$$
Since $\operatorname{age}(\b x)$ depends only on the class of $\b x$ modulo $\Z^{d+1}$, it induces a well-defined function on $\Gamma$.
Moreover, each class in $\Gamma$ admits a unique representative in $[0,1)^{d+1}$, obtained by taking coordinatewise fractional parts. Thus, we identify $\Gamma$ with this set of representatives.

The \defn{local $h^{\ast}$-polynomial} (also called \defn{box polynomial}) of $\polytope{S}$ is given by
\begin{equation}\label{eq:local_h_S}
\mathdefn{\ell^{\ast}_{\polytope{S}_{\b a}}(t)}=\sum_{\b x\in \Gamma\cap(0,1)^{d+1}} t^{\operatorname{age}(\b x)}.
\end{equation}
If $\polytope{S}$ is a simplex in one-row Hermite normal form, \ie $\polytope{S}=\polytope{S_{\ ba}}$ for some Let $\b a\in \mathbb{N}^d$, then  all elements of $\Gamma$ can be written as $\mathdefn{\b x_m}=\operatorname{frac}(m\overline{\b a})$ for $m=0,1,\dots,N-1$, where $\overline{\b a}$ denotes the last row of $A^{-1}$ (see Section 2.2. in \cite{Bajo2025LocalF} for details).

\subsection{Collinear cone triangulations}

\begin{definition}\cite[Definition 2.6]{s_lecture_reference}
Let $\polytope{P}$ be a lattice polytope of dimension $d$ and let $\b x \in \Z^d$ such that $\b x \notin \polytope{P}$.
Define the \defn{one-point extension} of $\polytope{P}$ by $\b x$ as the polytope $\conv(\polytope{P} \cup \{\b x\})$. If $\b x \notin \aff(\polytope{P})$, then we call $\mathdefn{ \Pyr(\polytope{P}, \b x)} \coloneqq\conv(\polytope{P} \cup \{\b x\})$  the \defn{pyramid over $\polytope{P}$ with apex $\b x$}. 
Moreover, we say that a face $\polytope{F}$ of $\polytope{P}$ is \defn{visible from $\b x$} if $\conv(\polytope{F} \cup  \{\b x\}) \cap \polytope{P} = \polytope{F}$.
\end{definition}
The next lemma allows us to build triangulations for successive one-point extensions. 

\begin{lemma}\label{lem:Iterated_one_point_extension}
Let $\Delta_0 \subset \R^d$ be a lattice $d$-polytope and let $\mathcal{T}_0$ be a triangulation of $\Delta_0$.
Let $\b x_1,\dots,\b x_k \in \Z^d$ be lattice points such that $\b x_i \notin \Delta_{i-1}$ for each $1 \leq i \leq k$, where
$\Delta_i \coloneqq \conv\big(\Delta_{i-1} \cup \{\b x_i\}\big)$.
For $1\leq i\leq k$, let $\mathcal{F}_{\b x_i}$ be the set of facets of $\Delta_{i-1}$ visible from $\b x_i$ and set
$$
\mathcal{T}_i \coloneqq \mathcal{T}_{i-1} \cup \big\{\conv(S \cup \{\b x_i\}) : S \in \mathcal{T}_{i-1}\vert_{\mathcal{F}_{\b x_i}}\big\},
$$
where $\mathcal{T}_{i-1}\vert_{\mathcal{F}_{\b x_i}}$
denotes the triangulation induced by $\mathcal{T}_{i-1}$ on
$\bigcup_{\polytope{F}\in\mathcal{F}_{\b x_i}} \polytope{F}$.

Then $\mathcal{T}_i$ is a triangulation of $\Delta_i$ for every $1 \leq i \leq k$.
\end{lemma}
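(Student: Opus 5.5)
By induction on $i$, it suffices to treat a single step. So fix $i$ and assume that $\mathcal{T}_{i-1}$ is a triangulation of the $d$-polytope $\Delta_{i-1}$. Write $\b x=\b x_i$, $\Delta=\Delta_{i-1}$ and $\Delta'=\Delta_i$. We must show that the collection $\mathcal{T}_i$ is a triangulation of $\Delta'$. Two things need checking: the simplices cover $\Delta'$, and any two of them meet in a common face.

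First I would check that the new cells are genuine $d$-simplices. Each $S\in\mathcal{T}_{i-1}\vert_{\mathcal{F}_{\b x}}$ is a $(d-1)$-simplex contained in a visible facet $\polytope{F}$. Since $\b x\notin\Delta$ and $\polytope{F}$ is visible, $\b x$ lies strictly beyond the supporting hyperplane $H_{\polytope{F}}$, meaning on the side opposite $\Delta$. Hence $\conv(S\cup\{\b x\})$ is full-dimensional.

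Next, coverage. I would use the standard beneath–beyond decomposition
$$\Delta'=\Delta\cup\bigcup_{\polytope{F}\in\mathcal{F}_{\b x}}\conv(\polytope{F}\cup\{\b x\}).$$
To prove it, take $\b y\in\Delta'\setminus\Delta$. Write $\b y=(1-\lambda)\b p+\lambda\b x$ with $\b p\in\Delta$, and follow the segment from $\b x$ to $\b y$ back toward $\Delta$. Let $\b q$ be the first point of this segment in $\Delta$; it lies on the boundary of $\Delta$. Choose a facet $\polytope{F}\ni\b q$ whose hyperplane separates $\b x$ from $\Delta$. Such a facet exists because the open segment $(\b x,\b q)$ avoids $\Delta$. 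This facet is visible from $\b x$, and $\b y\in\conv(\polytope{F}\cup\{\b x\})$. Finally, $\conv(\polytope{F}\cup\{\b x\})$ is covered by the cones $\conv(S\cup\{\b x\})$ with $S$ ranging over the simplices of the induced triangulation of $\polytope{F}$.

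Then intersections, which I expect to be the main obstacle. There are three cases to check.

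Two old simplices: their intersection is a common face by the induction hypothesis.

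Two new simplices $\conv(S\cup\{\b x\})$ and $\conv(S'\cup\{\b x\})$: radial projection from $\b x$ onto the boundary of $\Delta$ is injective on the visible region, since each ray from $\b x$ enters $\Delta$ at most once. Hence the intersection equals $\conv((S\cap S')\cup\{\b x\})$. Because $S\cap S'$ is a common face in $\mathcal{T}_{i-1}$, this is a common face.

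An old simplex $T$ and a new simplex $C=\conv(S\cup\{\b x\})$: here I would show $T\cap C=T\cap S$. Every point of $C\setminus S$ lies strictly beyond $H_{\polytope{F}}$, while $T\subseteq\Delta$ lies weakly beneath it. Hence $T\cap C\subseteq H_{\polytope{F}}\cap C=S$, so $T\cap C=T\cap S$. This last set is a face of both $T$ and $S$, hence a face of $C$.

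The delicate point is in the second case. The visible facets $\polytope{F}$ and $\polytope{F}'$ may differ, so one must use that the union $\bigcup\mathcal{F}_{\b x}$ is exactly the set of points of the boundary of $\Delta$ reached first from $\b x$. I would prove this via the separating-hyperplane characterization of visibility: $\polytope{F}$ is visible if and only if $\b x$ lies beyond $H_{\polytope{F}}$. Consequently every point of $\conv(\polytope{F}\cup\{\b x\})$, other than points of $\polytope{F}$, lies outside $\Delta$.
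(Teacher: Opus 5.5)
Your argument is correct, and it takes a different route from the paper. The paper uses the same induction on $i$, but handles the single step entirely by citing \cite[Theorem~2.7]{s_lecture_reference}. You instead prove that step from scratch: you use the beneath--beyond decomposition $\Delta_i=\Delta_{i-1}\cup\bigcup_{\polytope{F}\in\mathcal{F}_{\b x_i}}\conv(\polytope{F}\cup\{\b x_i\})$ and then check intersections in three cases. Everything rests on one fact: for $\b s$ in a visible facet, the half-open segment $[\b x_i,\b s)$ misses $\Delta_{i-1}$. This fact gives the injectivity of radial projection in the new--new case and the identity $T\cap C=T\cap S$ in the old--new case. Your existence argument for a separating facet through the first hitting point $\b q$ is also sound: if no facet through $\b q$ had $\b x_i$ strictly beyond it, points of $(\b q,\b x_i)$ near $\b q$ would lie in $\Delta_{i-1}$.

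The citation buys brevity. The same source also supplies the preservation of regularity and flagness that the paper needs in \Cref{remark:flag_regular}, which your argument does not address. Your version is self-contained and shows exactly which notion of visibility is required.

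On that last point, be explicit. Your equivalence ``$\polytope{F}$ is visible iff $\b x$ lies strictly beyond $H_{\polytope{F}}$'' does not hold for the paper's literal definition $\conv(\polytope{F}\cup\{\b x\})\cap\Delta=\polytope{F}$. If $\b x\in\aff(\polytope{F})\setminus\polytope{F}$, then $\conv(\polytope{F}\cup\{\b x\})\subseteq H_{\polytope{F}}$ meets $\Delta$ only in $\polytope{F}$. So $\polytope{F}$ counts as visible, and the recipe produces degenerate cells. For example, take $\Delta_0=[0,1]^2$ and $\b x_1=(2,0)$: the bottom edge would be coned to the segment $[0,2]\times\{0\}$, which is not a $2$-simplex.

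So the lemma needs the strict notion you use (or a genericity assumption), and you should state that explicitly. This is harmless for the paper. In each application the points are added along a line meeting the base facet in its relative interior, so no new point lies in the affine hull of a facet of $\Delta_{i-1}$.
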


\begin{proof}
We argue by induction on $i$. The base case $i=0$ holds by assumption.
Assume that $\mathcal{T}_{i-1}$ is a triangulation of $\Delta_{i-1}$ for some $1 \leq i \leq k$.
Since $\b x_i \notin \Delta_{i-1}$, the polytope $\Delta_i=\conv(\Delta_{i-1}\cup\{\b x_i\})$ is the one-point extension of $\Delta_{i-1}$ by $\b x_i$. 
Applying \cite[Theorem~2.7]{s_lecture_reference} to $\Delta_{i-1}$, $\mathcal{T}_{i-1}$, and $\b x_i$, we obtain that the complex $\mathcal{T}_i$ defined above is a triangulation of $\Delta_i$.
This completes the induction and the proof.
\end{proof}

\begin{remark}\label{remark:flag_regular}
By \cite[Theorem~2.7]{s_lecture_reference}, if $\mathcal{T}_0$ is regular (respectively flag), then $\mathcal{T}_k$ is also regular (respectively flag).
\end{remark}

\begin{definition}\label{def:colinear_cone_triangulation}
Let $\polytope{P}\subset\R^d$ be a $d$-dimensional lattice simplex with vertices $\mathcal{V}=\{\b v_0,\dots,\b v_d\}$ and let  $\b v\in\mathcal{V}$ be fixed. 
Define $\polytope{P}_0\coloneqq\conv(\mathcal{V}\setminus\{\b v\})$. 
Suppose that $\polytope{P}\cap\Z^d=(\polytope{P}_0\cap\Z^d)\sqcup\{\b v\}\sqcup \mathcal{I}$, where $\mathcal{I}\subset\polytope{P}$ is a finite set of lattice points that, together with $\b v$, lie on a common line $L$ and are different from the vertices of $\polytope P$.
Given a triangulation $\mathcal{T}$ of $\polytope{P}_0$, the \defn{collinear cone triangulation} of $\polytope{P}$ is the triangulation obtained from $(\polytope{P}_0,\mathcal{T})$ by iteratively applying \Cref{lem:Iterated_one_point_extension}, where one starts with the point of $\mathcal{I}$ closest to $\polytope{P}_0$, goes along $L$ and ends with the vertex $\b v$.
\end{definition}

An example of a collinear cone triangulation is shown in \Cref{fig:Collinear_cone_example}, where $\polytope{P} = \conv(\b v_0, \b v_1,\b v_2)$ and $\polytope{P}_0 = \conv(\b v_0, \b v_1)$.

\begin{figure}[ht]
\centering
\tdplotsetmaincoords{70}{70} 
\begin{tikzpicture}[tdplot_main_coords, scale=1.8]

\definecolor{myblue}{rgb}{0.6,0.8,1}
\definecolor{mygreen}{rgb}{0.65,0.9,0.65}
\definecolor{myred}{rgb}{1,0.7,0.7}
\definecolor{myyellow}{rgb}{1,1,0.6}
\definecolor{mypurple}{rgb}{0.85,0.75,1}

\begin{scope}[xshift=0cm]

\coordinate (v1) at (1, 0, 0);
\coordinate (v2) at (1, 2, 0);
\coordinate (v3) at (1, 1, 5);
\coordinate (p0)  at (1, 1, 0);
\coordinate (p1)  at (1, 1, 1);
\coordinate (p2)  at (1, 1, 2);
\coordinate (p3)  at (1, 1, 3);
\coordinate (p4)  at (1, 1, 4);


\filldraw[fill=mypurple, opacity=0.6] (v2) -- (p4) -- (v3) -- cycle;

\filldraw[fill=myyellow, opacity=0.6] (v2) -- (p3) -- (p4) -- cycle;

\filldraw[fill=myred, opacity=0.6] (v2) -- (p2) -- (p3) -- cycle;

\filldraw[fill=mygreen, opacity=0.6] (v2) -- (p1) -- (p2) -- cycle;

\filldraw[fill=myblue, opacity=0.6] (v2) -- (p1) -- (p0) -- cycle;

\filldraw[fill=myblue, opacity=0.6] (v1) -- (p1) -- (p0) -- cycle;

\filldraw[fill=mygreen, opacity=0.6] (v1) -- (p1) -- (p2) -- cycle;

\filldraw[fill=myred, opacity=0.6] (v1) -- (p2) -- (p3) -- cycle;

\filldraw[fill=myyellow, opacity=0.6] (v1) -- (p3) -- (p4) -- cycle;

\filldraw[fill=mypurple, opacity=0.6] (v1) -- (p4) -- (v3) -- cycle;

\foreach \pt/\name in { v1/$\b v_0$, v2/$\b v_1$, v3/$\b v_2$, p0/$\b p_0$, p1/$\b p_1$, p2/$\b p_2$, p3/$\b p_3$, p4/$\b p_4$} {
  \filldraw[black] (\pt) circle (0.4pt) node[anchor=north east] {\name};
}
\end{scope}
\end{tikzpicture} 
\caption{The collinear cone triangulation of $ \conv(\b v_0, \b v_1, \b v_2)$ along the lattice points $\b p_0, \ldots, \b p_4, \b v_2$.}
\label{fig:Collinear_cone_example}
\end{figure}

\begin{lemma}\label{lem:shelling_collinear_cone}
Let $\polytope{P}\subseteq\R^d$ be a $d$-dimensional lattice simplex with vertices $\mathcal{V}=\{\b v_0,\dots,\b v_d\}$, and let $\b v\in\mathcal{V}$ be fixed
Let $\polytope{P}_0\coloneqq \conv(\mathcal{V}\setminus\{\b v\})$. Assume further that $\polytope{P}_0\cap\Z^d$ consists of its vertices and possibly one interior lattice point. 
If $\polytope{P}$ admits a collinear cone triangulation $\mathcal{T}$ with distinguished vertex $\b v$ and base facet $\polytope{P}_0$, then $\mathcal{T}$ admits a shelling order.
\end{lemma}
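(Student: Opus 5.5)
We first determine the structure of the collinear cone triangulation $\mathcal{T}$ explicitly, and then order its maximal simplices layer by layer along the line $L$. By hypothesis, the lattice points of $\polytope{P}_0$ are its vertices and at most one interior point $\b c$. Consequently the triangulation $\mathcal{T}_0$ of $\polytope{P}_0$ used in \Cref{def:colinear_cone_triangulation} is one of two things. Either it is $\polytope{P}_0$ itself, or it is the stellar subdivision of $\polytope{P}_0$ at $\b c$, whose $(d-1)$-simplices are $\conv((\mathcal{V}\setminus\{\b v,\b w\})\cup\{\b c\})$ for $\b w\in\mathcal{V}\setminus\{\b v\}$.

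The stellar subdivision is the cone from $\b c$ over the boundary of a simplex. Any ordering of its maximal cells is therefore a shelling in which the $j$-th cell meets the earlier ones in exactly $j-1$ facets, namely those containing $\b c$. The trivial triangulation is also trivially shellable.

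Next, write the points of $\mathcal{I}\cup\{\b v\}$ in the order used by the construction: $\b p_1,\dots,\b p_r=\b v$, ordered along $L$ away from $\polytope{P}_0$, with $\b p_0\coloneqq L\cap\polytope{P}_0$. The segment from $\b p_0$ to $\b v$ lies in $\polytope{P}$, and $\b p_0\in\polytope{P}_0$.

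I will show by induction that at step $i$ the facets of $\Delta_{i-1}$ visible from $\b p_i$ are exactly the pyramids $\conv(\polytope{F}\cup\{\b p_{i-1}\})$, where $\polytope{F}$ ranges over the facets of $\polytope{P}_0$ not containing $\b p_0$ in their relative interior. Here $\Delta_0=\polytope{P}_0$, so for $i=1$ the visible facet is $\polytope{P}_0$ itself. The induction uses that $\Delta_{i-1}=\conv(\polytope{P}_0\cup\{\b p_{i-1}\})$ and that $\b p_i$ lies beyond $\b p_{i-1}$ on $L$. The claim then gives the maximal simplices of $\mathcal{T}$ directly.
\begin{itemize}
\item For $i=1$, they are $\conv(\sigma\cup\{\b p_1\})$ for $\sigma\in\mathcal{T}_0$.
\item For $i\ge2$, they are $\conv(\tau\cup\{\b p_{i-1},\b p_i\})$, where $\tau$ runs over the $(d-2)$-faces of $\mathcal{T}_0$ lying on the boundary of $\polytope{P}_0$, i.e.\ on facets of $\polytope{P}_0$ not containing $\b p_0$.
\end{itemize}
If $\b p_0$ lies on a proper face of $\polytope{P}_0$, only those boundary faces of $\polytope{P}_0$ that avoid the minimal face containing $\b p_0$ appear. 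This case distinction must be handled, and it is the main obstacle I expect: pinning down visibility at each step, especially when $\b p_0$ is $\b c$ or lies on the boundary of $\polytope{P}_0$.

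For the shelling order, we take the layer-$1$ simplices in the shelling order of $\mathcal{T}_0$, followed by layer $2$, layer $3$, and so on. Within a layer $i\ge2$, the simplices are indexed by the boundary $(d-2)$-cells $\tau$. These form a shellable complex: it is the boundary of a simplex or a stellar subdivision of one, possibly with one closed star removed. Any shelling of that complex is used, and the induced order is the same in each layer.

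The shelling property must then be checked for a simplex $\conv(\tau\cup\{\b p_{i-1},\b p_i\})$. Its intersection with earlier simplices is the facet $\conv(\tau\cup\{\b p_{i-1}\})$, shared with layer $i-1$, together with the facets $\conv(\tau'\cup\{\b p_{i-1},\b p_i\})$ for the facets $\tau'$ of $\tau$ already used earlier in the same layer. This is a union of facets because the order within the layer is a shelling.

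Within layer $1$, the analogous statement holds since cones preserve shellings. One must also verify that no non-facet intersections with earlier layers occur. This follows because simplices in layer $j<i-1$ meet the current one only inside $\conv(\tau'\cup\{\b p_{j},\dots\})\cap\conv(\tau\cup\{\b p_{i-1},\b p_i\})\subseteq$ the layer-$(i-1)$ facet already counted.
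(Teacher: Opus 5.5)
Your proposal is correct and is essentially the paper's argument: both describe the maximal cells explicitly as the cone from $\b p_1$ over the base triangulation together with the layers $\conv(\polytope{F}\cup\{\b p_{r-1},\b p_r\})$ over facets $\polytope{F}$ of $\polytope{P}_0$, and both order them layer by layer so that each cell meets the earlier ones in one facet from the adjacent layer plus the facets it shares within its own layer. The only differences are that the paper runs the layers from $\b v$ downward when the interior point is used (your upward order works just as well), and that your version also covers a base triangulation ignoring the interior point and a line $L$ meeting $\partial\polytope{P}_0$, which the paper tacitly excludes; the visibility step you flag closes in one line, since if $\phi\geq 0$ is affine on $\Delta_{i-1}$ and vanishes on $\conv(\polytope{F}\cup\{\b p_{i-1}\})$, then $\b p_i=\b p_{i-1}+s(\b p_{i-1}-\b p_0)$ with $s>0$ gives $\phi(\b p_i)=-s\,\phi(\b p_0)$, so the visible facets are exactly those with $\b p_0\notin\polytope{F}$ (as in your bullet list, not merely $\b p_0\notin\relint\polytope{F}$ as in your first formulation).
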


\begin{proof}
After relabeling, we may assume that $\b v=\b v_0$, so that 
$\polytope{P}_0=\conv(\b v_1,\dots,\b v_d)$.
By assumption, there exists a line $L$ that contains all lattice  points of $\polytope{P}\setminus \polytope{P}_0$. We write $\b p_1,\dots,\b p_m$ for these lattice points, where they are numbered according to the order in which they are added in the collinear triangulation. In particular, $\b p_1$ is the  point closest to $\polytope{P}_0$ and $\b p_m=\b v_0$. 
For $1\leq j\leq d$, let $F_j \coloneqq \conv\big(\{\b v_1,\dots,\b v_d\}\setminus\{\b v_j\}\big)$.
Suppose that $\polytope{P}_0$ has a unique interior lattice point $\b p_0$. 
Since the induced triangulation of $\polytope{P}_0$ uses all its lattice points, its maximal simplices are precisely $\conv(p_0,F_1),\ldots,\conv(\b p_0,F_d)$. 
For $1\leq r\leq m$ and $1\leq j\leq d$, define $\sigma_{r,j}:=\conv(\b p_{r-1},\b p_r,F_j)$. 
By construction, the maximal simplices of the collinear cone triangulation $\mathcal T$ are exactly the simplices $\sigma_{r,j}$.
Consider the order
$$
\sigma_{m,1},\dots,\sigma_{m,d},\sigma_{m-1,1},\dots,\sigma_{m-1,d},\dots,\sigma_{1,1},\dots,\sigma_{1,d}.
$$
We show that this is a shelling order. 
For the first layer, if $1\leq j\leq d$, then the only facets of $\sigma_{m,j}$ contained in the union of the preceding simplices are
$$
\sigma_{m,j}\cap \sigma_{m,\ell}=\conv(\b p_{m-1},\b p_m,F_j\cap F_\ell)
\qquad \text{for }1\leq \ell<j.
$$
Now let $1\leq r\leq m-1$.
For each $1\leq j\leq d$, the simplex $\sigma_{r,j}$ has one facet coming from the previous layer, namely $\sigma_{r,j}\cap \sigma_{r+1,j}=\conv(\b p_r,F_j)$, and exactly $j-1$ additional facets coming from the simplices $\sigma_{r,1},\dots,\sigma_{r,j-1}$ in the same layer.
All remaining intersections with preceding simplices have dimension at most $d-2$.
Hence, the above ordering is a shelling order of $\mathcal{T}$.

Now assume that $\polytope{P}_0$ has no interior lattice point. 
Let $\tau\coloneqq \conv(\polytope{P}_0,\b p_1)$, and for $1\leq r\leq m-1$ and $1\leq j\leq d$, define $\sigma_{r,j}\coloneqq \conv(\b p_r,\b p_{r+1},F_j)$.
By construction, the maximal simplices of $\mathcal{T}$ are $\tau$ together with the simplices $\sigma_{r,j}$ for $1\leq r\leq m-1$ and $1\leq j\leq d$.
Consider the order
$$
\tau,\sigma_{1,1},\dots,\sigma_{1,d},\sigma_{2,1}\dots \sigma_{2,d}, \dots , \sigma_{m-1,1},\dots,\sigma_{m-1,d}.
$$
This order starts with the simplex $\tau$.
For $1\leq j\leq d$, the simplex $\sigma_{1,j}$ has one facet contained in $\tau$, namely
$$
\sigma_{1,j}\cap \tau=\conv(\b p_1,F_j),
$$
and exactly $j-1$ additional facets intersecting the simplices $\sigma_{1,1},\dots,\sigma_{1,j-1}$.
Now let $2\leq r\leq m-1$.
For each $1\leq j\leq d$, the simplex $\sigma_{r,j}$ has one facet coming from the preceding layer, namely
$$
\sigma_{r,j}\cap \sigma_{r-1,j}=\conv(\b p_r,F_j),
$$
and exactly $j-1$ additional facets coming from the simplices $\sigma_{r,1},\dots,\sigma_{r,j-1}$ in the same layer.
All remaining intersections with preceding simplices have dimension at most $d-2$.
Hence, the given order is indeed a shelling order of $\mathcal{T}$.
\end{proof}

\begin{theorem}\label{thm:hstar_collinear_cone}
Let $\polytope{P}\subset \mathbb{R}^d$ be a $d$-dimensional lattice simplex admitting a collinear cone triangulation $\mathcal{T}$ with distinguished vertex $\b v$, base facet $\polytope{P}_0$ and line $L$. 
Assume that $\polytope{P}_0$ has no boundary lattice points other than its vertices and at most one interior lattice point. Let $m=\#(L\cap \polytope{P^\circ}\cap \mathbb{Z}^d)$. 
If $\mathcal{T}$ is unimodular, then
$$
h^\ast_{\polytope{P}}(t)=h^\ast_{\polytope{P}_0}(t)+(m-1)(t+t^2+\cdots+t^d).
$$

\end{theorem}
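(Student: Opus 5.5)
Since $\mathcal T$ is unimodular, I will compute $h^\ast_{\polytope{P}}(t)$ by \Cref{thm:Shelling_order}, using the shelling order from the proof of \Cref{lem:shelling_collinear_cone}. For each order I record, for every maximal simplex, the number of facets it shares with earlier simplices, and sum $t^{\omega_i}$. First I fix the notation. Let $\b p_1,\dots,\b p_{m'}$ be the lattice points of $\polytope{P}\setminus\polytope{P}_0$ on $L$, listed in the order they are added, so $\b p_{m'}=\b v$. One needs care here: the statement's $m$ counts the interior lattice points of $\polytope{P}$ on $L$. The collinear cone setup with $\polytope{P}_0$ having no non-vertex boundary points forces $L$ to meet $\polytope{P}_0$ either in its interior point $\b p_0$ or in an interior point of $\polytope{P}_0$. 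In either case the points $\b p_1,\dots,\b p_{m'-1}$ lie in the interior of $\polytope{P}$. Hence $m=m'-1$ in both cases, and I will check this against the figure. Unimodularity of $\mathcal T$ makes every simplex of the triangulation induced on $\polytope{P}_0$ unimodular. Since $\polytope{P}_0$ is a $(d-1)$-simplex, this gives $h^\ast_{\polytope{P}_0}$ explicitly:
\begin{itemize}
\item if $\polytope{P}_0$ has an interior point, the $d$ cones $\conv(\b p_0,F_j)$ shelled in order give $h^\ast_{\polytope{P}_0}(t)=1+t+\dots+t^{d-1}$;
\item otherwise $\polytope{P}_0$ is unimodular and $h^\ast_{\polytope{P}_0}(t)=1$.
\end{itemize}

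In the case without an interior point, the order is $\tau$ followed by the layers $\sigma_{r,1},\dots,\sigma_{r,d}$ for $1\le r\le m'-1$. The simplex $\tau$ contributes $t^0$. By the proof of \Cref{lem:shelling_collinear_cone}, $\sigma_{r,j}$ has exactly $1+(j-1)=j$ earlier facets. Each layer therefore contributes $t+\dots+t^d$, and
$$h^\ast_{\polytope{P}}(t)=1+(m'-1)(t+\dots+t^d)=h^\ast_{\polytope{P}_0}(t)+(m-1)(t+\dots+t^d)+(t+\dots+t^d).$$
This overshoots by one layer unless $m=m'-2$. So the indexing of $m$ is exactly the delicate point, and I will recheck whether $\b p_1$ should count as interior. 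Most likely the intended count is $m'-1$ simplices beyond $\tau$ with $m=m'-1$ points, which gives $m-1$ layers once the layer closest to $\polytope{P}_0$ is treated as part of the base. I will reconcile this by a direct small-dimensional check ($d=2$, as in \Cref{fig:Collinear_cone_example}) before fixing conventions.

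In the case with an interior point $\b p_0$, the order runs from the top layer down. The first layer $\sigma_{m',j}$ has $j-1$ earlier facets and contributes $1+t+\dots+t^{d-1}=h^\ast_{\polytope{P}_0}(t)$. Each subsequent layer $r<m'$ contributes $t+\dots+t^d$, since every simplex gets the extra facet from the layer above. This gives $h^\ast_{\polytope{P}_0}(t)+(m'-1)(t+\dots+t^d)$, and with the right identification of $m$ with the number of non-top layers the formula follows.

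The main obstacle is the bookkeeping: matching $m$ to the number of layers, and verifying via \Cref{lem:shelling_collinear_cone} that no further facet intersections occur. I will settle the bookkeeping by counting normalized volume. By \Cref{lem:volume_count_unimodular}, the number of maximal simplices equals $\nVol(\polytope{P})$, which pins down how many layers there are. I will then cross-check with $h^\ast_{\polytope{P}}(1)=\nVol(\polytope{P})$ and $h^\ast_1=\#(\polytope{P}\cap\Z^d)-d-1$.
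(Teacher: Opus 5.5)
Your approach is the paper's: shell $\mathcal T$ as in \Cref{lem:shelling_collinear_cone}, read off $\omega(\tau)=0$, $\omega(\sigma_{r,j})=j-1$ on the top layer (interior-point case) and $\omega(\sigma_{r,j})=j$ on every other layer, and sum. Your layer counts are right. So is your identification of the statement's $m$ with $m'-1$: the points $\b p_1,\dots,\b p_{m'-1}$ are exactly the interior lattice points of $\polytope{P}$ on $L$, since $\b p_0$ lies on $\partial\polytope{P}$ and $\b p_{m'}=\b v$ is a vertex.

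The gap is that you never close the argument, and the ways you suggest closing it do not work. With $m=m'-1$, your own computation gives $h^\ast_{\polytope{P}}(t)=h^\ast_{\polytope{P}_0}(t)+m\,(t+\cdots+t^d)$ in both cases. ``Treating the layer closest to $\polytope{P}_0$ as part of the base'' is not legitimate. When $\polytope{P}_0$ has no interior point, $h^\ast_{\polytope{P}_0}=1$ is accounted for by $\tau$ alone, and $\sigma_{1,1},\dots,\sigma_{1,d}$ genuinely contribute $t+\cdots+t^d$. In the other case, the number of non-top layers is $m'-1=m$, not $m-1$. The volume check you plan confirms the discrepancy instead of removing it: $\mathcal T$ has $1+d(m'-1)$, resp.\ $dm'$, maximal simplices, while the stated formula at $t=1$ gives $1+d(m'-2)$, resp.\ $d(m'-1)$.

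The reason is that the statement, with $m=\#(L\cap\polytope{P}^\circ\cap\Z^d)$ read literally, is off by one. Take $d=2$ and $\polytope{P}=\conv\{(0,0),(1,0),(2,3)\}$ (the case $N=3=kd+1$, $k=1$), with $\b v=(0,0)$ and $\polytope{P}_0=\conv\{(1,0),(2,3)\}$. Then $h^\ast_{\polytope{P}_0}=1$, and the only interior lattice point is $(1,1)$ on the diagonal $L$. So $m=1$ and the formula predicts $h^\ast_{\polytope{P}}=1$, whereas $\nVol(\polytope{P})=3$ and $h^\ast_{\polytope{P}}(t)=1+t+t^2$.

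The paper's proof silently uses the $m$ of \Cref{lem:shelling_collinear_cone}, i.e.\ the number of lattice points of $L\cap(\polytope{P}\setminus\polytope{P}_0)$ including $\b v$ (your $m'$). Every later application uses that convention: $m=k$ for $N=dk$, $m=k+1$ for $N=dk+1$, and $m=N$ in the two-row case $M=d$. So instead of hedging, you should commit to what your shelling count actually proves,
$h^\ast_{\polytope{P}}(t)=h^\ast_{\polytope{P}_0}(t)+(m'-1)(t+\cdots+t^d)=h^\ast_{\polytope{P}_0}(t)+\#(L\cap\polytope{P}^\circ\cap\Z^d)\,(t+\cdots+t^d)$.
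You should also record that the definition of $m$ in the statement must be corrected accordingly.
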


\begin{proof}
Since $\mathcal{T}$ is unimodular, by \Cref{thm:Shelling_order}, it is enough to sum $t^{\omega(\sigma)}$ over the maximal simplices $\sigma$ of $\mathcal{T}$, where $\omega(\sigma)$ is computed with respect to the shelling order constructed in the proof of \Cref{lem:shelling_collinear_cone}. Let $\b v_1,\ldots,\b v_d$ be the vertices of $\polytope{P}_0$ and for  $1\leq j\leq d$, let $F_j$ be the facet of $\polytope{P}_0$ not containing $v_j$. 
Assume first that $\polytope{P}_0$ has a unique interior lattice point $\b p_0$. 
For $1\leq r\leq m$ and $1\leq j\leq d$, as in the proof of \Cref{lem:shelling_collinear_cone}, set $\sigma_{r,j}\coloneqq \conv(\b p_{r-1},\b p_r,F_j)$. 
The proof of \Cref{lem:shelling_collinear_cone} shows that  $\omega(\sigma_{m,j})=j-1$ for $1\leq j\leq d$, and $\omega(\sigma_{r,j})=j$ for $1\leq r\leq m-1$ and $1\leq j\leq d$. 
Hence 
$$ h^\ast_{\polytope{P}}(t)=\sum_{j=1}^d t^{j-1}+(m-1)\sum_{j=1}^d t^j.$$
Since the induced triangulation of $\polytope{P}_0$ is unimodular, it must use $\b p_0$; hence its maximal simplices are precisely $\conv(\b p_0,F_1),\ldots,\conv(\b p_0,F_d)$. 
With this shelling order, $h^\ast_{\polytope{P}_0}(t)=1+t+\cdots+t^{d-1}$.
Thus
$
h^\ast_{\polytope{P}}(t)=h^\ast_{\polytope{P}_0}(t)+(m-1)(t+t^2+\cdots+t^d).
$

Now assume that $\polytope{P}_0$ has no interior lattice point. 
Let $\tau\coloneqq \conv(\polytope{P}_0,\b p_1)$, and for $1\leq r\leq m-1$ and $1\leq j\leq d$, define $\sigma_{r,j}\coloneqq \conv(\b p_r,\b p_{r+1},F_j)$. 
By the construction in the proof of \Cref{lem:shelling_collinear_cone}, one has $\omega(\tau)=0$ and $\omega(\sigma_{r,j})=j$ for $1\leq r\leq m-1$ and $1\leq j\leq d$. 
Hence $h^\ast_{\polytope{P}}(t) = 1 + (m-1)(t + \dots + t^d)$.
In this case, the induced triangulation is trivial, and therefore $h^\ast_{\polytope{P}_0}(t)=1$. 
Therefore $ h^\ast_{\polytope{P}}(t)=h^\ast_{\polytope{P}_0}(t)+(m-1)(t+t^2+\cdots+t^d).$
This proves the theorem.
\end{proof}

\section{Unimodular triangulations and the integer decomposition property }
\label{sect:IDP}

In this section, we characterize when the one-row Hermite normal form simplex $\polytope{S}_{\b a}$ associated with $\b a = (N - 1, \dots, N - 1, N) \in \N^d$ admits a unimodular triangulation. In particular, we provide the proof of \Cref{thm:A}. 
Throughout this section, we assume that $N>1$, since for $N=1$, the simplex $\polytope{S}_{\b a}$ is unimodular itself and hence trivial.
This simplex was previouly studied in \cite[Section~3.1]{Hibi_Hermite}, where the main objective was to investigate the shifted symmetry property of the $h^{\ast}$-vector. 
Our approach is entirely different and complements the results obtained there.

\subsection{Unimodular triangulation}\label{subsection_UT_1row}
 Let $\b a = (a_1, \dots, a_{d-1}, N)\in \mathbb{N}^d$, where $0\leq a_i < N$ for all $1 \leq i \leq d - 1$. 
Let $\polytope{S}_{\b a}$ be the one-row Hermite normal form simplex associated with $\b a$, \ie 

$$
\polytope{S}_{\b a} = \conv\left\{
\underbrace{(0, \dots, 0)}_{\b v_0},\,
\underbrace{(1, 0, \dots, 0)}_{\b v_1},\,
\dots,\,
\underbrace{(0, \dots, 1, 0)}_{\b v_{d-1}},\,
\underbrace{(a_1, \dots, a_{d-1}, N)}_{\b v_d}
\right\}.
$$

We recall the characterization of \Cref{thm:A} of when $S_{\b a}$ admits a unimodular triangulation 
in the following table:

\begin{center}
\begin{tabular}{|c|c|l|}
\hline
\textbf{Case} & \textbf{Condition on } $N$ & \textbf{IDP and Triangulation} \\
\hline\hline
1 & $N = kd + 1$ & \checkmark IDP, flag regular unimodular triangulation \\
  & (for $k \geq 1$) &  \\
\hline
2 & $N = kd$ & \checkmark IDP, regular unimodular triangulation \\
  & (for $k \geq 1$) &  \\
  & & flag when $d=2$ \\
\hline
3 & $N \notin \{kd, kd+1\}$ & \ding{55} No IDP, no unimodular triangulation \\
  & for any $k \in \mathbb{N}$ &  \\
\hline
\end{tabular}
\end{center}

\noindent
This complete characterization will be proven via \Cref{thm:Unimodular_triangulation}, \Cref{thm:not_IDP_2} and \Cref{thm:Not_IDP_1} below.

\begin{proposition}\label{prop:Interio_points}
Let $\b a = (N - 1, \dots, N - 1,\, N)$, and let $\polytope{S}_{\b a}$ be the one-row Hermite normal form simplex associated with $\b a$.  
If $N = k d$ or $N = k d + 1$ for some positive integer $k$, then $\polytope{S}_{\b a}$ contains exactly its vertices and the points $\b p_i = (i, \dots, i)$ for $1 \leq i \leq k$ as lattice points.
\end{proposition}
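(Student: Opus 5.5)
The plan is to write every point of $\polytope{S}_{\b a}$ in barycentric coordinates with respect to the vertices $\b v_0,\dots,\b v_d$, and then impose integrality coordinate by coordinate. Let $\b x=\sum_{i=0}^d\lambda_i\b v_i$ with $\lambda_i\geq 0$ and $\sum\lambda_i=1$. Since $\b v_i=\b e_i$ for $1\leq i\leq d-1$ and $\b v_d=(N-1,\dots,N-1,N)$, we get
$$x_i=\lambda_i+\lambda_d(N-1)\quad(1\leq i\leq d-1),\qquad x_d=\lambda_d N.$$

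First, $x_d\in\Z$ together with $0\leq\lambda_d\leq 1$ forces $\lambda_d=m/N$ for some integer $0\leq m\leq N$.

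The case $m=N$ gives $\lambda_d=1$, hence $\b x=\b v_d$.

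The case $m=0$ gives $\lambda_i=x_i\in\Z\cap[0,1]$ for $i<d$. With $\sum\lambda_i\leq 1$ this yields exactly $\b v_0,\dots,\b v_{d-1}$.

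The main case is $0<m<N$. For $i<d$ we have
$$\lambda_i=x_i-\frac{m(N-1)}{N}\equiv\frac{m}{N}\pmod 1.$$
Together with $0\leq\lambda_i\leq 1$, and since $1+m/N>1$, this forces $\lambda_i=m/N$ exactly. Consequently $x_i=m$ for every $i$, so $\b x=(m,\dots,m)$.

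It remains to impose nonnegativity of the last coordinate:
$$\lambda_0=1-\frac{(d-1)m}{N}-\frac{m}{N}=1-\frac{dm}{N}\geq 0\iff m\leq\frac{N}{d}.$$
When $N=kd$ or $N=kd+1$ we have $\lfloor N/d\rfloor=k$ (for $d\geq 2$; the case $d=1$ is degenerate, the statement reading as all integers up to $N$, and I would note it separately if needed). So the admissible values are $1\leq m\leq k$.

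Conversely, each $\b p_m=(m,\dots,m)$ with $1\leq m\leq k$ has the explicit barycentric coordinates
$$\lambda_1=\dots=\lambda_d=\frac{m}{N},\qquad\lambda_0=1-\frac{dm}{N}\geq 0,$$
so $\b p_m$ lies in $\polytope{S}_{\b a}$. It is not a vertex: it differs from $\b v_0$ because $m\geq 1$, from $\b v_1,\dots,\b v_{d-1}$ by its coordinates, and from $\b v_d$ because $m\leq k<N$.

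The only delicate step is the congruence argument pinning $\lambda_i$ to $m/N$. Two points need care there: using $0<m<N$ so that $m/N\in(0,1)$, which excludes the other representative $m/N+1$ of the class mod $1$; and checking the boundary values $m=0$ and $m=N$ separately, as done above. Everything else is routine bookkeeping.
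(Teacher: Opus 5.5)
Your proof is correct and follows essentially the paper's route: integrality of the barycentric coordinates forces $\lambda_j=\lambda_d$ for $j<d$, so $\mathbf{x}$ lies on the diagonal, and then $\lambda_0=1-dm/N\geq 0$ bounds the diagonal value by $k$. The only cosmetic difference is in how $\lambda_j=\lambda_d$ is pinned down. You use the congruence $\lambda_j\equiv m/N \pmod 1$ and treat $m\in\{0,N\}$ separately. The paper instead excludes vertices up front, so that all $\lambda_j\in[0,1)$, and uses $x_j-x_d=\lambda_j-\lambda_d\in(-1,1)$.
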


\begin{proof}

Let us first assume that $N = k d$. 
Let $\b x=(x_1, \dots, x_d) \in \polytope{S}_{\b a} \cap \Z^d$ be such that $\b x \neq \b v_i$ for all $0 \leq i \leq d$.  
Then $\b x$ can be written as a convex combination: $\b x = \sum_{j = 0}^d \lambda_j \b v_j$ such that $\sum_{j = 0}^d \lambda_j = 1$ and $\lambda_j\neq 1$ for $0\leq i\leq d$.
It follows that
$$ x_j = \lambda_j + \lambda_d(k d - 1) \quad \text{for } 1 \leq j \leq d - 1, \quad \text{and} \quad x_d = k d \lambda_d$$
which implies that  $x_j = x_d + \lambda_j - \lambda_d$ for all $1 \leq j \leq d - 1$.  
Since $x_j$ and $x_d$ are integers, and $\lambda_j - \lambda_d \in (-1, 1)$, we must have $\lambda_j = \lambda_d$ for all $j$.  
Therefore, $\b x = (x_d, \dots, x_d)$ with $x_d \in \N$.
On the other hand, as $\lambda_d = \frac{x_d}{k d}$, we have 

\begin{equation}\label{eq:interior_points_1}
    1 = \lambda_0 + \sum_{j = 1}^d \lambda_j =  \lambda_0 + \frac{x_d}{k}.
\end{equation}
 It follows that $x_d\in \{1,\ldots,k\}$. 
 If $x_d = k$, then $\lambda_0 = 0$ and thus $\b x=\b p_k$. 
If $ x_d =i$ for $1\leq i\leq k - 1$, then $\lambda_0 = 1 - \frac{i}{k} \in (0,1)$, and thus $\b x=\b p_i$. This shows the claim in this case.

The case $N = k d + 1$ follows by almost the same argument.  
The only difference is that, in the final step, one obtains $\lambda_0 + x_d\frac{d}{k d + 1} = 1$, which implies $\lambda_0\in(0,1)$ if and only if $1\leq x_d\leq k$.
\end{proof}

The  following corollary completes the description of the lattice points of $\polytope{S}_{\b a}$ for the remaining values of $N$.

\begin{corollary}\label{coro:No_lattice_point}
Let $\b a = (N - 1, \dots, N - 1,\, N)\in \mathbb{N}^d$, and let $\polytope{S}_{\b a}$ be the one-row Hermite normal form simplex associated with $\b a$. Then:
\begin{enumerate}
    \item[(1)] If $1 < N < d$, then $\polytope{S}_{\b a}$ contains only its vertices as lattice points.
    \item[(2)] If $N = k d + m$ with $2 \leq m \leq d - 1$ for some $k \in \N$, then $\polytope{S}_{\b a}$ contains exactly its vertices and the points $\b p_i = (i, \dots, i)$ for $1 \leq i \leq k$ as lattice points.
\end{enumerate}
\end{corollary}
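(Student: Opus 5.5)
The plan is to reuse the convex-combination computation from the proof of \Cref{prop:Interio_points}, which never used the specific form of $N$ until its final step. Let $\b x=(x_1,\dots,x_d)\in\polytope{S}_{\b a}\cap\Z^d$ be a lattice point that is not a vertex, and write $\b x=\sum_{j=0}^d\lambda_j\b v_j$ with $\lambda_j\geq 0$, $\sum_j\lambda_j=1$ and no $\lambda_j$ equal to $1$. Comparing coordinates gives $x_j=\lambda_j+\lambda_d(N-1)$ for $1\leq j\leq d-1$ and $x_d=N\lambda_d$. Hence $x_j=x_d+\lambda_j-\lambda_d$ with $\lambda_j-\lambda_d\in(-1,1)$. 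Integrality forces $\lambda_j=\lambda_d$ for all $1\leq j\leq d-1$, so $\b x=(i,\dots,i)$ with $i=x_d\in\N$ and $\lambda_d=i/N$.

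Substituting into $\sum_j\lambda_j=1$ yields the single condition
$$\lambda_0=1-\frac{di}{N}\in[0,1).$$
Since $\b x\neq\b v_0$ we have $i\geq1$, so the lattice points other than vertices are exactly the points $\b p_i=(i,\dots,i)$ with $1\leq i\leq N/d$. Conversely, every such $i$ gives valid nonnegative coefficients $\lambda_1=\dots=\lambda_d=i/N$ and $\lambda_0=1-di/N$, so each $\b p_i$ indeed lies in $\polytope{S}_{\b a}$. It remains to check that $\b p_i$ is not a vertex. It equals none of $\b v_1,\dots,\b v_{d-1}$ for $d\geq 2$, and it differs from $\b v_d=(N-1,\dots,N-1,N)$ because its last two coordinates agree.

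The two cases now reduce to evaluating $\lfloor N/d\rfloor$. In case (1), $1<N<d$ gives $N/d<1$, so no $i\geq1$ qualifies and only the vertices remain. In case (2), $N=kd+m$ with $2\leq m\leq d-1$ gives $k<N/d<k+1$, so the admissible values are exactly $i=1,\dots,k$. This yields the points $\b p_1,\dots,\b p_k$.

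I expect no real obstacle. The only care needed is the boundary bookkeeping: equality $\lambda_0=0$ occurs only when $d\mid N$, which is excluded in both cases, and the argument must exclude vertices correctly, including the degenerate case $d=1$, which is vacuous since then $m$ cannot exist and $N<d$ is impossible.
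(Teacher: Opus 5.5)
Your proposal is correct and follows essentially the same route as the paper: you reuse the barycentric-coordinate argument from \Cref{prop:Interio_points} to force $\b x=(i,\dots,i)$, and then read off the admissible values of $i$ from $\lambda_0=1-di/N$ in each case. Your explicit checks that each $\b p_i$ with $1\leq i\leq N/d$ lies in $\polytope{S}_{\b a}$ and is not a vertex are a small piece of extra care that the paper leaves implicit.
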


\begin{proof}

Let $\b x \in \polytope{S}_{\b a} \cap \Z^d$ be a lattice point distinct from the vertices.
By the same argument as in the proof of \Cref{prop:Interio_points}, we know that $\b x = (i, \ldots, i)$ for some $i\in \N$ and $\lambda_j = \lambda_d$ for all $1\leq j \leq d-1$.

To prove (1), suppose that $1 < N < d$. Then
\begin{equation}
\label{eq:helper}
1 = \lambda_0 + \sum_{j = 1}^d \lambda_j = \lambda_0 + \frac{d i}{N}.
\end{equation}
Since $i \geq 1$ and $1 < N < d$, we have that $\frac{d i}{N}> 1$, which implies $\lambda_0 < 0$.
Therefore, the only lattice points contained in $\polytope{S}_{\b a}$ are its vertices.

(2) Suppose that $N = k d + m$ with $2 \leq m \leq d - 1$ and $k \in \N$. 
\eqref{eq:helper} yields
$$
\lambda_0 = 1 - \frac{d i}{k d + m}.
$$
Thus, $\lambda_0 \in (0, 1)$ if and only if
$$
0 < 1 -  \frac{i  d}{k d + m} < 1,
$$
which holds precisely when $1 \leq i \leq k$.
Therefore, the only non-vertex lattice points in $\polytope{S}_{\b a}$ are exactly the points $(i, \dots, i)$ for $1 \leq i \le k$.




\end{proof}

We are now ready to prove \Cref{thm:A}, part (1) and (2), which we first recall:

\begin{theorem}\label{thm:Unimodular_triangulation}
Let $\polytope{S}_{\b a}$ be the $d$-dimensional one-row Hermite normal form simplex associated with $\b a = (N - 1, \dots, N - 1, N)$, where $N = k d$ or $N = k d + 1$ for some $k\geq 1$. 
Then:
\begin{enumerate}
    \item if $N = kd + 1$, then $\polytope{S}_{\b a}$ admits a flag, regular, and unimodular triangulation;
    \item if $N = kd$, then $\polytope{S}_{\b a}$ admits a regular and unimodular triangulation; when $d=2$, this triangulation is also flag.
\end{enumerate}
\end{theorem}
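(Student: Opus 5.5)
We would prove both parts with an explicit collinear cone triangulation (\Cref{def:colinear_cone_triangulation}), taking distinguished vertex $\b v=\b v_0=\b 0$ and base facet $\polytope{P}_0=\conv(\b v_1,\dots,\b v_d)$. By \Cref{prop:Interio_points}, the only non-vertex lattice points of $\polytope{S}_{\b a}$ are $\b p_i=(i,\dots,i)$ for $1\le i\le k$, and these lie on the line $L=\R(1,\dots,1)$ through $\b v_0$. The barycentric coordinates in the proof of \Cref{prop:Interio_points} give $\lambda_0=1-\frac{id}{N}$ for $\b p_i$, which separates the two cases.
\begin{itemize}
\item If $N=kd$, the point $\b p_k$ has $\lambda_0=0$ and $\lambda_j=\frac1d$ for $j\ge1$. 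So it is the barycenter of $\polytope{P}_0$, and $\polytope{P}_0\cap\Z^d$ consists of the vertices together with $\b p_k$. The set $\mathcal I=\{\b p_1,\dots,\b p_{k-1}\}$ lies in the interior.
\item If $N=kd+1$, all $\b p_1,\dots,\b p_k$ are interior, and $\polytope{P}_0$ contains only its vertices.
\end{itemize}
In both cases the hypothesis of \Cref{def:colinear_cone_triangulation} holds.

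Next we choose the base triangulation $\mathcal T_0$ of $\polytope{P}_0$.
\begin{itemize}
\item For $N=kd+1$, take $\mathcal T_0=\{\polytope{P}_0\}$.
\item For $N=kd$, take the stellar subdivision of $\polytope{P}_0$ at $\b p_k$, with maximal cells $\conv(\b p_k,F_j)$, where $F_j$ is the facet of $\polytope{P}_0$ opposite $\b v_j$.
\end{itemize}
Both are regular. The trivial one uses any constant height; the stellar one lowers the height at $\b p_k$. We then apply \Cref{lem:Iterated_one_point_extension}, adding points along $L$ from $\polytope{P}_0$ towards $\b v_0$.
\begin{itemize}
\item For $N=kd+1$ the order is $\b p_k,\b p_{k-1},\dots,\b p_1,\b v_0$.
\item For $N=kd$ the order is $\b p_{k-1},\dots,\b p_1,\b v_0$.
\end{itemize}
Each new point lies strictly outside the current polytope, because it is farther from $\polytope{P}_0$ along $L$.

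The key geometric step is to identify the visible facets at each stage. Each intermediate polytope is $\conv(\polytope{P}_0\cup\{\b q\})$, where $\b q$ is the last point added on $L$. The next point $\b q'$ lies on the ray from $\polytope{P}_0$ through $\b q$, beyond $\b q$, so it sees exactly the $d$ facets $\conv(\b q,F_j)$, and not $\polytope{P}_0$. In the first step of the case $N=kd+1$, the point $\b p_k$ sees only $\polytope{P}_0$. We expect this to be the main point to check carefully. We would verify it with the barycentric coordinates $\lambda_0$ and $\lambda_j=\lambda_d$, using that $\b q,\b q'\in L$ have equal coordinates $\lambda_1=\dots=\lambda_d$. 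The consequence is that the restriction of the current triangulation to the visible region consists of $d$ simplices $\conv(\b q,F_j)$, after also subdividing $F_j$ at $\b p_k$ when $N=kd$. So every step adds exactly $d$ maximal simplices, as in the proof of \Cref{lem:shelling_collinear_cone}.

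The count then follows.
\begin{itemize}
\item For $N=kd$: $d$ simplices in $\mathcal T_0$, plus $k$ extension steps ($\b p_{k-1},\dots,\b p_1,\b v_0$) of $d$ simplices each, gives $d+(k-1)d+d$. We would recount this carefully; the consistent total in the proof of \Cref{lem:shelling_collinear_cone} is $\sigma_{r,j}$ with $1\le r\le m=k$, i.e.\ $kd=N$ simplices.
\item For $N=kd+1$: the simplex $\tau$ plus $kd$ further simplices, i.e.\ $1+kd=N$.
\end{itemize}
Since $\nVol(\polytope{S}_{\b a})=|\det A|=N$, \Cref{lem:volume_count_unimodular} gives unimodularity.

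Regularity of the final triangulation follows from regularity of $\mathcal T_0$ via \Cref{remark:flag_regular}. For flagness the same remark reduces the question to $\mathcal T_0$.
\begin{itemize}
\item For $N=kd+1$, $\mathcal T_0$ is a single simplex and hence flag.
\item For $N=kd$, the stellar subdivision has $\{\b v_1,\dots,\b v_d\}$ as a minimal non-face of size $d$. It is therefore flag exactly when $d=2$, which gives the flag claim in part (2).
\end{itemize}
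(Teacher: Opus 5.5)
Your proposal is correct and follows the paper's proof essentially step for step. You use the same collinear cone triangulation with apex $\b v_0$ over the base $\conv(\b v_1,\dots,\b v_d)$, the same trivial (for $N=kd+1$) or stellar-at-$\b p_k$ (for $N=kd$) base triangulation, the same count $kd+1$ or $kd$ compared with $\nVol(\polytope{S}_{\b a})=N$ via \Cref{lem:volume_count_unimodular}, and \Cref{remark:flag_regular} for regularity and flagness; your barycentric check of the visible facets is a detail the paper leaves implicit.

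The one thing to fix is your tentative count for $N=kd$. The $d$ cells of the stellar subdivision are $(d-1)$-dimensional and are not maximal cells, so the $k$ extension steps $\b p_{k-1},\dots,\b p_1,\b v_0$, each adding exactly $d$ new $d$-simplices, give exactly $kd$ with no extra $+d$.
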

\begin{proof}
By \Cref{prop:Interio_points}, if $\b a=(N-1,\dots,N-1,N)$ with $N=kd$ or $N=kd+1$, then the one-row Hermite normal form simplex $\polytope{S}_{\b a}$ contains no lattice points other than its vertices and the points $\b p_i=(i,\dots,i)$ for $1\leq i\leq k$.
Let $\Delta\coloneqq\conv(\b v_1,\dots,\b v_d)$. 
Then $\polytope{S}_{\b a}=\conv(\Delta,\b v_0)$.

First assume that $N=kd$.
Then $\b p_k\in\relint(\Delta)$.
Let $\mathcal J$ be the star triangulation of $\Delta$ obtained by coning with  $\b p_k$ over the facets of $\Delta$.
Since the hypotheses of \Cref{def:colinear_cone_triangulation} are satisfied with distinguished vertex $\b v_0$, the simplex $\polytope{S}_{\b a}$ admits the collinear cone triangulation $\mathcal T$ induced from $(\Delta,\mathcal J)$.
Since $\mathcal J$ is regular, \Cref{remark:flag_regular} implies that so is $\mathcal T$. 
If $d=2$, then $\Delta$ is a segment and the triangulation $\mathcal J$ is flag; hence
$\mathcal T$ is flag in this case by \Cref{remark:flag_regular}.
Since the triangulation $\mathcal J$ has exactly $d$ maximal simplices of dimension $d-1$, applying the collinear cone triangulation along $\b p_{k-1},\dots,\b p_1,\b v_0$ produces $k$ layers, each consisting of $d$ maximal simplices. Hence, $\mathcal T$ consists of exactly $kd$ maximal simplices.
Since $\nVol(\polytope{S}_{\b a})=kd$ by
\cite[Proposition~2.5]{Bajo2025LocalF}, unimodularity follows from
\Cref{lem:volume_count_unimodular}.

Now, suppose $N=kd+1$. 
Then $\Delta$ contains no lattice points other than its vertices, so we take $\mathcal J=\{\Delta\}$. 
Again, the hypotheses of \Cref{def:colinear_cone_triangulation} are satisfied with distinguished vertex $\b v_0$, and thus $\polytope{S}_{\b a}$ admits the collinear cone triangulation $\mathcal T$ induced from $(\Delta,\mathcal J)$. 
Since $\mathcal J$ is flag and regular, \Cref{remark:flag_regular} implies that $\mathcal T$ is also flag and regular.
The first step produces the simplex $\conv(\b p_k,\b v_1,\dots,\b v_d)$, and each of the remaining $k$ steps contributes $d$ maximal simplices.
Therefore, $\mathcal T$ has exactly $kd+1$ maximal simplices.
Since $\nVol(\polytope{S}_{\b a})=kd+1$, unimodularity again follows from
\Cref{lem:volume_count_unimodular}.
\end{proof}

\begin{remark}
If $N=kd$, then the base triangulation $\mathcal J$ of $\Delta$ is obtained by coning with $\b p_k$ over the facets of $\Delta$. This triangulation is regular, but not flag for $d\geq 3$, since the vertices of $\Delta$ span a clique in the $1$-skeleton of $\mathcal J$, but do not form a face of $\mathcal J$.
\end{remark}
We illustrate \Cref{thm:Unimodular_triangulation} with an example. 
\begin{example}
Let $d = 2$ and suppose  $N = 2\cdot 2 =4$ and $\overline{N} = 2\cdot 2 + 1=5$, where $k = 2$.
Then $\b a = (3, 4)$ and  $ \overline{\b a} = (4, 5)$, and its associated Hermite normal form simplex is given by

$$\polytope{S}_{\b a} = \conv\{ (0, 0),\, (1, 0),\, (3,4)\} \quad \text{ and }\quad \polytope{S}_{\overline{\b a}} = \conv\{ (0, 0),\, (1, 0),\, (4,5)\}.$$

We know that $\nVol(\polytope{S}_{\b a}) = 4$ and $\nVol(\polytope{S}_{\overline{\b a}}) = 5$.
Consider the following simplices:

\begin{multicols}{2}
\raggedcolumns
$$
\begin{aligned}
\mathcal{J}_1 &= \conv\{(0,0),(1,0),(1,1)\}\\[0.6em]
\mathcal{J}_2 &= \conv\{(1,0),(1,1),(2,2)\}\\[0.6em]
\mathcal{J}_3 &= \conv\{(1,1),(2,2),(3,4)\}\\[0.6em]
\mathcal{J}_4 &= \conv\{(0,0),(1,1),(3,4)\}
\end{aligned}
$$

\columnbreak

$$
\begin{aligned}
\mathcal{J}_5 &= \conv\{(0,0),(1,1),(4,5)\}\\[0.6em]
\mathcal{J}_6 &= \conv\{(1,0),(2,2),(4,5)\}\\[0.6em]
\mathcal{J}_7 &= \conv\{(1,1),(2,2),(4,5)\}
\end{aligned}
$$
\end{multicols}

Then $\mathcal{J} = \{\mathcal{J}_1, \mathcal{J}_2, \mathcal{J}_3, \mathcal{J}_4\}$ is a unimodular triangulation of $\polytope{S}_{\b a}$ and  $\overline{\mathcal{J}} = \{\mathcal{J}_1, \mathcal{J}_2, \mathcal{J}_5, \mathcal{J}_6, \mathcal{J}_7\}$ is a unimodular triangulation of $\polytope{S}_{\overline{\b a}}$.
The triangulations are shown in \Cref{fig:Example_triangulation}.

\begin{figure}[h]
\centering
\begin{minipage}{0.45\textwidth}
    \centering
    \usetikzlibrary{patterns,patterns.meta}
\centering \begin{tikzpicture}[scale=1.5]

\draw[fill = blue!20] (0, 0) -- (1,1) -- (1,0) -- cycle;
\draw[fill = green!20] (0, 0) -- (1,1) -- (3,4) -- cycle;
\draw[fill = red!20] (3, 4) -- (1,1) -- (2,2) -- cycle;
\draw[fill = violet!20] (1,0) -- (1,1) -- (2,2) -- cycle;





\draw[black] (3,4) node{$\bullet$};
\draw[black] (0,0) node{$\bullet$};
\draw[black] (1,0) node{$\bullet$};
\draw[black] (1,1) node{$\bullet$};
\draw[black] (2,2) node{$\bullet$};



\draw[black] (3.3,4.2) node{$\b a $};
\draw[dashed, black](0 , 0)--(2 , 2);

\draw[->](-0.25 , 0)--(3.5 , 0);
\draw[->](0,-0.25 )--(0 , 4.5 );

\end{tikzpicture} 
\end{minipage}
\hfill
\begin{minipage}{0.45\textwidth}
    \centering
    \usetikzlibrary{patterns,patterns.meta}
\centering \begin{tikzpicture}[scale=1.3]

\draw[fill = blue!20] (0, 0) -- (1,1) -- (1,0) -- cycle;
\draw[fill = green!20] (0, 0) -- (1,1) -- (4,5) -- cycle;
\draw[fill = red!20] (4,5) -- (1,1) -- (2,2) -- cycle;
\draw[fill = violet!20] (1,0) -- (1,1) -- (2,2) -- cycle;
\draw[fill = darkblue!20] (1,0) -- (2,2) -- (4,5) -- cycle;

\draw[black] (4,5) node{$\bullet$};
\draw[black] (0,0) node{$\bullet$};
\draw[black] (1,0) node{$\bullet$};
\draw[black] (1,1) node{$\bullet$};
\draw[black] (2,2) node{$\bullet$};

\draw[black] (4.3,5.2) node{$\b a $};
\draw[dashed, black](0 , 0)--(2 , 2);

\draw[->](-0.25 , 0)--(4.5 , 0);
\draw[->](0,-0.25 )--(0 , 5.5 );

\end{tikzpicture} 
\end{minipage}
\caption{Left: Unimodular triangulation of $\polytope{S}_{\b a}$. Right: Unimodular triangulation of $\polytope{S}_{\overline{\b a}}$}
\label{fig:Example_triangulation}
\end{figure}

\end{example}

One might hope that the unimodular triangulation constructed above extends to cases where $N\notin\{kd,kd+1\}$, where $d$ is the dimension of $\polytope{S}_{\b a}$.  
However, this is not the case.  
Before analyzing the remaining values of $N$, we need some preliminary results.

\begin{lemma}\label{lem:kS_dilation}
Let $N = k d + m$ with $2 \leq m \leq d - 1$ and $k\in \N_{\geq 1}$ and let $\polytope{S}_{\b a}$ be the one-row Hermite normal form simplex associated with $\b a = (N - 1, \dots, N - 1, N)\in \N^d$. 
Then,  $(kn+1, \dots,  kn+1)\in n  \polytope{S}_{\b a}$ for every positive integer $n \ge \lceil \frac{d}{m} \rceil$.
\end{lemma}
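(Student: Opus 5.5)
The plan is to write the point $\b q_n\coloneqq (kn+1,\dots,kn+1)$ explicitly as a combination of the vertices $\b v_0,\dots,\b v_d$ of $n\polytope{S}_{\b a}$. We then check that all barycentric coordinates are nonnegative and that they sum to $n$. Membership in a simplex is equivalent to this, since $n\polytope{S}_{\b a}=\{\sum_j \mu_j \b v_j:\mu_j\ge 0,\ \sum_j\mu_j=n\}$.

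The first step solves for the coefficients. We repeat the computation from the proof of \Cref{prop:Interio_points}, now without the normalization $\sum\lambda_j=1$. Writing $\b q_n=\sum_{j=0}^d\mu_j\b v_j$ gives
$$
kn+1=\mu_j+\mu_d(N-1)\ (1\le j\le d-1),\qquad kn+1=N\mu_d .
$$
Hence $\mu_d=\frac{kn+1}{N}$, and for $1\le j\le d-1$,
$$
\mu_j=kn+1-\mu_d(N-1)=\mu_d N-\mu_d(N-1)=\mu_d .
$$
So $\mu_1=\cdots=\mu_d=\frac{kn+1}{N}\ge 0$, and $\mu_0$ is forced to be $n-\sum_{j\ge1}\mu_j=n-\frac{d(kn+1)}{N}$.

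The second step, which is the only real content, is to verify $\mu_0\ge 0$. This condition is $d(kn+1)\le nN=n(kd+m)$, i.e.\ $dkn+d\le dkn+nm$, i.e.\ $nm\ge d$. That holds exactly when $n\ge\lceil d/m\rceil$, because $n$ and $m$ are positive integers. Consequently all $\mu_j\ge 0$ and they sum to $n$, so $\b q_n\in n\polytope{S}_{\b a}$.

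The main obstacle is this inequality for $\mu_0$, together with making sure the threshold $\lceil d/m\rceil$ matches precisely. Once the barycentric coordinates are written down as above, both are immediate. One could also note that the assumption $2\le m\le d-1$ guarantees $\lceil d/m\rceil\ge 2$, so the statement is nonvacuous, but this is not needed for the inclusion itself.
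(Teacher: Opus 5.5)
Your proof is correct and follows the same route as the paper. Both solve for the barycentric coordinates by comparing coordinates, find $\mu_1=\cdots=\mu_d=\frac{kn+1}{N}$, and reduce $\mu_0\ge 0$ to $nm\ge d$. The only difference is that you scale by $n$, using $\mu_j=n\lambda_j$ with $\sum\mu_j=n$, where the paper takes an affine combination of the points $n\b v_i$.
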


\begin{proof}
Let $n \in \mathbb{N}$ and let $\b p = (k n+1,\dots,k n+1)\in \N^d$ for positive integers $n$ and $k$. 
We write $\b p$ as an affine combination of $n\b v_0,\ldots,n\b v_d$, \ie $\b p= \sum_{i=0}^d \lambda_i  (n \b v_i)$ with $\sum_{i=0}^d \lambda_i = 1$. We need to show that $\lambda_i\geq 0$ for $0\leq i\leq d$. 
Arguing as in \Cref{prop:Interio_points} by comparing coordinates, we obtain
$\lambda_d = \frac{k n+1}{n N}\geq 0$ and $\lambda_j = \lambda_d$ for $1 \leq j \leq d-1$. It remains to show that $\lambda_0\geq 0$. 
Since
$$
1 = \lambda_0 + \sum_{j=1}^d \lambda_j
= \lambda_0 + d\,\lambda_d
= \lambda_0 + \frac{d (k n+1)}{n N}
$$
we have $\lambda_0 \geq 0$ whenever
$$
\frac{d (k n+1)}{n (k d+m)} \leq 1,
$$
which is equivalent to $n \ge \left\lceil \frac{d}{m} \right\rceil$. 
This shows that $\b p \in n\polytope{S}_{\b a}$ for every $n \ge \left\lceil \frac{d}{m} \right\rceil$.

\end{proof}
The next statement is part of \Cref{thm:A} (3).

\begin{theorem}\label{thm:not_IDP_2}
Let $N = k d + m$ with $2 \leq m \leq d - 1$ and $k\in \N_{\geq 1}$ and let $\polytope{S}_{\b a}$ be the one-row Hermite normal form simplex associated with $\b a = (N - 1, \dots, N - 1, N)\in \N^d$. 
Then $\polytope{S}_{\b a}$ does not satisfy the integer decomposition property.
\end{theorem}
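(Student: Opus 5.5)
We exhibit, for a suitable dilation factor, a lattice point of $n\polytope{S}_{\b a}$ that cannot be written as a sum of $n$ lattice points of $\polytope{S}_{\b a}$. Fix $n=\lceil d/m\rceil$. By \Cref{lem:kS_dilation}, the point $\b q\coloneqq(kn+1,\dots,kn+1)$ lies in $n\polytope{S}_{\b a}$. Suppose, for a contradiction, that $\b q=\b u_1+\cdots+\b u_n$ with $\b u_i\in\polytope{S}_{\b a}\cap\Z^d$. By \Cref{coro:No_lattice_point}(2), each $\b u_i$ is either a vertex $\b v_0,\dots,\b v_d$ or one of the diagonal points $\b p_j=(j,\dots,j)$ with $1\leq j\leq k$.

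The key step is a coordinate-sum count. Let $s(\b x)$ denote the sum of the coordinates of $\b x$. Then $s(\b q)=d(kn+1)$. We compare this with the contributions of the possible summands:
\begin{itemize}
\item $s(\b v_0)=0$;
\item $s(\b v_j)=1$ for $1\leq j\leq d-1$;
\item $s(\b p_j)=dj\leq dk$;
\item $s(\b v_d)=(d-1)(N-1)+N=dN-d+1$, which is much larger.
\end{itemize}
The large vertex must therefore be handled through the last coordinate instead. The last coordinate of $\b q$ is $kn+1<N$, since $n\leq d$ (and $m\geq 2$) gives $kn+1\leq kd+1<N$. Hence $\b v_d$ cannot appear, because its last coordinate $N$ already exceeds $kn+1$ and all other summands have nonnegative coordinates.

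So every summand lies in $\{\b v_0,\dots,\b v_{d-1},\b p_1,\dots,\b p_k\}$. Now look at the last coordinate. Only the points $\b p_j$ contribute to it, each contributing $j\leq k$. With at most $n$ summands, the last coordinate of the sum is at most $nk<nk+1$, which is a contradiction. Therefore $\b q$ is not a sum of $n$ lattice points of $\polytope{S}_{\b a}$, and $\polytope{S}_{\b a}$ is not IDP.

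The step requiring care is the exclusion of $\b v_d$, that is, checking $kn+1<N$ for $n=\lceil d/m\rceil$. Since $m\geq 2$, we have $n\leq\lceil d/2\rceil\leq d$, so $kn+1\leq kd+1<kd+m=N$, and this step holds.

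Everything else is the routine observation that all lattice points of $\polytope{S}_{\b a}$ have nonnegative coordinates. This follows from the list in \Cref{coro:No_lattice_point}. The non-IDP conclusion, together with the remark that a unimodular triangulation forces IDP, then also gives the absence of unimodular triangulations in this case.
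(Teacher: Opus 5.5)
Your proof is correct and essentially the paper's argument: the same witness $(kn+1,\dots,kn+1)\in n\polytope{S}_{\b a}$ with $n=\lceil d/m\rceil$, and the same exclusion of $\b v_d$ via $kn+1<N$. You then bound only the last coordinate, where the paper bounds every summand coordinatewise by $\b p_k$; the coordinate-sum digression is unnecessary but harmless.
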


\begin{proof}
Let $n = \left\lceil \frac{d}{m} \right\rceil$. 
By \Cref{lem:kS_dilation}, we have $\b x = (n k + 1, \dots, n k + 1)\in n  \polytope{S}_{\b a}$.
Suppose, by contradiction, that $\polytope{S}_{\b a}$ satisfies the IDP. 
Then there exist $\b q_1,\ldots,\b q_n\in\polytope{S}_{\b a}\cap\Z^d$ such that  $\b x=\b q_1+\cdots+\b q_n$.
Since $m\geq 2$, we have $n = \left\lceil \frac{d}{m} \right\rceil \leq d - 1$.
Hence $nk + 1 \leq (d-1)k + 1 < N$.
Since all the coordinates of the points in $\polytope{S}_{\b a}$ are nonnegative, this implies that $\b q_i \neq \b v_d$ for all $1\leq i \leq n$.
By \Cref{coro:No_lattice_point}, the lattice points of $\polytope{S}_{\b a}$ that are not vertices are precisely $\b p_i = (i, \ldots, i)$ for $1\leq i \leq k$. 
Therefore each $\b q_i$ is either a vertex $\b v_0, \ldots, \b v_{d-1}$ or a point $\b p_i$.
It follows that  $\b q_i \leq \b p_k$ coordinatewise for all $1\leq i \leq n$. 
Thus $\b x = \b q_1 + \cdots + \b q_n \leq (nk, \ldots, nk) < (nk + 1, \ldots, nk + 1) = \b x$ which is a contradiction, and the result follows.

\end{proof}

\begin{remark}
\textbf{Intuition behind \Cref{thm:not_IDP_2}.} 
The failure of IDP when $N = kd + m$ with $2 \leq m \leq d-1$ can be understood
as follows.  Collinear lattice points $\b p_i=(i,\ldots,i)$ also occur in these
cases, but their position relative to the base facet does not produce the
unimodular layering available when $N=kd$ or $N=kd+1$.  In particular, the
point $\b x = (nk+1, \ldots, nk+1)$ (see \Cref{lem:kS_dilation}) cannot be properly 
decomposed.
\end{remark}

We illustrate the failure of the integer decomposition property in \Cref{thm:not_IDP_2} in an example. 
\begin{example}
Let $d=3$, $k=2$, and $m=2$, so that $N=8$ and $n=\lceil d/m \rceil=2$.
For the Hermite normal form simplex $\polytope{S}_{\b a}$ associated with $\b a=(7,7,8)$, we have $\b x=(5,5,5)\in 2\polytope{S}_{\b a}$.
Observe that
$$
\polytope{S}_{\b a} \cap \Z^3 = \{(0,0,0),\ (1,0,0),\ (0,1,0),\ (1,1,1),\ (2,2,2),\ (7,7,8)\}.
$$
Therefore, there are no lattice points $\b y,\b z\in\polytope{S}_{\b a}$ such that $\b x=\b y+\b z$, and hence $\polytope{S}_{\b a}$ is not IDP.
\end{example}

The next statement is the remaining part of \Cref{thm:A} (3).
\begin{theorem}\label{thm:Not_IDP_1}
Let $1<N<d$ and let $\polytope{S}_{\b a}$ be the one-row Hermite normal form simplex associated with  $\b a = (N - 1, \dots, N - 1, N)\in \N^d$. 
Then $\polytope{S}_{\b a}$ does not satisfy the integer decomposition property.
\end{theorem}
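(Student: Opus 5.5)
By \Cref{coro:No_lattice_point}(1), for $1<N<d$ the only lattice points of $\polytope{S}_{\b a}$ are its vertices $\b v_0,\dots,\b v_d$. So it suffices to find some $n\geq 2$ and a lattice point of $n\polytope{S}_{\b a}$ that is not a sum of $n$ vertices. Following the pattern of \Cref{lem:kS_dilation} and \Cref{thm:not_IDP_2}, I would use the diagonal point $\b x=(1,\dots,1)$ and the smallest dilation containing it.

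\textbf{Step 1: $\b x\in n\polytope{S}_{\b a}$.} Write $\b x=\sum_{i=0}^d\lambda_i(n\b v_i)$ with $\sum\lambda_i=1$. Comparing the last coordinate gives $nN\lambda_d=1$, so $\lambda_d=\frac{1}{nN}$. Comparing the coordinates $j\le d-1$ gives $n\lambda_j+n(N-1)\lambda_d=1$, hence $\lambda_j=\lambda_d$. Then
$$\lambda_0=1-\frac{d}{nN},$$
which is nonnegative as soon as $n\geq \lceil d/N\rceil$. Take $n=\lceil d/N\rceil$. Since $N<d$, we have $n\geq 2$, so the point is a genuine test of IDP.

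\textbf{Step 2: $\b x$ is not a sum of $n$ lattice points of $\polytope{S}_{\b a}$.} Suppose $\b x=\b q_1+\cdots+\b q_n$, where each $\b q_i$ is a vertex.
\begin{itemize}
\item The vertex $\b v_d$ cannot occur: its last coordinate is $N>1$, while all coordinates of vertices are nonnegative and $\b x$ has last coordinate $1$.
\item Every other vertex has last coordinate $0$, so the last coordinate of the sum is $0\neq 1$.
\end{itemize}
This is a contradiction, so $\polytope{S}_{\b a}$ is not IDP.

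The argument is thus very short. The only point needing care is Step 1: one must check $n\geq 2$ (which uses $N<d$) and that the dilation $n=\lceil d/N\rceil$ really contains $\b x$ with all barycentric coordinates nonnegative. I expect no further obstacle.
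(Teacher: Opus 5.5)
Your proof is correct, but it takes a different route from the paper. Both arguments start from \Cref{coro:No_lattice_point}(1), which says the only lattice points of $\polytope{S}_{\b a}$ are its vertices.

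The paper then argues algebraically. After homogenization the $d+1$ vertices are linearly independent, so the algebra $A_{\polytope{S}_{\b a}}$ they generate is a polynomial ring with Hilbert series $1/(1-t)^{d+1}$. The Ehrhart series, however, is $h^\ast(t)/(1-t)^{d+1}$ with $h^\ast(1)=\nVol(\polytope{S}_{\b a})=N>1$. The two series differ, so IDP fails.

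You instead exhibit an explicit witness, in the spirit of the paper's proof of \Cref{thm:not_IDP_2}. You show $(1,\dots,1)\in n\polytope{S}_{\b a}$ with $n=\lceil d/N\rceil\geq 2$, and that this point cannot be a sum of $n$ vertices because of its last coordinate. Both your barycentric computation ($\lambda_1=\dots=\lambda_d=\tfrac{1}{nN}$, $\lambda_0=1-\tfrac{d}{nN}\geq 0$) and your last-coordinate argument check out.

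\textbf{What each route buys.} The paper's argument is non-constructive, but it uses only that the simplex has no lattice points besides its vertices and is not unimodular. That is why it transfers verbatim to \Cref{prop:NO_IPD_1N} and \Cref{prop:non_idp_two_rows}. Your argument is more elementary and more informative: it names a concrete non-decomposable point and a dilation in which decomposition fails. On the other hand, the last-coordinate trick depends on the specific shape of $\b a$.

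\textbf{Your dilation is optimal.} The Hilbert-series comparison shows that the first dilation in which IDP fails is the least $i\geq 1$ with $h^\ast_i\neq 0$. Here the parallelepiped elements are $\operatorname{frac}\bigl(-\tfrac{dm}{N}\bigr)$ followed by $d$ entries $\tfrac{m}{N}$, for $0\leq m\leq N-1$. Hence $h^\ast(t)=\sum_{m=0}^{N-1}t^{\lceil dm/N\rceil}$, whose smallest positive exponent is exactly your $n=\lceil d/N\rceil$.
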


\begin{proof}
Let $\polytope{S}_{\b a}^+ \coloneqq \{(1,\b x) ~:~ \b x \in \polytope{S}_{\b a}\} \subset \mathbb{R}^{d+1}$
be the homogenization of $\polytope{S}_{\b a}$.
Let $A$ be the matrix whose columns are the lattice points of
$\polytope{S}_{\b a}^+$. 
Since, by \Cref{coro:No_lattice_point}, the only lattice points of $\polytope{S}_{\b a}^+$
are its vertices, $A$ is a $(d+1)\times(d+1)$ matrix.
As $\det(A)\neq 0$, we have $\ker(A)=\{\b 0\}$, which directly implies that  the toric ideal $\mathcal{I}_{\polytope{S}_{\b a}} \coloneqq \langle x^{\b u^+} - x^{\b u^-} : \b u \in \ker_{\Z}(A) \rangle$
is trivial. Since it is well-known that  $\mathbb{K}[x_{\alpha}~:~\alpha\in \polytope{S}_{\b a}\cap \mathbb{Z}^d]/\mathcal{I}_{\polytope{S}_{\b a}}$ and $A_{\polytope{S}_{\b a}}$ have  the same Hilbert series, this implies
$$
\mathrm{Hilb}(A_{\polytope{S}_{\b a}};t) = \frac{1}{(1-t)^{d+1}}.
$$

On the other hand, the Ehrhart series of $\polytope{S}_{\b a}$ equals $\text{Ehr}_{\polytope{S}_{\b a}}(t) = \frac{h_{\polytope{S}_{\b a}}^\ast(t)}{(1 - t)^{d+1}}$. 
Since $h^\ast(1)=\nVol(\polytope{S}_{\b a})=N>1$, we have $h^\ast(t)\neq1$.
This implies that $\mathrm{Hilb}(A_{\polytope{S}_{\b a}};t)   \neq \text{Ehr}_{\polytope{S}_{\b a}}(t)$. 
It follows that $\polytope{S}_{\b a}$ does not have the integer decomposition property. 

\end{proof}
We note that combining \Cref{thm:Unimodular_triangulation},  \Cref{thm:Not_IDP_1} and \Cref{thm:not_IDP_2} yields \Cref{thm:A}.



\Cref{thm:A} is of particular interest, as the study of the integer decomposition property is a central topic connecting commutative algebra and the theory of lattice polytopes. 
In general, determining whether a lattice polytope satisfies the IDP is a difficult and widely open problem, even within restricted families. 
In the specific case of one-row Hermite normal form simplices, only a few results are known. 
It was shown in \cite[Proposition~3.8]{Bajo2025LocalF} that if $\b a_q = (q^{k-1},\ldots,q,1,q^k)$, then the associated simplex $\polytope{S}_{\b a_q}$ does not satisfy the IDP. 
On the other hand, the sequence $\b a_N = (1,\ldots,1,N)\in \mathbb{Z}^d$ is also studied in \cite{Bajo2025LocalF}, but without addressing the IDP property. We will now close this gap. We first need a preparatory lemma.

\begin{lemma}\label{pro:lattice_points_1N}
Let $\b a_N = (1,\ldots, 1, N)$, and let $\polytope{S}_{\b a_N}$ be the Hermite normal form simplex associated with $\b a_N$. 
If $d > 2$, then for all $N \in \N_{>1}$, the simplex $\polytope{S}_{\b a_N}$ contains only its vertices as lattice points.  
\end{lemma}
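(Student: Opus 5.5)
The plan is to argue directly with barycentric coordinates, exactly as in the proof of \Cref{prop:Interio_points}. The vertices of $\polytope{S}_{\b a_N}$ are $\b v_0=\b 0$, $\b v_j=\b e_j$ for $1\leq j\leq d-1$, and $\b v_d=(1,\ldots,1,N)$. Take a lattice point $\b x=(x_1,\ldots,x_d)\in\polytope{S}_{\b a_N}\cap\Z^d$ and write $\b x=\sum_{j=0}^d\lambda_j\b v_j$ with $\lambda_j\geq 0$ and $\sum_j\lambda_j=1$. Comparing coordinates gives
$$
x_j=\lambda_j+\lambda_d\quad (1\leq j\leq d-1),\qquad x_d=N\lambda_d .
$$

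I will split into cases according to $\lambda_d$.
\begin{itemize}
\item If $\lambda_d=0$, then $\b x$ lies in the facet $\conv(\b 0,\b e_1,\ldots,\b e_{d-1})$. This facet is a unimodular simplex, so its only lattice points are its vertices.
\item If $\lambda_d=1$, then $\b x=\b v_d$.
\item Otherwise $0<\lambda_d<1$. This is the case that needs work.
\end{itemize}

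In the remaining case, fix $1\leq j\leq d-1$. Since $x_j$ is an integer and $x_j=\lambda_j+\lambda_d>0$, we get $x_j\geq 1$. Hence $\lambda_j=x_j-\lambda_d\geq 1-\lambda_d$. Summing over these $d-1$ indices and adding $\lambda_d$ gives
$$
1\;\geq\;\sum_{j=1}^{d}\lambda_j\;\geq\;(d-1)(1-\lambda_d)+\lambda_d,
$$
which rearranges to $(d-2)(1-\lambda_d)\leq 0$. Since $d>2$, this forces $\lambda_d\geq 1$, contradicting $\lambda_d<1$. So no lattice points other than the vertices exist.

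The only step needing care is the intermediate case: one must notice that integrality pushes every $x_j$ up to at least $1$, and that the resulting total barycentric mass then exceeds $1$ precisely when $d>2$. The hypothesis $N>1$ is used only to make $\polytope{S}_{\b a_N}$ non-unimodular, so that the statement is non-trivial. Indeed, the integrality of $x_d=N\lambda_d$ is never needed in the argument above.
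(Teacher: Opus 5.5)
Your proof is correct and follows essentially the same route as the paper's. Both work in barycentric coordinates, rule out $\lambda_d=0$ via the unimodular facet $\conv(\b v_0,\ldots,\b v_{d-1})$, and then use integrality of $x_j=\lambda_j+\lambda_d>0$ to derive a contradiction from $d>2$. The only difference is cosmetic: the paper first pins down $x_j=1$ exactly (using $\lambda_j<1$) and computes $\lambda_0=(d-2)(\lambda_d-1)<0$, whereas you need only the lower bound $x_j\geq 1$ and the resulting inequality $(d-2)(1-\lambda_d)\leq 0$.
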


\begin{proof}
Let $\b x \in \polytope{S}_{\b a_N} \cap \Z^d$ be a lattice point. Assume by contradiction that $\b x\neq \b v_i$ for $0 \leq i \leq d$. Then there exist $\lambda_j \in [0,1)$ ($0\leq j\leq d$) with $\sum_{j = 0}^d \lambda_j = 1$ such that   $\b x = \sum_{j = 0}^d \lambda_j  \b v_j$. 
If $\b x = (x_1, \dots, x_d)$, the convex combination gives:
$$
x_j = \lambda_j + \lambda_d \quad \text{for } 1 \leq j \leq d - 1, \quad \text{and} \quad x_d = N \lambda_d.
$$
Thus, we have $x_j\in\{0,1\}$ for all $1\leq j\leq d-1$. Moreover, since the only lattice points of the simplex $\conv(\b v_0,\dots,\b v_{d-1})$ are its vertices, we must have  $\lambda_d>0$. This implies $x_j=1$ for all $1\leq j\leq d-1$, \ie $\b x=(1,\dots,1,N\lambda_d)$.
Therefore,
$$
1 = \lambda_0 + \sum_{j = 1}^{d-1} \lambda_j + \lambda_d = \lambda_0 + (d - 1) (1 - \lambda_d) + \lambda_d.
$$
Simplifying, we get $ \lambda_0 = (d - 2) (\lambda_d - 1)$.
Since $d > 2$ and $ 0\leq \lambda_d < 1 $, it follows that $\lambda_0 <0$, which is a contradiction.
Therefore, the only lattice points in $\polytope{S}_{\b a_N}$ are its vertices.
\end{proof}

Finally, using the same argument as in \Cref{thm:Not_IDP_1}, together with \Cref{pro:lattice_points_1N}, we obtain the following result.

\begin{proposition}\label{prop:NO_IPD_1N}
Let $\polytope{S}_{\b a_N}$ be the one-row Hermite normal form simplex associated with the vector $\b a_N = (1, \dots, 1, N)\in \Z^d$, where $N \in \N_{>1}$.  
If $d>2$, then $\polytope{S}_{\b a_N}$ does not satisfy the integer decomposition property.
\end{proposition}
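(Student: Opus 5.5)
The plan is to mirror the proof of \Cref{thm:Not_IDP_1}. The key input is \Cref{pro:lattice_points_1N}: for $d>2$ and every $N>1$, the only lattice points of $\polytope{S}_{\b a_N}$ are its $d+1$ vertices. The argument then compares two Hilbert series: that of the semigroup algebra generated in degree one, and that of the Ehrhart ring.

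\begin{enumerate}
\item Pass to the homogenization $\polytope{S}_{\b a_N}^+=\{(1,\b x):\b x\in\polytope{S}_{\b a_N}\}\subset\R^{d+1}$. Let $A$ be the matrix whose columns are the lattice points of $\polytope{S}_{\b a_N}^+$. By \Cref{pro:lattice_points_1N}, $A$ is square of size $(d+1)\times(d+1)$. Its columns are affinely independent, so $\det(A)\neq 0$ and $\ker_{\Z}(A)=\{\b 0\}$.
\item Conclude that the toric ideal $\mathcal I_{\polytope{S}_{\b a_N}}$ is zero. Hence $A_{\polytope{S}_{\b a_N}}$ is a polynomial ring in $d+1$ variables of degree one, and
$$\mathrm{Hilb}(A_{\polytope{S}_{\b a_N}};t)=\frac{1}{(1-t)^{d+1}}.$$
\item Compute the normalized volume. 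The Hermite normal form matrix is lower triangular with diagonal $(1,\ldots,1,N)$, so $\nVol(\polytope{S}_{\b a_N})=|\det H|=N$. Therefore $h^\ast_{\polytope{S}_{\b a_N}}(1)=N>1$, and $h^\ast_{\polytope{S}_{\b a_N}}(t)\neq 1$.
\item Compare the two series. By step 3, $\operatorname{Ehr}_{\polytope{S}_{\b a_N}}(t)=\frac{h^\ast(t)}{(1-t)^{d+1}}$ differs from the Hilbert series in step 2. Thus $A_{\polytope{S}_{\b a_N}}\neq\mathbb K[\polytope{S}_{\b a_N}]$, and $\polytope{S}_{\b a_N}$ is not IDP.
\end{enumerate}

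There is also a more elementary route. Since $N>1$, the parallelepiped group $\Gamma$ is nontrivial. Any nonzero class in $\Gamma$ yields a lattice point in some dilate $m\polytope{S}_{\b a_N}$ with $m\geq 2$ that is not a nonnegative integer combination of vertices. Since the only lattice points of $\polytope{S}_{\b a_N}$ are its vertices, such a point cannot be a sum of lattice points of $\polytope{S}_{\b a_N}$.

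No step is a genuine obstacle. The substantive content is the lattice-point count in \Cref{pro:lattice_points_1N}, which is already established. The only care needed is in step 3: confirming $\nVol=N$ from the triangular determinant, which also equals $|\Gamma|$.
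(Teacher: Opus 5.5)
Your proposal is correct and is the paper's proof: the paper reruns the Hilbert-series comparison from \Cref{thm:Not_IDP_1}, with \Cref{pro:lattice_points_1N} showing that the only lattice points are the vertices. Hence $A_{\polytope{S}_{\b a_N}}$ has Hilbert series $1/(1-t)^{d+1}$, while $h^\ast(1)=\nVol(\polytope{S}_{\b a_N})=N>1$. Your alternative route, via a nonzero element of the parallelepiped group giving an indecomposable lattice point at height $m\geq 2$, is also valid and makes the same obstruction explicit.
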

\begin{proof}
The proof is identical to the proof of \Cref{thm:Not_IDP_1}, using \Cref{pro:lattice_points_1N} in place of \Cref{coro:No_lattice_point}.
\end{proof}

Note that the simplices associated with the two sequences studied in \cite{Bajo2025LocalF} fail to satisfy the integer decomposition property and consequently do not admit a unimodular triangulation.
This contrasts with the family $\b a=(N-1,\ldots,N-1,N)$ considered in the present work, for which we obtain an exact characterization of when the associated Hermite normal form simplex admits a unimodular triangulation.

\begin{question}
Can one find effective criteria on $\b a \in \N^d$ ensuring that the one-row Hermite normal form simplex $\polytope{S}_{\b a}$ admits a unimodular triangulation or satisfies the integer decomposition property?
\end{question}

A natural first step is to restrict to one-row vectors whose first $d-1$ entries are constant. 

\begin{question}
For which values of $q$ and $N$, with $1\leq q<N$, does the one-row Hermite normal form simplex associated with 
$\b a=(N-q,\ldots,N-q,N)$ admit a unimodular triangulation or satisfy the integer decomposition property?
\end{question}
Note that the family studied in this section corresponds to the case $q=1$ in the previous question.
\section{Ehrhart-theoretic properties}
\label{sect:1row}

Ehrhart positivity and the unimodality of the Ehrhart polynomial are classical problems in Ehrhart theory. 
In \cite{Solus_Fu_Unimodal}, the authors provide a comprehensive overview of results on these questions for various families of lattice polytopes.
In this section, we extend this overview to the case of one-row Hermite normal form simplices. 
More precisely, for the family associated with $\b a = (N - 1, \dots, N - 1, N)\in\N^d$ where $N = kd$ or $N = kd + 1$, we prove that these simplices are Ehrhart positive. 
Nevertheless, we observe that for certain positive integers $k$, the corresponding Ehrhart polynomial is not unimodal.

Two central objects in our study are the $h^{\ast}$-polynomial and the local $h^{\ast}$-polynomial. 
In \cite{Bajo2025LocalF} and \cite{Hibi_Hermite}, the authors studied the local $h^{\ast}$-vector and the $h^{\ast}$-vector for one-row Hermite normal form simplices, respectively, with particular emphasis on the unimodality of the local $h^{\ast}$-vector and the shifted symmetry property of the $h^{\ast}$-vector. 
In this section, we explicitly compute the $h^{\ast}$-vector using combinatorial techniques and provide a closed formula for the local $h^{\ast}$-polynomial in the cases mentioned above. In particular, we prove \Cref{thm:B}.

\subsection{Useful identities}
\begin{lemma}\label{lem:Sum_binomial}
Let $d\in\Z_{\geq1}$ and $t$ be an indeterminate. 
Then the following identity holds

\begin{equation}
\sum_{j = 0}^{d-1}\binom{t + j}{d} = \binom{t + d}{d + 1} - \binom{t}{d + 1}
\end{equation}
\end{lemma}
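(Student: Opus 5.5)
The plan is to prove the identity by telescoping with Pascal's rule. Here $\binom{t+j}{d}$ denotes the polynomial $\frac{(t+j)(t+j-1)\cdots(t+j-d+1)}{d!}$ in the indeterminate $t$. Pascal's rule
$$
\binom{x+1}{d+1}-\binom{x}{d+1}=\binom{x}{d}
$$
holds as an identity of polynomials in $x$. It holds for every nonnegative integer $x$ by the usual combinatorial argument, and two polynomials that agree on infinitely many points are equal. Alternatively, one can factor out the common product $x(x-1)\cdots(x-d+1)/(d+1)!$ and check directly that $(x+1)-(x-d)=d+1$.

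The main step is to apply this rule with $x=t+j$ for $j=0,1,\ldots,d-1$, which gives
$$
\binom{t+j}{d}=\binom{t+j+1}{d+1}-\binom{t+j}{d+1}.
$$
Summing these $d$ equations, the right-hand side telescopes: every intermediate term $\binom{t+j}{d+1}$ with $1\le j\le d-1$ appears once with each sign and cancels. What remains is
$$
\binom{t+d}{d+1}-\binom{t}{d+1},
$$
which is exactly the claimed right-hand side.

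There is no real obstacle here. The only point needing care is the justification that Pascal's rule holds for an indeterminate upper argument, and the polynomial-identity argument above settles it. As a sanity check, take $d=1$: the left-hand side is $\binom{t}{1}=t$, and the right-hand side is $\binom{t+1}{2}-\binom{t}{2}=t$, as expected.
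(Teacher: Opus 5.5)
Your proposal is correct and follows the paper's proof exactly: substitute Pascal's rule $\binom{t+j}{d}=\binom{t+j+1}{d+1}-\binom{t+j}{d+1}$ into the sum and let it telescope. Your check that Pascal's rule holds as a polynomial identity in the indeterminate (factor out the common product, then use $(x+1)-(x-d)=d+1$) is a useful detail that the paper leaves implicit.
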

\begin{proof}
This follows directly by substituting $ \binom{t + j}{d} = \binom{t + j + 1}{d + 1} - \binom{t + j}{d + 1}$ in the sum and then observing that all but the first and the last summand of the sum cancel. 
\end{proof}

For $1 \le i \le d$, define the \defn{elementary symmetric polynomial} $e_i(X_1,\dots,X_d)$ of degree $i$ in the variables $X_1, \dots,X_d$ as
$$
e_i(X_1,\dots,X_d)
=
\sum_{1 \le j_1 < \dots < j_i \le d}
X_{j_1}\cdots X_{j_i}.
$$

It is well known (see \cite[p.~20]{macdonald1998symmetric}) that these polynomials satisfy the recurrence relation 
\begin{equation}\label{eq:Elementary_recurrence}
e_i(X_1,\dots,X_d) = e_i(X_1,\dots,X_{d-1}) + X_d e_{i-1}(X_1,\dots,X_{d-1}),
\end{equation}
with $e_0(X_1,\dots,X_d) = 1$ and $e_i(X_1,\dots,X_d) = 0$ if $i<0$.

\begin{lemma}\label{lem:Binomial_difference}
Let
\[
f(t)=\binom{t+d}{d}-\binom{t}{d}
\qquad\text{and}\qquad
g(t)=\binom{t+d}{d+1}-\binom{t}{d+1},
\]
where $d\in\Z_{>0}$. Then $f(t)$ has degree $d-1$ and all its
coefficients are strictly positive.  Moreover, the coefficient of $t^j$ in $g(t)$ is strictly positive when
$1\leq j\leq d$ and $j\equiv d\pmod 2$, and is zero otherwise. 
\end{lemma}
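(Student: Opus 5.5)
For $f$, I would expand both binomials in terms of the linear factors and compare coefficients directly. Write
\[
\binom{t+d}{d}=\frac{1}{d!}\prod_{i=1}^{d}(t+i)=\frac{1}{d!}\sum_{j=0}^{d} e_{d-j}(1,\dots,d)\,t^j
\]
and
\[
\binom{t}{d}=\frac{1}{d!}\prod_{i=0}^{d-1}(t-i)=\frac{1}{d!}\sum_{j=0}^{d}(-1)^{d-j}e_{d-j}(0,1,\dots,d-1)\,t^j .
\]
The leading terms $t^d/d!$ cancel. For $j<d$, the coefficient of $t^j$ in $d!\,f(t)$ is
\[
e_{d-j}(1,\dots,d)-(-1)^{d-j}e_{d-j}(0,\dots,d-1).
\]
When $d-j$ is odd, both terms are nonnegative and the first is positive, so the coefficient is positive. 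When $d-j$ is even, I use that $e_i$ is strictly increasing in each positive argument. Entrywise, $(1,\dots,d)$ dominates $(0,\dots,d-1)$, so $e_i(1,\dots,d)>e_i(0,\dots,d-1)$ for $1\le i\le d$. Hence the coefficient is again positive. For $j=d-1$, the coefficient is $\binom{d+1}{2}+\binom{d}{2}>0$, so $\deg f=d-1$ exactly.

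For $g$, I would use a symmetry argument. Set $u=t+d/2$. The roots of $\binom{t+d}{d+1}$ are $t=-d,\dots,0$, and the roots of $\binom{t}{d+1}$ are $t=0,\dots,d$. Under the reflection $t\mapsto -d-t$ we get
\[
\binom{-t}{d+1}=(-1)^{d+1}\binom{t+d}{d+1}.
\]
From this, one checks that $g(-t)=(-1)^{d}g(t)$, i.e. $g$ has the parity of $d$. So coefficients of $t^j$ with $j\not\equiv d\pmod 2$ vanish. The degree is at most $d$, since the $t^{d+1}$ terms cancel, and the constant term is $0$ because $g(0)=0$; this is consistent with the parity when $d$ is even.

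Positivity for $j\equiv d$ is the delicate part. I would write $(d+1)!\,g(t)=P(t)-P(-t)\cdot(-1)^{d+1}$, where $P(t)=\prod_{i=0}^{d}(t+i)$ has all coefficients nonnegative, with $p_j=e_{d+1-j}(0,1,\dots,d)=e_{d+1-j}(1,\dots,d)$. The second product equals $\prod_{i=0}^{d}(t-i)=(-1)^{d+1}P(-t)$. Then
\[
(d+1)!\,g(t)=P(t)-(-1)^{d+1}P(-t)=\sum_j p_j\bigl(1+(-1)^{d+j}\bigr)t^j .
\]
This gives $2p_j$ when $j\equiv d\pmod 2$ and $0$ otherwise. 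Since $p_j=e_{d+1-j}(1,\dots,d)>0$ for $1\le j\le d+1$, the coefficients for $1\le j\le d$ with $j\equiv d$ are strictly positive. The case $j=d+1$ is excluded by parity. The constant term $p_0=e_{d+1}(1,\dots,d)=0$ gives the vanishing at $j=0$. This direct computation subsumes the parity argument, so the only real care needed is the sign bookkeeping in $\binom{t}{d+1}=\frac{(-1)^{d+1}}{(d+1)!}P(-t)$.
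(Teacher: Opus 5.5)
Your proof is correct and follows essentially the same route as the paper: you expand $f$ into elementary symmetric polynomials and split by parity, using termwise domination $e_i(1,\dots,d)>e_i(0,\dots,d-1)$ in the even case where the paper uses the recurrence $e_i(S)-e_i(T)=d\,e_{i-1}(T)$, and for $g$ you use the same identity $(d+1)!\,g(t)=P(t)-(-1)^{d+1}P(-t)$. Your explicit check that $p_j=e_{d+1-j}(1,\dots,d)>0$ for $1\le j\le d$ and that $p_0=0$ supplies the strict-positivity and vanishing details that the paper's proof leaves implicit, since it only concludes nonnegativity.
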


\begin{proof}
Let $S=\{1,\ldots,d\}$ and $T=\{1,\ldots,d-1\}$. Then
\[
d!f(t)
=
\prod_{j=1}^d(t+j)
-
t\prod_{j=1}^{d-1}(t-j).
\]
Writing both products in terms of elementary symmetric polynomials gives
\[
d!f(t)
=
\sum_{i=0}^d
\left(e_i(S)-(-1)^ie_i(T)\right)t^{d-i},
\]
where $e_d(T)=0$ and $e_i(S)$ and $e_i(T)$ mean evaluations of the symmetric polynomials. The coefficient of $t^d$ vanishes. If
$1\leq i\leq d-1$ is odd, then
\[
e_i(S)-(-1)^ie_i(T)=e_i(S)+e_i(T)>0.
\]
If $2\leq i\leq d-1$ is even, then using \eqref{eq:Elementary_recurrence} and $S=T\sqcup\{d\}$ we obtain
\[
e_i(S)-e_i(T)=d\,e_{i-1}(T)>0.
\]
Finally, the constant term is $e_d(S)=d!>0$. Hence $f(t)$ has degree
$d-1$ and all its coefficients are strictly positive.

For $g(t)$, set
\[
P(t)\coloneqq\prod_{j=0}^d(t+j).
\]
Then
\[
\prod_{j=0}^d(t-j)=(-1)^{d+1}P(-t),
\]
and therefore
\[
(d+1)!g(t)=P(t)-(-1)^{d+1}P(-t).
\]
All coefficients of $P(t)$ are nonnegative. The expression on the right
retains twice the coefficients of $P(t)$ in degrees having the same parity as
$d$ and cancels all coefficients in the remaining degrees. Thus every
coefficient of $g(t)$ is nonnegative.
\end{proof}

\subsection{The $h^{\ast}$- and the local $h^{\ast}$-polynomial}

Let $\polytope{S}_{\b a}$ be the one-row Hermite normal form simplex associated to $\b a = (N - 1, \ldots, N - 1, N) \in \N^d$.
The $h^{\ast}$-vector of $\polytope{S}_{\b a}$ was computed in \cite[Prop.~3.4]{Hibi_Hermite} for the case $N = dk$, where $k$ is a positive integer.
The goal of this section is to provide another proof for this statement  by applying \Cref{thm:hstar_collinear_cone} to  the unimodular triangulation established in \Cref{thm:Unimodular_triangulation}. Moreover, we will extend this result to $N=dk+1$ and prove \Cref{thm:A}. To compute the local $h^{\ast}$-polynomial of $\polytope{S}_{\b a}$ we will rely on \eqref{eq:local_h_S}.

We recall the first part of \Cref{thm:B}.
\begin{theorem}\label{thm:h_vector_OneR}

Let $\polytope{S}_{\b a}$ be the one-row Hermite normal form simplex associated with 
$\b a=(N-1,\ldots,N-1,N)$. Then
$$
h^\ast_{\polytope{S}_{\b a}}(t)=
\begin{cases}
1+k\displaystyle\sum_{j=1}^{d-1}t^j+(k-1)t^d, & \text{if } N=dk,\\[0.3em]
1+k\displaystyle\sum_{j=1}^{d}t^j, & \text{if } N=dk+1.
\end{cases}
$$
\end{theorem}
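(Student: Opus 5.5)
We will apply \Cref{thm:hstar_collinear_cone} to the unimodular collinear cone triangulation $\mathcal T$ of $\polytope{S}_{\b a}$ constructed in the proof of \Cref{thm:Unimodular_triangulation}. That triangulation has distinguished vertex $\b v_0$, base facet $\polytope{P}_0=\Delta=\conv(\b v_1,\dots,\b v_d)$, and line $L$ spanned by $(1,\dots,1)$. We first check the hypotheses of \Cref{thm:hstar_collinear_cone}:
\begin{itemize}
\item $\mathcal T$ is unimodular by \Cref{thm:Unimodular_triangulation}.
\item By \Cref{prop:Interio_points}, the only lattice points of $\polytope{S}_{\b a}$ besides the vertices are $\b p_i=(i,\dots,i)$ for $1\le i\le k$, and these all lie on $L$.
\item $\Delta$ has no boundary lattice points other than its vertices, since it contains no $\b p_i$ on its boundary. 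It has at most one relative interior lattice point, namely $\b p_k$ when $N=dk$ and none when $N=dk+1$.
\end{itemize}
The membership of $\b p_k$ is read off from the barycentric computation in \Cref{prop:Interio_points}. There $\lambda_0=1-x_d/k$ when $N=dk$, so $\lambda_0=0$ exactly at $x_d=k$ and $\b p_k\in\relint\Delta$. When $N=dk+1$ one has $\lambda_0>0$ for all $1\le i\le k$.

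Next we determine $h^\ast_{\Delta}$ and the parameter $m$ in each case. In the case $N=dk+1$, $\Delta$ is a unimodular $(d-1)$-simplex and $h^\ast_\Delta=1$. The points $\b p_1,\dots,\b p_k$ lie in the interior of $\polytope{S}_{\b a}$, since all barycentric coordinates are positive. In the case $N=dk$, $\Delta$ is triangulated by coning from $\b p_k$ into $d$ unimodular simplices, and its $h^\ast$-polynomial is $1+t+\cdots+t^{d-1}$. Here only $\b p_1,\dots,\b p_{k-1}$ lie in the interior of $\polytope{S}_{\b a}$.

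The main subtlety is the bookkeeping of $m$. The formula in \Cref{thm:hstar_collinear_cone} is stated with $m=\#(L\cap\polytope{P}^\circ\cap\Z^d)$. However, the proof of \Cref{lem:shelling_collinear_cone} indexes the added points as $\b p_1,\dots,\b p_m$ with $\b p_m=\b v_0$, so it counts the apex and not only the interior points. We will therefore recount directly from the shelling in the proof of \Cref{thm:hstar_collinear_cone}, with the layers counted as the number of points added along $L$.

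For $N=dk+1$, the points added are $\b p_k,\dots,\b p_1,\b v_0$, which is $k+1$ points. The shelling gives one simplex $\tau$ with weight $t^0$ and $k$ layers each contributing $t+\cdots+t^d$, for $1+k\sum_{j=1}^d t^j$.

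For $N=dk$, the points added are $\b p_{k-1},\dots,\b p_1,\b v_0$, which gives $k$ layers. The first layer shelled contributes $\sum_{j=1}^{d}t^{j-1}$ and each of the remaining $k-1$ layers contributes $\sum_{j=1}^d t^j$. The total is
\[
1+\sum_{j=1}^{d-1}t^j+(k-1)\sum_{j=1}^d t^j = 1+k\sum_{j=1}^{d-1}t^j+(k-1)t^d .
\]

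As a sanity check, evaluating at $t=1$ gives $dk$ and $dk+1$ respectively, which agree with $\nVol(\polytope{S}_{\b a})$.

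Some care is needed to make the apex count consistent with the statement of \Cref{thm:hstar_collinear_cone}. The cleanest route is to invoke its proof, namely the shelling weights $\omega(\sigma)$, rather than its displayed formula.
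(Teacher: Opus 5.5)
Your proof is correct and follows the paper's route: apply the collinear-cone shelling of \Cref{thm:hstar_collinear_cone} (via \Cref{lem:shelling_collinear_cone}) to the unimodular triangulation from \Cref{thm:Unimodular_triangulation}, with $h^\ast_{\polytope{P}_0}$ equal to $1+t+\cdots+t^{d-1}$ resp.\ $1$. Your off-by-one observation is also right: the paper plugs in $m=k$ resp.\ $m=k+1$, i.e.\ the number of points added along $L$ including the apex $\b v_0$. This matches the shelling in the proof of \Cref{thm:hstar_collinear_cone} and your recount, not the displayed definition $m=\#(L\cap\polytope{P}^\circ\cap\Z^d)$.
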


\begin{proof}
Let $\polytope{P}_0$ be the facet of $\polytope{S}_{\b a}$ opposite to $\b v_0$. By \Cref{thm:Unimodular_triangulation}, the simplex $\polytope{S}_{\b a}$ admits a unimodular collinear cone triangulation,
If $N=dk$ resp. $N=dk+1$, the simplex $\polytope{P}_0$ has a unique resp. no  lattice point in its relative interior. Hence,
 $$h^\ast_{\polytope{P}_0}(t)=\begin{cases}1+t+\cdots+t^{d-1}, \quad &\text{if } N=dk\\
 1, \quad &\text{if } N=dk+1
 \end{cases}.$$
 Applying \Cref{thm:hstar_collinear_cone}  with $m = k$ and $m=k+1$, respectively, gives
$$
h^\ast_{\polytope{S}_{\b a}}(t)=\begin{cases}
    
h^\ast_{\polytope{P}_0}(t)+(k-1)(t+t^2+\cdots+t^d)=1+k\sum_{j=1}^{d-1}t^j+(k-1)t^d,  \quad &\text{if } N=dk\\
h^\ast_{\polytope{P}_0}(t)+k(t+t^2+\cdots+t^d)=1+k\sum_{j=1}^d t^j,  \quad &\text{if } N=dk+1.
\end{cases}
$$

\end{proof}

For the rest of this section, we focus on the local $h^\ast$-polynomial of $\polytope{S}_{\b a}$. 
We recall the formula for the local $h^\ast$-polynomial from \Cref{thm:B}.

\begin{theorem}\label{thm:Local_one_row_dk}
 Let $\polytope{S}_{\b a}$ be the one-row Hermite normal form simplex associated with the vector $\b a = (N - 1, \dots, N - 1, N)\in \N^d$. 
 Then
 $$
\ell^\ast_{\polytope{S}_{\b a}}(t)=
\begin{cases}
(k-1)\displaystyle\sum_{i=1}^{d}t^i, & \text{if } N=dk,\\[0.3em]
k\displaystyle\sum_{i=1}^{d}t^i, & \text{if } N=dk+1.
\end{cases}
$$

\end{theorem}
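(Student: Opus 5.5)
The plan is to compute $\ell^\ast_{\polytope{S}_{\b a}}(t)$ directly from \eqref{eq:local_h_S}. We use the description of $\Gamma$ as the cyclic group $\{\b x_m=\operatorname{frac}(m\overline{\b a}) : 0\le m\le N-1\}$, where $\overline{\b a}$ is the last row of $A^{-1}$. So the first step is to determine $\overline{\b a}$ explicitly.

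\textbf{Step 1: barycentric coordinates.} For a lattice point $\b x=(x_1,\dots,x_d)$, express $(1,\b x)$ in the rows of $A$, exactly as in the proof of \Cref{prop:Interio_points}. This gives
\begin{equation*}
\lambda_d=\frac{x_d}{N},\qquad \lambda_j=x_j-\frac{(N-1)x_d}{N}\ (1\le j\le d-1),\qquad \lambda_0=1-\sum_{j\ge1}\lambda_j .
\end{equation*}
Taking $x_d=1$ and reducing modulo $\Z$, we get $\lambda_j\equiv \tfrac1N$ for $1\le j\le d$ and $\lambda_0\equiv -\tfrac dN$. Hence every element of $\Gamma$ is
\begin{equation*}
\b x_m=\Bigl(\operatorname{frac}\bigl(-\tfrac{dm}{N}\bigr),\tfrac mN,\dots,\tfrac mN\Bigr),\qquad 0\le m\le N-1 .
\end{equation*}

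\textbf{Step 2: interior elements and their ages.} The element $\b x_m$ lies in $(0,1)^{d+1}$ exactly when $m\neq 0$ and $N\nmid dm$. For such $m$,
\begin{equation*}
\operatorname{age}(\b x_m)=\frac{dm}{N}+\operatorname{frac}\Bigl(-\frac{dm}{N}\Bigr)=\Bigl\lceil \frac{dm}{N}\Bigr\rceil .
\end{equation*}
It remains to count, for each $1\le i\le d$, how many admissible $m$ have $\lceil dm/N\rceil=i$.

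\textbf{Step 3: the case $N=dk$.} Here $dm/N=m/k$, so the excluded values are the multiples $m=k,2k,\dots,(d-1)k$. Each interval $\bigl((i-1)k,ik\bigr)$ with $1\le i\le d$ then contains exactly $k-1$ admissible $m$, all with age $i$. This gives $(k-1)\sum_{i=1}^d t^i$.

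\textbf{Step 4: the case $N=dk+1$.} Since $\gcd(d,dk+1)=1$ and $0<m<N$, no $m$ is excluded. The $m$ with age $i$ are the integers in $\bigl((i-1)N/d,\, iN/d\bigr]$, also intersected with $m\le dk$. Their number is
\begin{equation*}
\Bigl\lfloor ik+\tfrac id\Bigr\rfloor-\Bigl\lfloor (i-1)k+\tfrac{i-1}d\Bigr\rfloor = k \qquad\text{for } 1\le i\le d-1 .
\end{equation*}
For $i=d$ the interval contains $(d-1)k+1,\dots,dk+1$. Removing $m=dk+1=N$, which is not an allowed value, leaves exactly $k$ values. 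This gives $k\sum_{i=1}^d t^i$.

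The main point needing care is Step 1: getting the sign and normalization of $\overline{\b a}$ right, namely $\lambda_0\equiv -d/N$. The only other delicate spot is the boundary count at $i=d$ in Step 4. As a sanity check, $\ell^\ast(1)$ should equal the number of interior elements of $\Gamma$: this is $N-1-(d-1)=d(k-1)$ when $N=dk$, and $N-1=dk$ when $N=dk+1$. Both match the claimed formulas.
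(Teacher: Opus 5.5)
Your proof is correct. For $N=dk$ it is essentially the paper's argument. The paper also writes $\b x_m=\bigl(\operatorname{frac}(m(k-1)/k),\tfrac mN,\dots,\tfrac mN\bigr)$, which is your $\operatorname{frac}(-dm/N)=\operatorname{frac}(-m/k)$. It discards the multiples of $k$ and, for $m=qk+r$, gets age $q+1=\lceil m/k\rceil$. Your vector, taken at height one with $x_d=1$, is $\overline{\b a}+e_0$ rather than the last row of $A^{-1}$, but it represents the same class in $\Gamma$.

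The genuine difference is the case $N=dk+1$. The paper does not compute there. It observes that $h^\ast_{\polytope{S}_{\b a}}(t)=1+k\sum_{j=1}^d t^j$ from \Cref{thm:h_vector_OneR} is shifted symmetric, and invokes \cite[Prop.~2.17]{Bajo2025LocalF} to read off $\ell^\ast_{\polytope{S}_{\b a}}(t)=h^\ast_{\polytope{S}_{\b a}}(t)-1$. Your route instead uses the uniform formula $\operatorname{age}(\b x_m)=\lceil dm/N\rceil$, the observation $\gcd(d,dk+1)=1$ to see that every nonzero $\b x_m$ is interior, and a floor count.

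The paper's route is shorter and connects to the shifted-symmetry literature. However, it depends on the unimodular triangulation, through the $h^\ast$ computation, and on an external result. Yours is self-contained and treats both cases identically. Since your Step 1 never uses the form of $N$, it in fact gives $\ell^\ast_{\polytope{S}_{\b a}}(t)=\sum_{0<m<N,\;N\nmid dm}t^{\lceil dm/N\rceil}$ for every $N>1$. This includes the values of $N$ for which no unimodular triangulation exists.
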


\begin{proof}
First assume $N=dk$. 
Let $A$ be the matrix whose rows are $(1,\b v_i)$, where
$\b v_0,\ldots,\b v_d$ are the vertices of $\polytope{S}_{\b a}$.
We use \eqref{eq:local_h_S} to compute the local $h^\ast$-polynomial. First, as easy computation shows that the last row $\overline{\b a}$ of $A^{-1}$ is given by
\[
\overline{\b a}
=
\left(
\frac{m(N-1-k)}{N},
\frac{(1-N)}{N},
\ldots,
\frac{(1-N)}{N},
\frac{1}{N}
\right)
\]
which directly implies that the elements of $\Gamma(\polytope{S}_{\b a})$ are of the form

\[
\b x_m= 
m\overline{\b a}
=
\left(
\frac{md(N-1-k)}{N},
\frac{m(1-N)}{N},
\ldots,
\frac{m(1-N)}{N},
\frac{m}{N}
\right) \quad \text{for } 0\leq m\leq N-1.
\]
Since \(N=dk\), we have
\[
\frac{d(N-1-k)}{N}
=
\frac{k-1}{k}+d-2
\qquad\text{and}\qquad
\frac{1-N}{N}
=
\frac1N-1.
\]
Using that $0\leq m\leq N-1$  we conclude that  
\[
\b x_m
=
\left(
\operatorname{frac}\left(\frac{m(k-1)}{k}\right),
\frac{m}{N},
\ldots,
\frac{m}{N}
\right).
\]

If \(k\mid m\), then
$
\operatorname{frac}\left(\frac{m(k-1)}{k}\right)=0$, 
so \(\b x_m\notin(0,1)^{d+1}\) and \(\b x_m\) does not contribute to the local
\(h^\ast\)-polynomial.

If \(k\nmid m\), there exist $0\leq q\leq d-1$ and $1\leq r\leq k-1$ such that $m=qk+r$ and it follows that 
$
\operatorname{frac}\left(\frac{m(k-1)}{k}\right)
=
\operatorname{frac}\left(\frac{r(k-1)}{k}\right)
=
1-\frac{r}{k}$. 
Combining the previous arguments, an easy computation shows
\[
\operatorname{age}(\b x_m)
=
1-\frac{r}{k}
+
d\frac{m}{N}
=
q+1.
\]
Finally, using \eqref{eq:local_h_S}, we obtain 
 
\[
\ell^\ast_{\polytope{S}_{\b a}}(t)
=
\sum_{q=0}^{d-1}(k-1)t^{q+1}
=
(k-1)\sum_{i=1}^d t^i,
\]
as claimed.

If $N=dk+1$, then \Cref{thm:h_vector_OneR} implies that $h^{\ast}_{\polytope{S}_{\b a}}(t)$ is shifted symmetric and the claim directly follows  from \cite[Prop. 2.17]{Bajo2025LocalF}.
\end{proof}

\subsection{The Ehrhart polynomial}

Let $\polytope{S}_{\b a}$ be the one-row Hermite normal form simplex associated to $\b a = (N - 1, \ldots, N - 1, N) \in \N^d$. 
In this section, we compute explicitly the Ehrhart polynomial of $\polytope{S}_{\b a}$ when $N\in\{dk,dk+1\}$, and analyze its positivity and unimodality properties.

\begin{theorem}\label{thm:Ehrhar_one_row}
Let $\polytope{S}_{\b a}$ be the one-row Hermite normal form simplex associated with $\b a = (N - 1, \dots, N - 1, N)\in\N^d$.  
If $N = d k$ or $N = d k + 1$ for some $k \in \N$,
then $\polytope{S}_{\b a}$ is Ehrhart positive.
\end{theorem}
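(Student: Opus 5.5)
Using the $h^\ast$-polynomials from \Cref{thm:h_vector_OneR}, I will write the Ehrhart polynomial through the standard inversion
$$
\mathcal L_{\polytope{S}_{\b a}}(t)=\sum_{j=0}^{d} h^\ast_j\binom{t+d-j}{d},
$$
and then regroup the terms into the polynomials $f$ and $g$ of \Cref{lem:Binomial_difference}. Their coefficient signs are already known from that lemma.

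\textbf{Case $N=dk+1$.} Here $h^\ast=1+k(t+\dots+t^d)$. Reindexing gives $\sum_{j=1}^{d}\binom{t+d-j}{d}=\sum_{i=0}^{d-1}\binom{t+i}{d}$. By \Cref{lem:Sum_binomial}, this sum equals $g(t)=\binom{t+d}{d+1}-\binom{t}{d+1}$. Hence
$$
\mathcal L_{\polytope{S}_{\b a}}(t)=\binom{t+d}{d}+k\,g(t).
$$
The coefficients of $\binom{t+d}{d}=\frac{1}{d!}\prod_{j=1}^d(t+j)$ are all strictly positive, since it is a product of linear factors with positive coefficients. The coefficients of $g$ are nonnegative by \Cref{lem:Binomial_difference}. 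So every coefficient of $\mathcal L$ in degrees $0,\dots,d$ is positive.

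\textbf{Case $N=dk$.} Here $h^\ast=1+k(t+\dots+t^{d})-t^d$. Applying the same computation to the $k(t+\dots+t^d)$ part, the term $-t^d$ contributes $-\binom{t}{d}$. Therefore
$$
\mathcal L_{\polytope{S}_{\b a}}(t)=\binom{t+d}{d}-\binom{t}{d}+k\,g(t)=f(t)+k\,g(t).
$$
The degree-$d$ coefficient comes only from $k\,g$. It equals $k\cdot\frac{2}{(d+1)!}\cdot\binom{d+1}{2}>0$, which is consistent with $\nVol/d!=dk/d!$. In degrees $0,\dots,d-1$, \Cref{lem:Binomial_difference} shows that $f$ has strictly positive coefficients, while $g$ has nonnegative coefficients. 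So all coefficients of $\mathcal L$ are positive.

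The main obstacle is carrying out the re-indexing and the boundary terms of the inversion formula correctly, especially the lone $-t^d$ term in the case $N=dk$. I will check the result by confirming that the leading coefficient equals $N/d!$ and that the constant term equals $1$. The sign analysis itself is already supplied by \Cref{lem:Binomial_difference}.
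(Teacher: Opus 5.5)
Your proposal is correct and follows the paper's proof essentially verbatim. Like the paper, you expand $h^\ast$ in the binomial basis, collapse the sum via \Cref{lem:Sum_binomial} to $\binom{t+d}{d}+k\,g(t)$, respectively $f(t)+k\,g(t)$, and read off the signs from \Cref{lem:Binomial_difference}; your explicit check that the $t^d$-coefficient of $k\,g$ equals $dk/d!=N/d!$ usefully pins down the one coefficient that $f$, having degree $d-1$, cannot supply in the case $N=dk$.
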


\begin{proof}
First, suppose that $N = d k + 1$.  
Using \Cref{thm:h_vector_OneR}, and the usual expansion of the Ehrhart polynomial in the binomial basis (see e.g., \cite[Lemma 3.14]{beck2015computing}) we get

\begin{align*}
    \mathcal{L}_{\polytope{S}_{\b a}}(t) = \binom{t + d}{d} + k \sum_{j = 0}^{d-1} \binom{t + j}{d} 
    = \binom{t + d}{d} + k \left( \binom{t + d}{d + 1} - \binom{t}{d + 1} \right),
\end{align*}
where the second equality follows from \Cref{lem:Sum_binomial}.
As $\binom{t + d}{d}$ is a polynomial of degree $d$ with strictly positive coefficients and as  $\binom{t + d}{d + 1} - \binom{t}{d + 1}$ is a polynomial of degree $d$ with nonnegative coefficients by \Cref{lem:Binomial_difference}, it follows that all coefficients of $\mathcal{L}_{\polytope{S}_{\b a}}(t)$ are positive, and hence $\polytope{S}_{\b a}$ is Ehrhart positive.

Let $N = d k$ for some positive integer $k$.  By the same arguments as in the previous case it follows that 
\begin{align*}
\mathcal{L}_{\polytope{S}_{\b a}}(t)
    = \binom{t + d}{d} - \binom{t}{d} + k \left( \binom{t + d}{d + 1} - \binom{t}{d + 1} \right).
\end{align*}
The claim follows again by applying \Cref{lem:Binomial_difference}.
\end{proof}

\begin{remark}\label{rem:Ehrhart_positive_not_unimodal}
By \Cref{lem:Binomial_difference}, the difference $\binom{t + d}{d + 1} - \binom{t}{d + 1}$ is a polynomial of degree $d$, and if $m$ is an odd integer with $1\leq m \leq d$, then the coefficient $c_{d - m}$ of $t^{d - m}$ in this polynomial satisfies $c_{d - m} = 0$.
This implies that, for $\b a = (N - 1, \dots, N - 1, N)$ with $N \in\{d k,dk+1\}$, the coefficient $ \ell_{d - j}$ of $t^{d-j}$ in  the Ehrhart polynomial $\mathcal{L}_{\polytope{S}_{\b a}}(t)$ is independent of $k$ whenever $j$ is odd and $1 \leq j \leq d$.
\end{remark}

we illustrate this behavior in an example. 
\begin{example}\label{ex:Ehrhart_positive}
Let $\b a = (5, 5, 5, 5, 5, 6)$ and let $\polytope{S}_{\b a}$ be the Hermite normal form simplex associated with $\b a$, where $N = 6$.  
Then the Ehrhart polynomial equals
$$
\mathcal{L}_{\polytope{S}_{\b a}}(t) = \frac{1}{120} t^6 + \textcolor{blue}{\frac{1}{20}} t^5 + \frac{5}{12} t^4 + \textcolor{blue}{\frac{4}{3}} t^3 + \frac{103}{40} t^2 + \textcolor{blue}{\frac{157}{60}} t + 1.
$$

Now, consider $\b a' = (41, 41, 41, 41, 41, 42)$ and let $\polytope{S}_{\b a'}$ be the associated Hermite normal form simplex with $N = 42$.  
Then the Ehrhart polynomial equals
$$
\mathcal{L}_{\polytope{S}_{\b a'}}(t) = \frac{7}{120} t^6 + \textcolor{blue}{\frac{1}{20}} t^5 + \frac{13}{6} t^4 + \textcolor{blue}{\frac{4}{3}} t^3 + \frac{271}{40} t^2 + \textcolor{blue}{\frac{157}{60}} t + 1.
$$

In both cases, the coefficients of $t^5$, $t^3$, and $t$ are the same.
\end{example}


We now show that though  $\polytope{S}_{\b a}$ is Ehrhart positive it is not unimodal, if $k$ is large enough, where again $\b a=(N-1,\ldots,N-1,N)$ with $N\in \{kd,kd+1\}$. 
\begin{theorem}\label{thm:explicit_non_unimodality_bound}
Let $d\geq3$, and let 
$h(t)\coloneqq\binom{t+d}{d}=\sum_{j=0}^d h_jt^j$, $f(t)\coloneqq\binom{t+d}{d}-\binom{t}{d}=\sum_{j=0}^d f_jt^j$ and $g(t)\coloneqq\binom{t+d}{d+1}-\binom{t}{d+1}=\sum_{j=0}^d g_jt^j$. 
For $m\in\{f,h\}$, set
$$
K_m(d)\coloneqq
\max\left\{1,\,
1+\left\lfloor
\max\left\{
\frac{\ell_{r+1}-\ell_r}{g_r},
\frac{\ell_{r+1}-\ell_{r+2}}{g_{r+2}}
\right\}
\right\rfloor\right\}
$$
where $r=1$ if $d$ is odd, and $r=2$ if $d$ is even. 
Let $\polytope{S}_{\b a}$ be the one-row Hermite normal form simplex  associated with
$\b a=(N-1,\ldots,N-1,N)$. If $N=dk+1$ and $N=dk$, respectively, and $k\geq K_h(d)$ and $K_f(d)$, respectively, then  Ehrhart polynomial of $\polytope{S}_{\b a}$ is not unimodal.
\end{theorem}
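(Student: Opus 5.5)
From the proof of \Cref{thm:Ehrhar_one_row}, the Ehrhart polynomial has the form $\mathcal{L}_{\polytope{S}_{\b a}}(t)=\ell(t)+k\,g(t)$. Here $\ell=h$ when $N=dk+1$ and $\ell=f$ when $N=dk$. So the coefficient of $t^j$ is $L_j=\ell_j+k g_j$.

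By \Cref{lem:Binomial_difference}, $g_j>0$ exactly when $1\le j\le d$ and $j\equiv d \pmod 2$, and $g_j=0$ otherwise. The index $r$ in the statement is chosen with $r\equiv d\pmod 2$ and $r\ge1$. Then $r+1\not\equiv d$ and $r+2\equiv d$, so $g_{r+1}=0$ while $g_r,g_{r+2}>0$.

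The plan is to exhibit a strict valley $L_r>L_{r+1}<L_{r+2}$ among the coefficients. A unimodal sequence has no strict interior valley, so this forces non-unimodality. We need $r+2\le d$. For $d$ odd, $r=1$ gives $r+2=3\le d$ since $d\ge3$. For $d$ even, $r=2$ gives $r+2=4\le d$ as long as $d\ge4$; the case $d=3$ is odd, so $d\ge4$ holds whenever $d$ is even. Both cases are fine.

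By \Cref{rem:Ehrhart_positive_not_unimodal}, $L_{r+1}=\ell_{r+1}$ does not depend on $k$. The valley condition therefore reads
\[
k g_r>\ell_{r+1}-\ell_r \qquad\text{and}\qquad k g_{r+2}>\ell_{r+1}-\ell_{r+2}.
\]
Set $M=\max\{(\ell_{r+1}-\ell_r)/g_r,\ (\ell_{r+1}-\ell_{r+2})/g_{r+2}\}$. Any integer $k\ge 1+\lfloor M\rfloor$ satisfies $k>M$, which gives both strict inequalities. The outer $\max\{1,\cdot\}$ only ensures $k\ge1$, so that $N\in\{dk,dk+1\}$ is a legitimate case of the theorem.

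Substituting $m=h$ for $N=dk+1$ and $m=f$ for $N=dk$ then yields exactly the thresholds $K_h(d)$ and $K_f(d)$. I do not expect a real obstacle. The only points needing care are the parity bookkeeping that guarantees $g_{r+1}=0$ with $g_r,g_{r+2}>0$, and the check $r+2\le d$. Both follow from \Cref{lem:Binomial_difference} and $d\ge3$.
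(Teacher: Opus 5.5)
Your proposal is correct and follows essentially the same route as the paper: write the Ehrhart polynomial as $h+kg$ (resp.\ $f+kg$), use \Cref{lem:Binomial_difference} to get $g_r,g_{r+2}>0=g_{r+1}$, and observe that $k\ge K_m(d)$ forces the strict valley $L_r>L_{r+1}<L_{r+2}$. Your explicit checks that $r+2\le d$ and that $k\ge 1+\lfloor M\rfloor$ implies $k>M$ are details the paper leaves implicit.
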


\begin{proof}
By \Cref{lem:Binomial_difference}, we have $g_r>0$, $g_{r+1}=0$, and
$g_{r+2}>0$ for the stated choice of $r$. If $N=dk+1$, it follows from  the proof of
\Cref{thm:Ehrhar_one_row} that
$$
\mathcal L_{\polytope{S}_{\b a}}(t)=h(t)+kg(t).
$$
Writing $\mathcal L_{\polytope{S}_{\b a}}(t)=\sum_{j=0}^d\ell_jt^j$, the
definition of $K_h(d)$ implies
$$
\ell_r=h_r+kg_r>h_{r+1}=\ell_{r+1}
<h_{r+2}+kg_{r+2}=\ell_{r+2},
$$
which shows that $\mathcal L_{\polytope{S}_{\b a}}(t)$ is not unimodal.  If $N=dk$, the same proof applied to 
$$
\mathcal L_{\polytope{S}_{\b a}}(t)=f(t)+kg(t),
$$
with $K_f(d)$ in place of $K_h(d)$ shows the claim.
\end{proof}

\begin{remark}\label{rem:Non_unimodality_one_row}
The bounds in \Cref{thm:explicit_non_unimodality_bound} are explicit sufficient
bounds and are not asserted to be optimal.  The two families can have different
optimal thresholds; for example, in dimension $3$ the first non-unimodal cases
occur at $k=4$ for $N=3k$ and at $k=2$ for $N=3k+1$.
\end{remark}
%

\section{Two-row Hermite normal form simplex}
\label{sect:2row}

In this section, we extend the analysis from the precious sections to two-row Hermite normal form simplices. 
We consider sequences $\b b = (M-1, \ldots, M-1, M, 0)\in \N^d$ and $\b a = (1, \ldots, 1, N)\in \N^d$, and prove \Cref{thm:C}, which provides conditions on $N$ and $M$ that guarantee the existence of a unimodular triangulation. 
For $M\leq d$, we characterize completely when these simplices have the integer decomposition property.  In the positive cases we
compute both the $h^{\ast}$-polynomial and the local $h^{\ast}$-polynomial and
study Ehrhart positivity and unimodality.

\begin{definition}
We say that a simplex $\polytope{S}$ is in \defn{two-row Hermite normal form} if its Hermite normal form matrix $H$ satisfies $a_{i,i} = 1$ for all $1 \leq i \leq d-2$.
In this case, we refer to such a simplex using only the last  two rows of $H$: 
$$
\b b = (b_1, \dots, b_{d-2}, M, 0) \quad \text{and} \quad \b a = (a_1, \dots, a_{d-2}, a_{d-1}, N),
$$
where $a_i\coloneqq a_{d,i} = a_i$, $a_{d,d} = N$, $b_i\coloneqq b_{d-1,i}$,  and $b_{d-1,d-1}=M $. We will write $\polytope{S}_{\b a, \b b}$ for the corresponding Hermite normal form simplex.
\end{definition}
Since, by definition, the  vertices of $\polytope{S}_{\b a, \b b}$ are given by the rows of the matrix $H$, we have
$$
\polytope{S}_{\b a, \b b} = \conv\left\{
\underbrace{(0, \dots, 0)}_{\b v_0},\,
\underbrace{(1, 0, \dots, 0)}_{\b v_1},\,
\dots,\,
\underbrace{(b_1, \dots, M, 0)}_{\b v_{d-1}},\,
\underbrace{(a_1, \dots, a_{d-1}, N)}_{\b v_d}
\right\}.
$$
It is straightforward, that  $\polytope{S}_{\b a, \b b}$ has normalized volume equal to $M\times N$ (see also \cite[Proposition $2.5$]{Bajo2025LocalF}).

\begin{example}\label{ex:Matrix_2H_A}
Suppose $\b a = (2, 3, 4, 4, 5)$ and $\b b = (1, 4, 3, 7, 0)$, so that $N = 5$ and $M = 7$.
Then, the corresponding Hermite normal form matrix is

\begin{center}
$H = \begin{pmatrix}
0 & 0 & 0 & 0 & 0 \\
1 & 0 & 0 & 0 & 0 \\
0 & 1 & 0 & 0 & 0 \\
0 & 0 & 1 & 0 & 0 \\
1 & 4 & 3 & 7 & 0 \\
2 & 3 & 4 & 4 & 5
\end{pmatrix}$
\end{center}
\end{example}

We now state a simple lemma that will be helpful.

\begin{corollary}\label{coro:Binomial_difference_2}
Let $d \geq 2$ be an integer and let $\tilde{f}(t) = \binom{t + d}{d} - \binom{t + 1}{d}$ and $\tilde{g}(t) = \binom{t + d}{d + 1} - \binom{t + 1}{d + 1}$ be polynomials in the variable $t$.
$\tilde f(t)$ has degree $d-1$ and all its coefficients are strictly positive. 
Moreover, $\tilde g(t)$ has degree $d$, zero constant term, and all its remaining coefficients are strictly positive.
\end{corollary}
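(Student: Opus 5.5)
We follow the proof of \Cref{lem:Binomial_difference}, clearing denominators and comparing the coefficients of two products via elementary symmetric polynomials. For $\tilde f$ we write
\[
d!\,\tilde f(t)=\prod_{j=1}^{d}(t+j)-\prod_{j=-1}^{d-2}(t-j)=\prod_{j=1}^{d}(t+j)-(t+1)\,t\prod_{j=1}^{d-2}(t-j).
\]
With $S=\{1,\dots,d\}$ and $U=\{-1,0,1,\dots,d-2\}$, the coefficient of $t^{d-i}$ is $e_i(S)-(-1)^i e_i(U)$. The leading coefficients cancel, so $\deg\tilde f\le d-1$.

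For the lower coefficients we do not use $U$ directly, since it contains $-1$. Instead we factor the second product as $t(t+1)Q(t)$ with $Q(t)=\prod_{j=1}^{d-2}(t-j)$, and write the first product as $(t+1)R(t)$ with $R(t)=\prod_{j=2}^{d}(t+j)$. This gives
\[
d!\,\tilde f(t)=(t+1)\bigl(R(t)-tQ(t)\bigr).
\]
Now $R(t)-tQ(t)$ has exactly the shape treated in \Cref{lem:Binomial_difference}, with $S'=\{2,\dots,d\}$ and $T'=\{1,\dots,d-2\}$. Its coefficient of $t^{d-1-i}$ equals $e_i(S')-(-1)^ie_i(T')$. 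For odd $i$ this is a sum of two nonnegative terms, and $e_i(S')>0$ for $1\le i\le d-1$. For even $i$ we compare termwise: every element of $S'$ exceeds the corresponding element of $T'$ after the shift $j\mapsto j+1$, so $e_i(S')>e_i(T')$ whenever $e_i(S')\neq 0$, and $e_{d-1}(T')=0$. Hence $R-tQ$ has degree $d-2$ with all coefficients strictly positive. Multiplying by $t+1$ preserves strict positivity and raises the degree to $d-1$. The edge case $d=2$ is checked directly: $\tilde f(t)=t+1$.

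For $\tilde g$ we argue similarly:
\[
(d+1)!\,\tilde g(t)=\prod_{j=0}^{d}(t+j)-\prod_{j=-1}^{d-1}(t-j)=t(t+1)\Bigl(\prod_{j=2}^{d}(t+j)-\prod_{j=1}^{d-1}(t-j)\Bigr).
\]
Set $R(t)=\prod_{j=2}^{d}(t+j)$ and $Q'(t)=\prod_{j=1}^{d-1}(t-j)$. Both are monic of degree $d-1$, so the bracket has degree at most $d-2$. Its coefficient of $t^{d-1-i}$ for $i\ge1$ is $e_i(\{2,\dots,d\})-(-1)^ie_i(\{1,\dots,d-1\})$. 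Odd $i$ give a sum of positive terms. Even $i$ give a positive difference, since $\{2,\dots,d\}$ dominates $\{1,\dots,d-1\}$ elementwise under the bijection $j\mapsto j+1$, and all $e_i$ of both sets are positive for $i\le d-1$. Thus the bracket has degree exactly $d-2$ with strictly positive coefficients. Multiplying by $t(t+1)$ yields a polynomial of degree $d$, zero constant term, and strictly positive coefficients in degrees $1,\dots,d$.

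The only delicate point is the strict inequality $e_i(S')>e_i(T')$ for even $i$ in the dominated-set comparison. It follows because each monomial $\prod_{j\in I}(j+1)$ strictly exceeds $\prod_{j\in I}j$ for nonempty $I\subseteq T'$, so we expect no real obstacle.
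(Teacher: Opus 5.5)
Your proof is correct, but it takes a different route from the paper.

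\textbf{The paper's route.} The paper factors nothing out. It keeps the full differences $d!\,\tilde f(t)$ and $(d+1)!\,\tilde g(t)$ and writes the root sets as $U\cup\{d-1,d\}$ versus $U\cup\{-1,0\}$ (respectively $V\cup\{0,d\}$ versus $V\cup\{-1,0\}$), where $U=\{1,\dots,d-2\}$ and $V=\{1,\dots,d-1\}$. It then applies the two-element identity $e_i(U\cup\{x,y\})=e_i(U)+(x+y)e_{i-1}(U)+xy\,e_{i-2}(U)$. This gives closed-form coefficients such as $2d\,e_{i-1}(U)+d(d-1)e_{i-2}(U)$ and $(d+1)e_{i-1}(V)$ for even $i$. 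These are visibly nonnegative, so no inequality between symmetric functions is ever needed.

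\textbf{Your route.} You factor out the common roots $-1$ (and $0$ for $\tilde g$), obtaining $d!\,\tilde f=(t+1)\bigl(R-tQ\bigr)$ and $(d+1)!\,\tilde g=t(t+1)\bigl(R-Q'\bigr)$. You then handle the even-index coefficients of the reduced difference with a monotonicity fact: $e_i$ strictly increases when positive entries are increased elementwise, here via the shift $j\mapsto j+1$.

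\textbf{What each buys.}
\begin{itemize}
\item Your factorization makes the degree drop and the vanishing constant term of $\tilde g$ structurally transparent. It also shows in passing that $\tilde f(-1)=\tilde g(-1)=\tilde g(0)=0$. The cost is that you obtain only positivity, not explicit coefficient values.
\item The paper's version gives an exact formula for every coefficient. That would be the more useful form if one wanted quantitative thresholds like those in the non-unimodality theorem.
\end{itemize}

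\textbf{Details.} You handle the edge cases correctly: the even index $i=d-1$ for $\tilde f$, where $e_{d-1}(T')=0$, and the case $d=2$. One small imprecision: the phrase ``$e_i(S')>e_i(T')$ whenever $e_i(S')\neq 0$'' fails at $i=0$, where the two are equal. That index is exactly the cancelling leading term you treated separately, so the argument is unaffected.
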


\begin{proof}
First note that, by definition, we have
\[
d!\tilde f(t)
=
\prod_{j=1}^d(t+j)
-
\prod_{j=-1}^{d-2}(t-j).
\]
Set \(U=\{1,\ldots,d-2\}\). Using
\begin{equation}\label{sym}
e_i(U\cup\{x,y\})
=
e_i(U)+(x+y)e_{i-1}(U)+xy\,e_{i-2}(U),
\end{equation}
the coefficient of \(t^{d-i}\) in \(d!\tilde f(t)\) equals
\[
\begin{cases}
2d\,e_{i-1}(U)+d(d-1)e_{i-2}(U),
    & \text{ if } i \text{ is even},\\[2mm]
2e_i(U)+(2d-2)e_{i-1}(U)+d(d-1)e_{i-2}(U),
    & \text{ if }i \text{ is odd}.
\end{cases}
\]
For \(i=0\), this coefficient is zero, while it is strictly positive for
\(1\leq i\leq d\). Hence \(\tilde f(t)\) has degree \(d-1\) and all its
coefficients are strictly positive.

Similarly, we have
\[
(d+1)!\tilde g(t)
=
\prod_{j=0}^d(t+j)
-
\prod_{j=-1}^{d-1}(t-j).
\]
Set \(V=\{1,\ldots,d-1\}\). Using \eqref{sym} with $V$ instead of $U$, we get that the coefficient of \(t^{d+1-i}\) in \((d+1)!\tilde g(t)\) equals
\[
\begin{cases}
(d+1)e_{i-1}(V),
    & \text{ if }i \text{ is even},\\[2mm]
2e_i(V)+(d-1)e_{i-1}(V),
    & \text{ if }i \text{ is odd}.
\end{cases}
\]
This coefficient vanishes for \(i=0\) and \(i=d+1\), and is strictly
positive for \(1\leq i\leq d\). Therefore, \(\tilde g(t)\) has degree \(d\),
zero constant term, and all its remaining coefficients are strictly
positive.
\end{proof}


\subsection{The integer decomposition property for $\polytope{S}_{\b a, \b b}$}


The goal of this section is to provide a natural extension of the unimodular triangulation presented in \Cref{subsection_UT_1row}.
We start with the description of the lattice points in some special cases.

\begin{proposition}\label{prop:Interio_points_two}
Let $\b a = (1, \dots, 1,\, N)\in \mathbb{N}^d$, $\b b = (M-1, \ldots, M-1, M,0)\in \mathbb{N}^d$ and let $\polytope{S}_{\b a, \b b}$ be the Hermite normal form simplex associated with $\b a$ and $\b b$.  
If $M\in\{d-1,d\}$ and $N \in \Z_{>1}$, then $\polytope{S}_{\b a, \b b}$ contains exactly its vertices and the points $\b p_i = (1, \dots,1, i)$ for $0 \leq i \leq N - 1$ as lattice points.
\end{proposition}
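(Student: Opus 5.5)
We argue via barycentric coordinates, as in the proofs of \Cref{prop:Interio_points} and \Cref{pro:lattice_points_1N}. The vertices are $\b v_0=\b 0$, $\b v_j=\b e_j$ for $1\leq j\leq d-2$, $\b v_{d-1}=(M-1,\ldots,M-1,M,0)$ and $\b v_d=(1,\ldots,1,N)$. The proof has two halves: the points $\b p_i$ lie in $\polytope{S}_{\b a,\b b}$, and no other non-vertex lattice point does.

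\emph{Containment.} For $0\leq i\leq N-1$ we write $\b p_i$ as an affine combination with
$$\lambda_d=\tfrac iN,\qquad \lambda_1=\cdots=\lambda_{d-1}=\tfrac{1}{M}\Bigl(1-\tfrac iN\Bigr).$$
This choice is forced by comparing coordinates. Indeed, $x_{d-1}=M\lambda_{d-1}+\lambda_d$ and $x_j=\lambda_j+(M-1)\lambda_{d-1}+\lambda_d$ for $j\leq d-2$. The remaining coefficient is
$$\lambda_0=1-\lambda_d-(d-1)\lambda_{d-1}=\Bigl(1-\tfrac iN\Bigr)\Bigl(1-\tfrac{d-1}{M}\Bigr),$$
which is nonnegative exactly because $M\geq d-1$. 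All other coefficients are clearly nonnegative, so each $\b p_i$ lies in $\polytope{S}_{\b a,\b b}$.

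\emph{Exhaustion.} Let $\b x$ be a lattice point of $\polytope{S}_{\b a,\b b}$ that is not a vertex. Write $\b x=\sum\lambda_j\b v_j$ with all $\lambda_j\in[0,1)$. From the formulas above, $x_j-x_{d-1}=\lambda_j-\lambda_{d-1}$ for $j\leq d-2$. This is an integer in $(-1,1)$, so $\lambda_j=\lambda_{d-1}=:\mu$. Hence $x_1=\cdots=x_{d-1}=c:=M\mu+\lambda_d\in\Z_{\geq0}$, and $\lambda_d=x_d/N<1$. Substituting $\mu=(c-\lambda_d)/M$ into the sum of the coefficients gives
$$\lambda_0=1-\lambda_d-\tfrac{d-1}{M}(c-\lambda_d).$$
We now split according to the value of $c$.
\begin{itemize}
\item If $c=0$, then $\mu\geq0$ forces $\lambda_d=0$, so $\b x=\b 0=\b v_0$. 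This is a contradiction.
\item If $c\geq2$, then $M\leq d$ gives $\frac{d-1}{M}\geq\frac{d-1}{d}$. Therefore
$$\lambda_0\leq 1-\lambda_d-\tfrac{d-1}{d}(2-\lambda_d)=\frac{2-d-\lambda_d}{d}.$$
This is negative unless $d=2$ and $\lambda_d=0$. In that degenerate case either $M=1$, where directly $\lambda_0=1-c<0$, or $M=2$, where $\lambda_0\geq0$ forces $\b x=(2,0)=\b v_1$, a vertex. Both outcomes are excluded.
\item If $c=1$, then $\b x=(1,\ldots,1,x_d)$ with $0\leq x_d\leq N-1$, that is, $\b x=\b p_{x_d}$.
\end{itemize}

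The main obstacle is the exclusion of $c\geq 2$. This is the only place where the upper bound $M\leq d$ is used, while the lower bound $M\geq d-1$ is used in the containment half. The small-dimensional edge case $d=2$ needs the separate check above, because there $\b v_{d-1}=(M,0)$ has no coordinates of the form $M-1$.
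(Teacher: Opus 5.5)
Your proof is correct and follows essentially the same route as the paper: you use barycentric coordinates with all $\lambda_j\in[0,1)$ and the integrality argument forcing $\lambda_j=\lambda_{d-1}$ and $x_1=\cdots=x_{d-1}$, then pin down $x_{d-1}=1$, and check containment by an explicit convex combination. The only difference is in how $x_{d-1}\neq 1$ is excluded. You use a single inequality valid for all $M\leq d$, which together with your containment computation also recovers the paper's remark that only vertices occur when $M<d-1$. The paper instead treats $M=d-1$ and $M=d$ separately via the exact identities $1=\lambda_0+x_{d-1}$ and $x_{d-1}=1-\lambda_0+\lambda_{d-1}$, which avoids your separate $d=2$ check.
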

Geometrically, the lattice points $\b p_i=(1,\ldots,1,i)$ of
$\polytope{S}_{\b a,\b b}$ lie on a distinguished line parallel to the last
coordinate axis.  We will use these points to build a triangulation via
successive coning.

\begin{proof}
Let $\b x = (x_1, \ldots, x_d)  \in \polytope{S}_{\b a, \b b}\cap \mathbb{Z}^d$ and assume that  $\b x \ne \b v_i$ for every vertex $\b v_i$ of $\polytope{S}_{\b a, \b b}$.
Then, writing $\b x$ as a convex combination $\b x = \sum_{i=0}^{d} \lambda_i \b v_i$ with $\sum_{i=0}^d \lambda_i = 1$ and $\lambda_i\geq 0$ for $0\leq i\leq d$, it follows that 
\begin{equation}\label{eq:IDP2}
x_j = \lambda_j + (M - 1)\lambda_{d-1} + \lambda_d \quad \text{for } 1 \leq j \leq d - 2,\quad
x_{d-1} = M\lambda_{d-1} + \lambda_d,\quad
x_d = N \lambda_d.
\end{equation}
This implies $x_j-x_{d-1}=\lambda_j-\lambda_{d-1}$ for  $1\leq j\leq d-2$. Since the left-hand side is an integer, whereas the right-hand side belongs to
\((-1,1)\), we conclude $\lambda_j=\lambda_{d-1}$ and $x_j=x_{d-1}$ for $1\leq j\leq d-2$. It follows that $1=
\lambda_0+(d-1)\lambda_{d-1}+\lambda_d$.

Suppose first that \(M=d-1\). Since $x_{d-1}=(d-1)\lambda_{d-1}+\lambda_d$, 
we obtain from the previous equation $1=\lambda_0+x_{d-1}$. 
As \(x_{d-1}\) is a nonnegative integer and \(0\leq\lambda_0<1\), this forces $x_{d-1}=1$ and $\lambda_0=0$.

Now suppose that \(M=d\). In this case, we get $1
=
\lambda_0+x_{d-1}-\lambda_{d-1}$,
and hence $x_{d-1}=1-\lambda_0+\lambda_{d-1}$.
As the right-hand side lies strictly between \(0\) and \(2\), while
\(x_{d-1}\) is an integer, we conclude $x_{d-1}=1$ and $\lambda_0=\lambda_{d-1}$.

In both cases, we have $x_1=\cdots=x_{d-1}=1$ and $x_d=N\lambda_d$. Since \(0\leq\lambda_d<1\) and $ x_d\in\mathbb{Z} $, it follows that 
$0\leq x_d\leq N-1$, which implies that $\b x$ is of the form $\b p_i=(1,\ldots,1,i)$, for $0\leq i\leq N-1$. 
It remains to show that $\b p_i\in \polytope{S}_{\b a,\b b}$.
It is easy to check that $\b p_0
=
\frac{1}{d-1}\sum_{j=1}^{d-1}\b v_j
$ and $\b p_0
=
\frac{1}{d}\sum_{j=0}^{d-1}\b v_j$, respectively,  if $M=d-1$ and $M=d$, respectively. 
Hence, \(\b p_0\in\polytope{S}_{\b a,\b b}\) in both cases. Moreover, 
$\b p_i
=
\left(1-\frac{i}{N}\right)\b p_0
+
\frac{i}{N}\b v_d$ for $0\leq i\leq N$ which shows $\b p_i\in \polytope{S}_{\b a,\b b}$.\end{proof}

\begin{remark}\label{remark:Interior_poins_2}
Suppose that $M = d - m$ for some $2\leq m \leq d-1$. 
We know that $ x_j \geq 1$ for all $1\leq j \leq d-1$.
It follows from the proof of \Cref{prop:Interio_points_two} that  $ 1 = \lambda_0 + (d - 1)\lambda_{d-1} + \lambda_d.$
Using $x_{d-1} = (d - m)\lambda_{d-1} + \lambda_d$, we can rewrite the equation as:
$$
1 = \lambda_0 + x_{d-1} + (m - 1)\lambda_{d-1}.
$$
Since $x_{d-1} \geq 1$, $\lambda_{d-1} \geq 0$ and $m \geq 2$, we conclude that $\lambda_0 < 0$, a contradiction.
Hence, no such lattice point $\b x$ can exist, and $\polytope{S}_{\b a, \b b}$ contains only its vertices as lattice points.
\end{remark}

We now recall \Cref{thm:C} from the introduction.
\begin{theorem}\label{thm_UT_2}
Let $\b a = (1, \dots, 1,\, N)\in \mathbb{Z}^d$ and $\b b = (M - 1, \dots, M - 1, M, 0)\in \mathbb{Z}^d$, and let $\polytope{S}_{\b a, \b b}$ be the Hermite normal form simplex associated with $\b a$ and $\b b$.  
If $M \in\{d - 1,d\}$, and $N \in \Z_{>1}$, then $\polytope{S}_{\b a, \b b}$ admits a regular and unimodular triangulation.
When $d=3$ and $M=d-1$, the triangulation can moreover be chosen flag.
\end{theorem}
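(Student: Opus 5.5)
We mimic the proof of \Cref{thm:Unimodular_triangulation}, using a collinear cone triangulation with distinguished vertex $\b v_d=(1,\ldots,1,N)$ and line $L=\{(1,\ldots,1,s):s\in\R\}$. By \Cref{prop:Interio_points_two}, the non-vertex lattice points of $\polytope{S}_{\b a,\b b}$ are exactly $\b p_i=(1,\ldots,1,i)$ for $0\le i\le N-1$, and together with $\b v_d=\b p_N$ they all lie on $L$. The base facet is $\Delta\coloneqq\conv(\b v_0,\ldots,\b v_{d-1})$, which lies in the hyperplane $x_d=0$ and contains $\b p_0$. Hence the hypotheses of \Cref{def:colinear_cone_triangulation} hold with $\polytope{P}_0=\Delta$, $\b v=\b v_d$ and $\mathcal I=\{\b p_1,\ldots,\b p_{N-1}\}$. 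The points are added in the order $\b p_1,\b p_2,\ldots,\b p_{N-1},\b v_d$, and $\b p_1$ is the one closest to $\Delta$.

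We next choose the base triangulation $\mathcal J$ of $\Delta$, which is a $(d-1)$-simplex in $\R^{d-1}\times\{0\}$. It is the one-row type simplex with last row $(M-1,\ldots,M-1,M)$ in dimension $d-1$, and its normalized volume is $M$.

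If $M=d$, we compute $\b p_0=\frac1d\sum_{j=0}^{d-1}\b v_j$, so $\b p_0$ lies in the relative interior of $\Delta$. We take $\mathcal J$ to be the star triangulation from $\b p_0$, which has $d=M$ maximal simplices.

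If $M=d-1$, then $\b p_0=\frac1{d-1}\sum_{j=1}^{d-1}\b v_j$ lies in the relative interior of the facet $F_0=\conv(\b v_1,\ldots,\b v_{d-1})$ of $\Delta$. We take $\mathcal J$ to be the stellar subdivision of $\Delta$ at $\b p_0$, whose maximal simplices are $\conv(\b v_0,\b p_0,F_0\setminus\{\b v_j\})$ for $1\le j\le d-1$. There are $d-1=M$ of them.

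In both cases $\mathcal J$ is a regular triangulation: a stellar subdivision of a single simplex is regular, obtained by lifting $\b p_0$ down. By \Cref{lem:Iterated_one_point_extension} and \Cref{remark:flag_regular}, the collinear cone triangulation $\mathcal T$ is then a regular triangulation of $\polytope{S}_{\b a,\b b}$.

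For unimodularity we count simplices. At each step the new apex $\b p_{i}$, and finally $\b v_d$, sees exactly the "upper" part of the current polytope. The first step cones all of $\mathcal J$ from $\b p_1$ and produces $M$ simplices. Each later step replaces the cap $\conv(\b p_{i-1},\partial\Delta\text{-part})$ by cones over the faces $\conv(\b p_{i-1},G)$, where $G$ runs over the boundary faces of $\mathcal J$ not containing $\b p_0$. So each later step also contributes $M$ maximal simplices. This mirrors the layer count in the proof of \Cref{thm:Unimodular_triangulation}: there are $N$ layers of $M$ simplices each, for a total of $MN$. Since $\nVol(\polytope{S}_{\b a,\b b})=MN$, \Cref{lem:volume_count_unimodular} gives unimodularity.

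The main obstacle is making this layer count rigorous. One must identify precisely the facets visible from $\b p_i$ in $\Delta_{i-1}=\conv(\Delta,\b p_1,\ldots,\b p_{i-1})$ and check that their induced triangulation has exactly $M$ maximal cells. The approach is to show that $\Delta_{i-1}=\conv(\Delta,\b p_{i-1})$. The visible facets are then $\conv(\b p_{i-1},\tau)$ for the facets $\tau$ of $\Delta$ that are visible from $\b v_d$ after projecting along $L$, that is, for all facets of $\Delta$ when $\b p_0$ is interior. In the case $M=d-1$, where $\b p_0$ lies on the boundary facet $F_0$, this needs care: the projection of $\b v_d$ along $L$ is $\b p_0$, which lies on $F_0$, so $F_0$ itself is not visible. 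The visible faces are then the pieces $\conv(\b v_0,\b p_0,\ldots)$, and in the triangulation induced from $\mathcal J$ there are again $d-1$ top cells.

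Alternatively, one can avoid the visibility analysis. We exhibit the $MN$ simplices $\conv(\b p_{i-1},\b p_i,G)$ directly, for $1\le i\le N$ and $G$ a maximal face of $\mathcal J$ not containing $\b p_0$. Each such simplex is checked to be unimodular by a determinant computation: $G$ together with $\b p_0$ is unimodular in $\Delta$, and $\b p_i-\b p_{i-1}=\b e_d$.

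Flagness for $d=3$ and $M=2$ remains. Here $\Delta$ is a triangle and $\mathcal J$ splits it into two triangles through the edge $\b v_0\b p_0$. This planar triangulation is flag, since its minimal non-faces are edges. So \Cref{remark:flag_regular} gives that $\mathcal T$ is flag.
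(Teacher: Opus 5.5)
For $M=d$ your argument is the paper's. For $M=d-1$ you take a different route, and it is correct modulo one point of care. You keep $\Delta=\conv(\b v_0,\ldots,\b v_{d-1})$ as the base facet and subdivide it stellarly at the boundary point $\b p_0\in\relint(F_0)$. The paper instead uses that $\polytope{S}_{\b a,\b b}$ is a pyramid with apex $\b v_0$ over $\polytope Q=\conv(\b v_1,\ldots,\b v_d)$. It builds the collinear cone triangulation of $\polytope Q$ over $\polytope H=F_0$, in which $\b p_0$ is a relative interior point, and then cones with $\b v_0$. Both constructions produce the same maximal cells $\conv(\b v_0,\b p_{i-1},\b p_i,F_0\setminus\{\b v_j\})$ for $1\le i\le N$ and $1\le j\le d-1$. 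So your count $N(d-1)$ and your flag check for $d=3$ are right.

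What the paper's detour buys is nondegeneracy. In your setup every new point $\b p_i$ lies on the hyperplane $\aff(\polytope Q)$, which supports the facet $\conv(\b p_{i-1},F_0)$ of the current polytope. Under the paper's literal definition of visibility, $\conv(\polytope F\cup\{\b x\})\cap\polytope P=\polytope F$, this coplanar facet does qualify as visible. \Cref{lem:Iterated_one_point_extension} would then add degenerate cones in which $\b p_i$ is collinear with $\b p_{k-1},\b p_k$. You therefore have to use the placing lemma with strict visibility ($\b x$ strictly beyond the facet hyperplane), which is valid in this coplanar situation. You sensed this, but your later description of the visible facets is off. For $i\ge 2$ they are $\conv(\b p_{i-1},\b v_0,F_0\setminus\{\b v_j\})$ for $1\le j\le d-1$, and these do not contain $\b p_0$. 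That matches your earlier, correct description as cones over the boundary cells of $\mathcal J$ avoiding $\b p_0$.

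Your route gives one uniform construction for both values of $M$ and gets regularity directly from placing. The paper's route applies only the nondegenerate one-point extensions and records the pyramid structure, which it reuses for $h^\ast$ and $\ell^\ast$.

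Finally, your alternative of checking $\det=\pm1$ for the listed simplices only replaces the volume count. It does not show that these simplices form a triangulation, so it does not let you bypass the visibility analysis.
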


\begin{proof}
By \Cref{prop:Interio_points_two}, the simplex $\polytope{S}_{\b a, \b b}$ contains no lattice points other than its vertices and the points $\b p_i=(1,\dots,1,i)$, $0\leq i\leq N-1$.

First assume that $M=d$.
Let $\polytope{F}\coloneqq\conv(\b v_0,\dots,\b v_{d-1})$. 
Since $\b p_0\in\relint(\polytope{F})$, the stellar subdivision of $\polytope{F}$ with center $\b p_0$ yields a regular triangulation $\mathcal T_F$ consisting of $M=d$ simplices of dimension $d-1$. 
Applying \Cref{def:colinear_cone_triangulation} to $\polytope{S}_{\b a, \b b}$ with distinguished vertex $\b v_d\coloneqq (1,\ldots,1,N)$ and base triangulation $\mathcal T_F$ produces a collinear cone triangulation $\mathcal T$ with exactly $N M$ maximal simplices of dimension $d$. 
By \Cref{remark:flag_regular}, the triangulation $\mathcal T$ is also regular.

Now assume that $M=d-1$.
Let $\polytope{Q}\coloneqq\conv(\b v_1,\dots,\b v_d)$ and let $\polytope{H}\coloneqq\conv(\b v_1,\dots,\b v_{d-1})$. 
By \Cref{prop:Interio_points_two}, 
$$\polytope{Q}\cap\Z^d=(\polytope{H}\cap\Z^d)\sqcup\{\b v_d,\b p_1,\dots,\b p_{N-1}\},$$ and $\b p_0\in\relint(\polytope{H})$. 
The stellar subdivision of $\polytope{H}$ with center $\b p_0$ yields a regular triangulation of $\polytope{H}$ with $M=d-1$ simplices of dimension $d-2$. 
Applying \Cref{def:colinear_cone_triangulation} to $\polytope{Q}$ with distinguished vertex $\b v_d$ along the geometric order induced by the line through $\polytope{H}$ and $\b v_d$ produces a regular triangulation of $\polytope{Q}$ consisting of exactly $N(d-1)=NM$ simplices of dimension $d-1$ by \Cref{remark:flag_regular}. 
Finally, coning this triangulation with $\b v_0$ yields a triangulation $\mathcal T$ of $\polytope{S}_{\b a, \b b}$ with exactly $N(d-1)=NM$ maximal simplices of dimension $d$.
This triangulation is regular: a lifting function inducing the regular triangulation of $\polytope Q$ extends to the pyramid by assigning the apex $\b v_0$ a sufficiently small height.
If $d=3$, then $\polytope H$ is a segment and its stellar subdivision is flag.
The iterated one-point extensions preserve flagness by
\Cref{remark:flag_regular}, and coning a flag complex preserves flagness.
Thus $\mathcal T$ is flag when $d=3$ and $M=d-1$.

Since, in both cases, we have $\nVol(\polytope{S}_{\b a, \b b})=NM$,  \Cref{lem:volume_count_unimodular} implies that the triangulation is unimodular.
\end{proof}


We illustrate the previous theorem with an example.
\begin{example}
Let $d = 3$ and let $\polytope{S}_{\b a,\b b}$ be the Hermite normal form simplex associated with $\b a = (1, 1, 5)$ and $\b b = (1, 2, 0)$, \ie

$$\polytope{S}_{\b a, \b b} = \conv\{ \underbrace{(0, 0, 0)}_{\b v_0},\, \underbrace{(1, 0, 0)}_{\b v_1},\, \underbrace{(1, 2, 0)}_{\b v_2},\, \underbrace{(1,1,5)}_{\b v_3 = \b p_5}\}.$$

We know that $\nVol(\polytope{S}_{\b a,\b b}) = 10$. The following simplices give a regular unimodular triangulation of $\polytope{S}_{\b a,\b b}$  (see \Cref{fig:triangulation_d_2} for an illustration):

\begin{multicols}{2}
$\mathcal{T}_{10} = \conv\{(0, 0, 0), (1, 0, 0), (1, 1, 0), (1, 1, 1)\}$

\vspace{0.4em}

$\mathcal{T}_9 = \conv\{(0, 0, 0), (1, 0, 0), (1, 1, 1), (1, 1, 2)\}$

\vspace{0.4em}

$\mathcal{T}_8 = \conv\{(0, 0, 0), (1, 0, 0), (1, 1, 2), (1, 1, 3)\}$

\vspace{0.4em}

$\mathcal{T}_7 = \conv\{(0, 0, 0), (1, 0, 0), (1, 1, 3), (1, 1, 4)\}$

\vspace{0.4em}

$\mathcal{T}_6 = \conv\{(0, 0, 0), (1, 0, 0), (1, 1, 4), (1, 1, 5)\}$

\vspace{0.4em}

$\mathcal{T}_5 = \conv\{(0, 0, 0), (1, 1, 0), (1, 1, 1), (1, 2, 0)\}$

\vspace{0.4em}

$\mathcal{T}_4 = \conv\{(0, 0, 0), (1, 1, 1), (1, 1, 2), (1, 2, 0)\}$

\vspace{0.4em}

$\mathcal{T}_3 = \conv\{(0, 0, 0), (1, 1, 2), (1, 1, 3), (1, 2, 0)\}$

\vspace{0.4em}

$\mathcal{T}_2 = \conv\{(0, 0, 0), (1, 1, 3), (1, 1, 4), (1, 2, 0)\}$

\vspace{0.4em}

$\mathcal{T}_1 = \conv\{(0, 0, 0), (1, 1, 4), (1, 1, 5), (1, 2, 0)\}$
\end{multicols}

\begin{figure}[ht]
\centering
\tdplotsetmaincoords{70}{70} 
\begin{tikzpicture}[tdplot_main_coords, scale=2.0]

\definecolor{myblue}{rgb}{0.6,0.8,1}
\definecolor{mygreen}{rgb}{0.65,0.9,0.65}
\definecolor{myred}{rgb}{1,0.7,0.7}
\definecolor{myyellow}{rgb}{1,1,0.6}
\definecolor{mypurple}{rgb}{0.85,0.75,1}

\begin{scope}[xshift=0cm]

\coordinate (v0) at (0, 0, 0);
\coordinate (v1) at (1, 0, 0);
\coordinate (v2) at (1, 2, 0);
\coordinate (v3) at (1, 1, 5);
\coordinate (p0)  at (1, 1, 0);
\coordinate (p1)  at (1, 1, 1);
\coordinate (p2)  at (1, 1, 2);
\coordinate (p3)  at (1, 1, 3);
\coordinate (p4)  at (1, 1, 4);

\filldraw[fill=mypurple, opacity=0.6] (v0) -- (v2) -- (p4) -- cycle;
\filldraw[fill=mypurple, opacity=0.6] (v0) -- (v2) -- (v3) -- cycle;
\filldraw[fill=mypurple, opacity=0.6] (v0) -- (p4) -- (v3) -- cycle;
\filldraw[fill=mypurple, opacity=0.6] (v2) -- (p4) -- (v3) -- cycle;

\filldraw[fill=myyellow, opacity=0.6] (v0) -- (v2) -- (p3) -- cycle;
\filldraw[fill=myyellow, opacity=0.6] (v0) -- (v2) -- (p4) -- cycle;
\filldraw[fill=myyellow, opacity=0.6] (v0) -- (p3) -- (p4) -- cycle;
\filldraw[fill=myyellow, opacity=0.6] (v2) -- (p3) -- (p4) -- cycle;

\filldraw[fill=myred, opacity=0.6] (v0) -- (v2) -- (p2) -- cycle;
\filldraw[fill=myred, opacity=0.6] (v0) -- (v2) -- (p3) -- cycle;
\filldraw[fill=myred, opacity=0.6] (v0) -- (p2) -- (p3) -- cycle;
\filldraw[fill=myred, opacity=0.6] (v2) -- (p2) -- (p3) -- cycle;

\filldraw[fill=mygreen, opacity=0.6] (v0) -- (v2) -- (p1) -- cycle;
\filldraw[fill=mygreen, opacity=0.6] (v0) -- (v2) -- (p2) -- cycle;
\filldraw[fill=mygreen, opacity=0.6] (v0) -- (p1) -- (p2) -- cycle;
\filldraw[fill=mygreen, opacity=0.6] (v2) -- (p1) -- (p2) -- cycle;

\filldraw[fill=myblue, opacity=0.6] (v0) -- (v2) -- (p1) -- cycle;
\filldraw[fill=myblue, opacity=0.6] (v0) -- (v2) -- (p0) -- cycle;
\filldraw[fill=myblue, opacity=0.6] (v0) -- (p1) -- (p0) -- cycle;
\filldraw[fill=myblue, opacity=0.6] (v2) -- (p1) -- (p0) -- cycle;

\filldraw[fill=myblue, opacity=0.6] (v0) -- (v1) -- (p1) -- cycle;
\filldraw[fill=myblue, opacity=0.6] (v0) -- (v1) -- (p0) -- cycle;
\filldraw[fill=myblue, opacity=0.6] (v0) -- (p1) -- (p0) -- cycle;
\filldraw[fill=myblue, opacity=0.6] (v1) -- (p1) -- (p0) -- cycle;

\filldraw[fill=mygreen, opacity=0.6] (v0) -- (v1) -- (p1) -- cycle;
\filldraw[fill=mygreen, opacity=0.6] (v0) -- (v1) -- (p2) -- cycle;
\filldraw[fill=mygreen, opacity=0.6] (v0) -- (p1) -- (p2) -- cycle;
\filldraw[fill=mygreen, opacity=0.6] (v1) -- (p1) -- (p2) -- cycle;

\filldraw[fill=myred, opacity=0.6] (v0) -- (v1) -- (p2) -- cycle;
\filldraw[fill=myred, opacity=0.6] (v0) -- (v1) -- (p3) -- cycle;
\filldraw[fill=myred, opacity=0.6] (v0) -- (p2) -- (p3) -- cycle;
\filldraw[fill=myred, opacity=0.6] (v1) -- (p2) -- (p3) -- cycle;

\filldraw[fill=myyellow, opacity=0.6] (v0) -- (v1) -- (p3) -- cycle;
\filldraw[fill=myyellow, opacity=0.6] (v0) -- (v1) -- (p4) -- cycle;
\filldraw[fill=myyellow, opacity=0.6] (v0) -- (p3) -- (p4) -- cycle;
\filldraw[fill=myyellow, opacity=0.6] (v1) -- (p3) -- (p4) -- cycle;

\filldraw[fill=mypurple, opacity=0.6] (v0) -- (v1) -- (p4) -- cycle;
\filldraw[fill=mypurple, opacity=0.6] (v0) -- (v1) -- (v3) -- cycle;
\filldraw[fill=mypurple, opacity=0.6] (v0) -- (p4) -- (v3) -- cycle;
\filldraw[fill=mypurple, opacity=0.6] (v1) -- (p4) -- (v3) -- cycle;

\foreach \pt/\name in {v0/$\b v_0$, v1/$\b v_1$, v2/$\b v_2$, v3/$\b v_3$, p0/$\b p_0$, p1/$\b p_1$, p2/$\b p_2$, p3/$\b p_3$, p4/$\b p_4$} {
  \filldraw[black] (\pt) circle (0.4pt) node[anchor=north east] {\name};
}
\end{scope}
\end{tikzpicture} 
\caption{Unimodular triangulation for the two-row Hermite normal form simplex $\polytope{S}_{\b a, \b b}$ associated with $\b a = (1, 1, 5)$ and $\b b = (1, 2, 0)$.}
\label{fig:triangulation_d_2}
\end{figure}
\end{example}

The triangulation defined in \Cref{thm_UT_2} is of particular interest because the one-row Hermite normal form simplex associated with $\b a = (1, \ldots, 1, N)$ does \textbf{not} satisfy the IDP for any $N \in \N_{>1}$ (see \Cref{prop:NO_IPD_1N}). 
In contrast, consider $\b a=(1,\ldots,1,N)$ and 
$\b b=(M-1,\ldots,M-1,M,0)$.
If $M\in\{d-1,d\}$ and $d>2$, then the corresponding two-row Hermite normal form simplex $\polytope{S}_{\b a,\b b}$ does satisfy the IDP. 
In other words, adding a second row $\b b = (M - 1, \ldots, M - 1, M, 0)$ with $M \in \{d - 1, d\}$ provides sufficient structure for the two-row Hermite normal form simplex $\polytope{S}_{\b a, \b b}$ to admit a unimodular triangulation, even when the last row is given by $\b a = (1, \ldots, 1, N)$ with $N \in \N_{>1}$.
On the other hand, if we consider $M < d - 1$, we have the following result.

\begin{proposition}\label{prop:non_idp_two_rows}
Let $\polytope{S}_{\b a,\b b}$ be the two-row Hermite normal form simplex associated with 
$\b a=(1,\ldots,1,N)$ and $\b b=(M-1,\ldots,M-1,M,0)$.
If $N\in\N_{\geq 2}$ and $M<d-1$, then $\polytope{S}_{\b a,\b b}$ does not satisfy the integer decomposition property.
\end{proposition}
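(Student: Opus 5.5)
We follow the argument of \Cref{thm:Not_IDP_1}: show that $\polytope{S}_{\b a,\b b}$ has no lattice points besides its vertices, then compare Hilbert series.

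\textbf{Step 1: lattice points.} We use \Cref{remark:Interior_poins_2}. Write $M=d-m$ with $m\geq 2$. This covers $1\leq M\leq d-2$, so $2\leq m\leq d-1$ as the remark requires (note $M\geq 1$ since $M=a_{d-1,d-1}\in\N$).

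Let $\b x$ be a non-vertex lattice point, written as a convex combination of the vertices. The first part of the proof of \Cref{prop:Interio_points_two} does not use $M\in\{d-1,d\}$. It gives $\lambda_j=\lambda_{d-1}$ for $1\leq j\leq d-2$, hence
$$1=\lambda_0+(d-1)\lambda_{d-1}+\lambda_d.$$
We must justify $x_{d-1}\geq 1$, which the remark asserts without proof. Suppose $x_{d-1}=0$. Then $M\lambda_{d-1}+\lambda_d=0$, so $\lambda_{d-1}=\lambda_d=0$. Also $x_j=x_{d-1}$ gives $\lambda_j=0$ for $1\leq j\leq d-2$. Hence $\b x=\b v_0$, a contradiction.

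So $x_{d-1}\geq 1$, and substituting $x_{d-1}=M\lambda_{d-1}+\lambda_d$ gives
$$\lambda_0=1-x_{d-1}-(m-1)\lambda_{d-1}\leq -(m-1)\lambda_{d-1}.$$
If $\lambda_{d-1}>0$, then $\lambda_0<0$, a contradiction. If $\lambda_{d-1}=0$, then all $\lambda_j$ with $1\leq j\leq d-1$ vanish and $x_{d-1}=\lambda_d<1$. This contradicts $x_{d-1}\geq 1$ being an integer. Hence the only lattice points of $\polytope{S}_{\b a,\b b}$ are its $d+1$ vertices.

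\textbf{Step 2: Hilbert series comparison.} The homogenized lattice points are the rows of the invertible matrix $A$. So the toric ideal is zero, $A_{\polytope{S}_{\b a,\b b}}$ is a polynomial ring in $d+1$ variables, and its Hilbert series is $1/(1-t)^{d+1}$.

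The Ehrhart series is $h^\ast(t)/(1-t)^{d+1}$ with $h^\ast(1)=\nVol(\polytope{S}_{\b a,\b b})=MN\geq N\geq 2$. Therefore $h^\ast(t)\neq 1$, the two series differ, and $\polytope{S}_{\b a,\b b}$ is not IDP.

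A more elementary alternative to Step 2: a simplex whose only lattice points are its vertices and whose normalized volume exceeds $1$ has a nonzero element of $\Gamma$. Taking $m$ equal to the age of that element gives a lattice point of $m\polytope{S}_{\b a,\b b}$ that is not a sum of vertices.

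The main obstacle is Step 1, namely the case analysis making \Cref{remark:Interior_poins_2} rigorous: establishing $x_{d-1}\geq 1$ and handling $\lambda_{d-1}=0$. Step 2 is identical to \Cref{thm:Not_IDP_1}.
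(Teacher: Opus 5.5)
Your proposal is correct and follows the paper's proof exactly: the paper likewise uses \Cref{remark:Interior_poins_2} to conclude that the only lattice points are the vertices, and then repeats the Hilbert-series comparison from \Cref{thm:Not_IDP_1}. Your verification of $x_{d-1}\geq 1$ and your separate handling of $\lambda_{d-1}=0$ are worth keeping, because the remark's conclusion $\lambda_0<0$ does not follow literally when $\lambda_{d-1}=0$; that case has to be excluded via $x_{d-1}=\lambda_d<1$, as you do.
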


\begin{proof}
By \Cref{remark:Interior_poins_2}, the only lattice points of
$\polytope{S}_{\b a,\b b}$ are its vertices. The same arguments as in the proof of \Cref{thm:Not_IDP_1} show that $\polytope{S}_{\b a,\b b}$ does not satisfy the integer decomposition property. 
\end{proof}

\begin{corollary}\label{coro:idp_classification_two_rows}
Let $1\leq M\leq d$, $N\geq2$, and let $\polytope{S}_{\b a,\b b}$ be the Hermite normal form simplex associated with $\b a=(1,\ldots,1,N)$ and $\b b=(M-1,\ldots,M-1,M,0)$.
Then the following are equivalent:
\begin{enumerate}
\item[(1)] $M\in\{d-1,d\}$;
\item[(2)] $\polytope{S}_{\b a,\b b}$ admits a unimodular triangulation;
\item[(3)] $\polytope{S}_{\b a,\b b}$ has the integer decomposition property.
\end{enumerate}
\end{corollary}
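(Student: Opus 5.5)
The plan is to prove the cycle of implications $(1)\Rightarrow(2)\Rightarrow(3)\Rightarrow(1)$, where each step is essentially already available.

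For $(1)\Rightarrow(2)$, assume $M\in\{d-1,d\}$ and $N\geq 2$. Then \Cref{thm_UT_2} directly provides a regular unimodular triangulation of $\polytope{S}_{\b a,\b b}$.

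For $(2)\Rightarrow(3)$, we use the general fact recalled in the preliminaries: any lattice polytope admitting a unimodular triangulation has the integer decomposition property \cite[Sec.~1.2.5]{Unimodular_triangulation_positive_result}.

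For $(3)\Rightarrow(1)$, we argue by contraposition. Assume $1\leq M\leq d$ but $M\notin\{d-1,d\}$, so that $M<d-1$. Then \Cref{prop:non_idp_two_rows} applies and shows that $\polytope{S}_{\b a,\b b}$ is not IDP.

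The only point needing care is the range of $M$ in that last step. \Cref{remark:Interior_poins_2} is phrased for $M=d-m$ with $2\leq m\leq d-1$, which is exactly $1\leq M\leq d-2$, the full range needed here. Its proof uses $x_j\geq 1$ for a non-vertex lattice point, and I would justify this as follows. Suppose $\b x$ is a non-vertex lattice point. Since $x_{d-1}=M\lambda_{d-1}+\lambda_d$ and all $\lambda_i<1$, either $x_{d-1}\geq1$, or $\lambda_{d-1}=\lambda_d=0$. In the latter case $\b x$ lies in the unimodular face $\conv(\b v_0,\dots,\b v_{d-2})$, so it would be a vertex, which is excluded.

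With the vertices as the only lattice points, the Hilbert series argument of \Cref{thm:Not_IDP_1} applies. The normalized volume is $MN\geq 2>1$, so $h^\ast\neq 1$ and the Hilbert series of $A_{\polytope{S}_{\b a,\b b}}$ differs from the Ehrhart series.

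I expect no real obstacle. The main thing to verify is that \Cref{prop:non_idp_two_rows} covers the boundary case $M=1$, and I do not see that case failing.
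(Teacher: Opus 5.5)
Your proposal is correct and follows the paper's proof: $(1)\Rightarrow(2)$ by \Cref{thm_UT_2}, $(2)\Rightarrow(3)$ by the general fact that unimodular triangulations imply IDP, and $(3)\Rightarrow(1)$ by contraposition via \Cref{prop:non_idp_two_rows} once $M\le d$ and $M\notin\{d-1,d\}$ force $M<d-1$. Your extra checks are sound details that the paper leaves implicit: that \Cref{remark:Interior_poins_2} covers exactly $1\le M\le d-2$ (including $M=1$), and that $x_{d-1}\ge 1$ holds because otherwise $\b x$ would lie in the unimodular face $\conv(\b v_0,\dots,\b v_{d-2})$.
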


\begin{proof}
The implication $(1)\Rightarrow(2)$ is \Cref{thm_UT_2}, and
$(2)\Rightarrow(3)$ holds for every lattice polytope.  If $M<d-1$, then
\Cref{prop:non_idp_two_rows} shows that the simplex does not satisfy the integer decomposition property, proving
$(3)\Rightarrow(1)$ under the assumption $M\leq d$.
\end{proof}

\subsection{Ehrhart theoretic properties of  $\polytope{S}_{\b a, \b b}$}
We start with the computation of the $h^\ast$-polynomial for the specific class of two-row Hermite normal form simplices we are considering.

\begin{theorem}\label{thm:h_vector_twoR}
Let $N\in \N_{>1}$ and let $\polytope{S}_{\b a, \b b} $ be the two-row Hermite normal form simplex associated with  $\b a = (1, \dots, 1, N)$  and $\b b = (M-1, \ldots, M-1, M , 0)$.
Then,
\[
h^\ast_{\polytope{S}_{\b a,\b b}}(t)=\begin{cases}
     1 + N\sum_{j = 1}^{d-2}t^j + (N-1)t^{d-1}, &\qquad\text{ if } M = d - 1,\\
      1 + N\sum_{j = 1}^{d-1}t^j + (N-1)t^{d},  &\qquad\text{ if } M=d.
\end{cases}
\]
\end{theorem}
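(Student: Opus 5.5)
The plan is to read off the $h^\ast$-polynomial from the unimodular triangulations built in the proof of \Cref{thm_UT_2}, using \Cref{thm:Shelling_order} and the shelling of collinear cone triangulations from \Cref{lem:shelling_collinear_cone} and \Cref{thm:hstar_collinear_cone}.

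\textbf{Case $M=d$.} The triangulation is the collinear cone triangulation of $\polytope{S}_{\b a,\b b}$ with distinguished vertex $\b v_d$. Its base facet is $\polytope F=\conv(\b v_0,\dots,\b v_{d-1})$. By \Cref{prop:Interio_points_two}, $\polytope F$ has no lattice points except its vertices and the single relative interior point $\b p_0$. So the hypotheses of \Cref{thm:hstar_collinear_cone} hold, with the interior-point case of \Cref{lem:shelling_collinear_cone}. As in the proof of \Cref{thm:h_vector_OneR}, $h^\ast_{\polytope F}(t)=1+t+\dots+t^{d-1}$, via the star triangulation of $\polytope F$ from $\b p_0$.

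The layers are indexed by the points $\b p_1,\dots,\b p_{N-1},\b v_d$ on the line, so there are $N$ layers. In the notation of the proof of \Cref{thm:hstar_collinear_cone}, this gives $m=N$. Indeed, the shelling argument there produces $\sum_{j=1}^d t^{j-1}+(m-1)\sum_{j=1}^d t^j$, and the simplex count is $Nd=NM=\nVol$. I will double-check that $m$ counts layers rather than interior points, matching the one-row case $N=dk$, where $m=k$. The result is
\[
h^\ast(t)=1+\dots+t^{d-1}+(N-1)(t+\dots+t^d)=1+N\sum_{j=1}^{d-1}t^j+(N-1)t^d.
\]

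\textbf{Case $M=d-1$.} The triangulation is the cone from $\b v_0$ over a unimodular triangulation of the facet $\polytope Q=\conv(\b v_1,\dots,\b v_d)$. That facet triangulation is itself the collinear cone triangulation of the $(d-1)$-simplex $\polytope Q$, with base $\polytope H$ and interior point $\b p_0\in\relint(\polytope H)$.

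Two facts about coning give the reduction. Coning a shelling from an apex preserves both the shelling order and the statistic $\omega_i$. The coned triangulation is also unimodular, with exactly $NM$ simplices. Therefore $h^\ast_{\polytope{S}_{\b a,\b b}}(t)$ equals the $h$-polynomial of the shelled triangulation of $\polytope Q$.

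Now apply the computation from the proof of \Cref{thm:hstar_collinear_cone} in dimension $d-1$, again with $N$ layers. This gives
\[
h^\ast(t)=\sum_{j=1}^{d-1}t^{j-1}+(N-1)\sum_{j=1}^{d-1}t^j=1+N\sum_{j=1}^{d-2}t^j+(N-1)t^{d-1}.
\]

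\textbf{Main obstacle.} The difficulty is bookkeeping rather than ideas. First, \Cref{thm:hstar_collinear_cone} must be applied in the correct ambient lattice: for $\polytope Q$ this is the affine lattice of its hyperplane, and unimodularity there follows from unimodularity of the cones over $\b v_0$. Second, the parameter $m$, i.e. the number of layers, must be correctly identified as $N$.

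As a sanity check, evaluating at $t=1$ gives $1+N(d-2)+N-1=N(d-1)=NM$ in the first case and $Nd=NM$ in the second, matching $\nVol(\polytope{S}_{\b a,\b b})$. The $d=3$ example, with $10$ simplices, can also be checked directly.
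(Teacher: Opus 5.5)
Your proposal is correct and follows essentially the same route as the paper. For $M=d$ you use the collinear cone shelling with $N$ layers, rightly taking $m=N$ as the paper does, even though the statement of \Cref{thm:hstar_collinear_cone} literally defines $m$ as the number of interior points on $L$, which here is $N-1$. For $M=d-1$ you cone the shelling of $\polytope Q$ from $\b v_0$ and apply \Cref{thm:Shelling_order} directly to the unimodular triangulation of $\polytope{S}_{\b a,\b b}$, which is equivalent to the paper's lattice-pyramid identity $h^\ast_{\polytope{S}_{\b a,\b b}}=h^\ast_{\polytope Q}$; on your route the ambient-lattice concern you raise for $\polytope Q$ does not actually arise.
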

\begin{proof}
Let $\mathcal{T}$ be the unimodular triangulation of $\polytope{S}_{\b a, \b b}$ from \Cref{thm_UT_2}.
Assume first that $M=d$. 
Let $\polytope{F}\coloneqq\conv(\b v_0,\dots,\b v_{d-1})$. 
By the proof of \Cref{thm_UT_2}, the simplex $\polytope{S}_{\b a,\b b}$ satisfies Definition \ref{def:colinear_cone_triangulation} with base facet $\polytope{F}$, and the lattice points $\b p_1,\dots,\b p_N=\b v_d$ lying on a fixed line. 
Hence $m=N$ in \Cref{thm:hstar_collinear_cone}. 
The triangulation of $\polytope{F}$ induced by $\mathcal T$ is the stellar subdivision of $\polytope{F}$ with center $\b p_0$. 
Since $\mathcal T$ is unimodular, this induced triangulation is unimodular. 
Hence $h^\ast_{\polytope{F}}(t)=1+t+\cdots+t^{d-1}$. 
Therefore \Cref{thm:hstar_collinear_cone} gives
$$
h^\ast(t)=h^\ast_{\polytope{F}}(t)+(N-1)(t+t^2+\cdots+t^d)=1+N\sum_{j=1}^{d-1}t^j+(N-1)t^d.
$$

Now assume that $M=d-1$.
Let $\polytope{Q}\coloneqq\conv(\b v_1,\dots,\b v_d)$. Note that  $\polytope{S}_{\b a,\b b}=\conv(\b v_0,\polytope{Q})$ is a lattice pyramid over $\polytope{Q}$ and hence $h^\ast_{\polytope{S}_{\b a,\b b}}(t)=h^\ast_{\polytope{Q}}(t)$,
The same line of argument as in the previous case shows 

$$
h^\ast_{\polytope{Q}}(t)=h^\ast_{\polytope{H}}(t)+(N-1)(t+t^2+\cdots+t^{d-1})=1+N\sum_{j=1}^{d-2}t^j+(N-1)t^{d-1}
$$
which finishes the proof.
\end{proof}

Next we compute the local $h^\ast$-polynomial.

\begin{theorem}\label{thm:local_h_two_rows}
Let $N\in \N_{>1}$ and let $\polytope{S}_{\b a, \b b} $ be the two-row Hermite normal form simplex associated with  $\b a = (1, \dots, 1, N)$  and $\b b = (M-1, \ldots, M-1, M , 0)$.
Then,
$$
\ell^\ast_{\polytope{S}_{\b a,\b b}}(t)=
\begin{cases}
0, & \text{if } M=d-1,\\[0.2em]
(N-1)\displaystyle\sum_{i=1}^d t^i, &\text{if }M=d.
\end{cases}
$$
\end{theorem}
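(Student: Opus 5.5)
For $M=d-1$ we use the pyramid structure; for $M=d$ we enumerate $\Gamma(\polytope{S}_{\b a,\b b})$ explicitly via barycentric coordinates and compute ages, as in the proof of \Cref{thm:Local_one_row_dk}.

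\emph{Case $M=d-1$.} As in the proof of \Cref{thm:h_vector_twoR}, $\polytope{S}_{\b a,\b b}=\conv(\b v_0,\polytope{Q})$ is a lattice pyramid with apex $\b v_0$ over $\polytope{Q}=\conv(\b v_1,\dots,\b v_d)$. The parallelepiped group is computed from the matrix $A$ with rows $(1,\b v_i)$. Since $\polytope{Q}$ lies in the hyperplane $x_1=1$ at lattice distance one from $\b v_0=\b 0$, every $\b x\in\Lambda$ has its coordinate at position $0$ (corresponding to $\b v_0$) integral.

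To check this, write a lattice point $(x_0,\b y)\in\Z^{d+1}$ as $\sum_i\lambda_i(1,\b v_i)$. Every $\b v_i$ with $i\geq 1$ has first coordinate $1$, so $\lambda_1+\cdots+\lambda_d=y_1\in\Z$. Hence $\lambda_0=x_0-y_1\in\Z$. Thus no element of $\Gamma$ lies in $(0,1)^{d+1}$, and \eqref{eq:local_h_S} gives $\ell^\ast=0$.

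\emph{Case $M=d$.} Here we enumerate $\Gamma$ through barycentric coordinates of $(x_0,\b y)\in\Z^{d+1}$, using the vertices from \eqref{eq:IDP2} with $M=d$:
\[
\lambda_d=\tfrac{y_d}{N},\qquad \lambda_{d-1}=\tfrac{y_{d-1}-\lambda_d}{d},\qquad \lambda_j=y_j-(d-1)\lambda_{d-1}-\lambda_d\quad (1\le j\le d-2),
\]
and $\lambda_0=x_0-\sum_{j\ge1}\lambda_j$. Modulo $\Z$, each class is determined by $\beta\coloneqq y_d \bmod N$ and $\alpha\coloneqq y_{d-1}\bmod d$. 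This gives $dN=\nVol(\polytope{S}_{\b a,\b b})$ classes, as expected.

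Put $\theta\coloneqq\operatorname{frac}\big((\alpha-\beta/N)/d\big)$. Then the fractional parts are
\[
\operatorname{frac}(\lambda_d)=\beta/N,\qquad \operatorname{frac}(\lambda_{d-1})=\theta,\qquad \operatorname{frac}(\lambda_j)=\operatorname{frac}\big(-(d-1)\theta-\beta/N\big)\ (1\le j\le d-2).
\]
For $\lambda_0$ one simplifies $-\sum_{j\geq1}\lambda_j \equiv (d-2)\big((d-1)\theta+\beta/N\big)-\theta-\beta/N$. Both expressions reduce using $d\theta\equiv -\beta/N$. This gives $(d-1)\theta+\beta/N\equiv -\theta$, so $\operatorname{frac}(\lambda_j)=\theta$ for $1\le j\le d-2$, and $\operatorname{frac}(\lambda_0)=\operatorname{frac}(-(d-1)\theta)=\operatorname{frac}(\theta+\beta/N)$.

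\emph{Interior condition.} If $\beta=0$, then $\lambda_d$ is integral, so the point is not interior. If $\beta\neq0$, then $\theta=(\alpha'-\beta/N)/d$ with $\alpha'\in\{1,\dots,d\}$, so $\theta\in(0,1)$ and $\theta$ is not an integer. It remains to show $\theta+\beta/N\notin\Z$. Since $\theta+\beta/N=\big(\alpha'+(d-1)\beta/N\big)/d$, this holds exactly when $\alpha'+(d-1)\beta/N\notin d\Z$; this must be checked, presumably using that $0<\beta/N<1$ and the range of $\alpha'$.

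\emph{Ages.} The age is $(d-1)\theta+\beta/N+\operatorname{frac}(\theta+\beta/N)$. With $\theta+\beta/N<1$ or $\geq1$ depending on $\alpha'$, this should come out to an integer depending only on $\alpha'$ (namely $\alpha'$), in analogy with $\operatorname{age}=q+1$ in \Cref{thm:Local_one_row_dk}. Summing over $\beta\in\{1,\dots,N-1\}$ and $\alpha'\in\{1,\dots,d\}$ then gives $(N-1)\sum_{i=1}^d t^i$.

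The main obstacle is this last bookkeeping: verifying that no class with $\beta\ne0$ has a zero coordinate (the $\lambda_0$ coordinate is the delicate one), and that the age map $\alpha'\mapsto$ age is the bijection onto $\{1,\dots,d\}$. If this becomes messy, the fallback is a consistency check: for $M=d$ the number of interior classes must equal $h^\ast(1)$ minus the boundary contributions, using \Cref{thm:h_vector_twoR} and the facet $\polytope{F}$.
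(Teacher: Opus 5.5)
Your strategy matches the paper's. For $M=d-1$ the paper cites the vanishing of $\ell^\ast$ for lattice pyramids, which your direct argument would reprove. For $M=d$ it enumerates $\Gamma$ via the generators $\b g=(\frac1d,\dots,\frac1d,0)$ and $\b h=(-\frac1{dN},\dots,-\frac1{dN},\frac1N)$, which is your $(\alpha,\beta)$ parametrization. However, two steps are wrong as written.

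First, in the case $M=d-1$ with $d\geq4$, the vertices $\b v_j=e_j$ for $2\leq j\leq d-2$ have first coordinate $0$. So $\polytope Q\not\subset\{x_1=1\}$, and the identity $\lambda_1+\cdots+\lambda_d=y_1$ is false. The correct height function is $\ell(\b x)=x_1+\cdots+x_{d-2}+(3-d)x_{d-1}$. It is integral and takes the value $1$ on $\b v_1,\dots,\b v_d$; for $\b v_{d-1}$ this reads $(d-2)^2-(d-3)(d-1)=1$. With $\ell(\b y)$ in place of $y_1$ you get $\lambda_0=x_0-\ell(\b y)\in\Z$, and this case is done.

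Second, and more seriously, in the case $M=d$ your reduction of $\lambda_0$ drops a term. From your own expression, $(d-2)\big((d-1)\theta+\beta/N\big)-\theta-\beta/N\equiv-(d-2)\theta-\theta-\beta/N=-(d-1)\theta-\beta/N\equiv\theta$, not $-(d-1)\theta$. So $\operatorname{frac}(\lambda_0)=\theta$, the same as for $\lambda_1,\dots,\lambda_{d-1}$. The paper's generators show this directly: the first $d$ coordinates of every element of $\Gamma$ agree.

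With your value $\operatorname{frac}(\theta+\beta/N)$, the age would be $\alpha'+\beta/N$ or $\alpha'+\beta/N-1$, which is never an integer. Your interiority condition would also fail, for instance for $d=3$, $N=2$, $\beta=1$, $\alpha'=2$, where $\theta+\beta/N=1$. So the bookkeeping you left open (``presumably'', ``should come out'') cannot be completed as set up, and the fallback consistency check is not a proof.

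After the correction nothing delicate remains. For $\beta\neq0$ the box point is $(\theta,\dots,\theta,\beta/N)$ with $\theta=(\alpha'-\beta/N)/d\in(0,1)$, so it is interior. Its age is $d\theta+\beta/N=\alpha'$. Summing over $\alpha'\in\{1,\dots,d\}$ and $\beta\in\{1,\dots,N-1\}$ gives $(N-1)\sum_{i=1}^d t^i$.

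Finally, state explicitly that you first normalize $0\leq y_d<N$ by subtracting multiples of the row $(1,\b v_d)$. Without that normalization, $\theta$ is not a function of $y_{d-1}\bmod d$ and $y_d\bmod N$ alone.
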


\begin{proof}
Assume first that \(M=d-1\), and set $\polytope Q\coloneqq\conv(\b v_1,\ldots,\b v_d)$. We have already seen in the proof of \Cref{thm:h_vector_twoR} that $\polytope{S}_{\b a,\b b}
=
\conv(\b v_0,\polytope Q)$ 
is a lattice pyramid with apex \(\b v_0\) and base \(\polytope Q\). It now follows directly from \cite[Proposition 3.5]{BorKreNill} that $\ell^\ast_{\polytope{S}_{\b a,\b b}}(t)=0$.

Now assume that \(M=d\). We will use \eqref{eq:local_h_S} to compute the local $h^\ast$-polynomial in this case. Let \(A\) be the homogenized vertex matrix. From a direct (easy) computation of $A^{-1}$ we immediately get that the fundamental parallelepiped group $\Gamma\coloneqq \Z^{d+1}A^{-1}/\Z^{d+1}$ is generated by 
 \[\b g
=
\left(
\underbrace{\frac1d,\ldots,\frac1d}_{d\text{ coordinates}},0
\right) \quad \text{and}\quad \b h
=
\left(
\underbrace{-\frac1{dN},\ldots,-\frac1{dN}}_{d\text{ coordinates}},
\frac1N
\right).
\]
Since $\#\Gamma=|\det(A)|=dN$, it follows that every element of $\Gamma$ is of the form $\b x_{q,m}
=
\operatorname{frac}(q\b g+m\b h)$, where $0\leq q\leq d-1$ and $0\leq m\leq N-1$ (and this representation is unique).

If \(m=0\), then \(\b x_{q,0}=0\) is zero and hence, by definition,  these do not contribute to the local \(h^\ast\)-polynomial (cf. \eqref{eq:local_h_S}).

Now let \(1\leq m\leq N-1\). If \(q=0\), then $\b x_{0,m}=\left(
1-\frac{m}{dN},\ldots,1-\frac{m}{dN},\frac{m}{N}\right)$,
so all its coordinates are nonzero and 
\[
\operatorname{age}(\b x_{0,m})
=
d\left(1-\frac{m}{dN}\right)+\frac{m}{N}
=
d.
\]

If \(1\leq q\leq d-1\), then $0<
\frac{q}{d}-\frac{m}{dN}
<1$, and hence $\b x_{q,m}
=
\left(
\frac{q}{d}-\frac{m}{dN},\ldots,
\frac{q}{d}-\frac{m}{dN},
\frac{m}{N}
\right)$.
Again, all coordinates are nonzero, and
\[
\operatorname{age}(\b x_{q,m})
=
d\left(\frac{q}{d}-\frac{m}{dN}\right)+\frac{m}{N}
=
q.
\]
Combining these computations, we get 
\[
\ell^\ast_{\polytope{S}_{\b a,\b b}}(t)
=
\sum_{m=1}^{N-1}\sum_{j=1}^d t^j
=
(N-1)\sum_{j=1}^d t^j,
\]
which completes the proof.
\end{proof}
\begin{theorem}
Let $N\in \N_{>1}$ and let $\polytope{S}_{\b a, \b b}$ be the Hermite normal form simplex associated with $\b a = (1, \dots, 1, N)$ and $\b b = (M-1, \ldots, M-1,M,0)$.  
If $M\in\{ d - 1,d\}$, then $\polytope{S}_{\b a, \b b}$ is Ehrhart positive.
\end{theorem}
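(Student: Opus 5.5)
We follow the strategy of the proof of \Cref{thm:Ehrhar_one_row}. We start from the $h^\ast$-polynomial given by \Cref{thm:h_vector_twoR} and pass to the Ehrhart polynomial via the binomial expansion $\mathcal L_{\polytope P}(t)=\sum_{j=0}^{d}h^\ast_j\binom{t+d-j}{d}$.

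\emph{Case $M=d$.} Here the $h^\ast$-polynomial $1+N\sum_{j=1}^{d-1}t^j+(N-1)t^d$ is exactly the one from the one-row case $N'=dk$ with $k$ replaced by $N$. Hence the computation in the proof of \Cref{thm:Ehrhar_one_row}, which uses \Cref{lem:Sum_binomial}, gives
\[
\mathcal L_{\polytope{S}_{\b a,\b b}}(t)=\binom{t+d}{d}-\binom{t}{d}+N\left(\binom{t+d}{d+1}-\binom{t}{d+1}\right).
\]
By \Cref{lem:Binomial_difference}, the first difference has strictly positive coefficients in degrees $0,\dots,d-1$. The second difference has nonnegative coefficients and a positive leading coefficient in degree $d$. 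So every coefficient is positive, and the claim follows in this case.

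\emph{Case $M=d-1$.} Now $h^\ast_j=N$ for $1\le j\le d-2$, $h^\ast_{d-1}=N-1$, and $h^\ast_d=0$. The binomial expansion gives
\[
\mathcal L(t)=\binom{t+d}{d}+N\sum_{j=1}^{d-1}\binom{t+d-j}{d}-\binom{t+1}{d}.
\]
Reindexing, the middle sum is $\sum_{i=1}^{d-1}\binom{t+i}{d}$. By \Cref{lem:Sum_binomial}, the full sum $\sum_{i=0}^{d-1}\binom{t+i}{d}$ equals $\binom{t+d}{d+1}-\binom{t}{d+1}$. Removing the $i=0$ term $\binom{t}{d}$ and using Pascal's rule $\binom{t}{d+1}+\binom{t}{d}=\binom{t+1}{d+1}$, we get
\[
\mathcal L_{\polytope{S}_{\b a,\b b}}(t)=\binom{t+d}{d}-\binom{t+1}{d}+N\left(\binom{t+d}{d+1}-\binom{t+1}{d+1}\right)=\tilde f(t)+N\tilde g(t).
\]
\Cref{coro:Binomial_difference_2} then finishes the argument. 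The polynomial $\tilde f$ has strictly positive coefficients in degrees $0,\dots,d-1$. The polynomial $\tilde g$ has strictly positive coefficients in degrees $1,\dots,d$ and zero constant term. Since $N\ge 2>0$, all coefficients of $\mathcal L$ are strictly positive.

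Alternatively, $\polytope{S}_{\b a,\b b}$ is a lattice pyramid over $\polytope Q$, so its Ehrhart polynomial is determined by that of $\polytope Q$. The direct binomial computation above is cleaner, however.

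The main obstacle is the bookkeeping in the $M=d-1$ case. One has to reindex the sum correctly and check that the correction term $-\binom{t+1}{d}$ combines with the boundary terms into exactly $\tilde f+N\tilde g$. Once that identity is verified, positivity is immediate from \Cref{coro:Binomial_difference_2}.
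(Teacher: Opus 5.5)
Your proposal is correct and follows the paper's proof. For $M=d$ you reduce to the one-row computation of \Cref{thm:Ehrhar_one_row} with $k$ replaced by $N$; for $M=d-1$ you derive $\mathcal L_{\polytope{S}_{\b a,\b b}}(t)=\tilde f(t)+N\tilde g(t)$ via \Cref{lem:Sum_binomial} and Pascal's rule and conclude with \Cref{coro:Binomial_difference_2}, exactly as the paper does (your coefficient bookkeeping, including where the degree-$d$ coefficient comes from, is simply more explicit).
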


\begin{proof} 
First, suppose that $M=d$. 
Comparing \Cref{thm:h_vector_twoR} and \Cref{thm:h_vector_OneR} we see that the $ h^\ast$-polynomial of $\polytope{S}_{\b a,\b b}$ is of the same form as for $\polytope{S}_{\b a}$. Since both simplices have the same dimension, it follows from  \Cref{thm:Ehrhar_one_row} that $\polytope{S}_{\b a,\b b}$ is Ehrhart positive.

Now suppose that $M = d - 1$. Using \Cref{thm:h_vector_twoR}, the usual transformation from the $h^\ast$-vector to the Ehrhart polynomial and \Cref{lem:Sum_binomial}, an easy computation shows
\begin{align*}
\mathcal{L}_{\polytope{S}_{\b a, \b b}
}(t)= \binom{t + d}{d} - \binom{t + 1}{d} + N \left( \binom{t + d}{d + 1} - \binom{t + 1}{d + 1} \right),
\end{align*}
The claim follows from \Cref{coro:Binomial_difference_2}.
\end{proof}

\begin{remark}
When $M=d$, the Ehrhart polynomial has the same form as in the one-row case
$N=dk$, with the parameter $k$ replaced by the present parameter $N$.
Consequently, \Cref{thm:explicit_non_unimodality_bound} shows that, for
$d\geq3$ and $N\geq K_D(d)$, the Ehrhart polynomial of
$\polytope{S}_{\b a,\b b}$ is not unimodal.
For $M=d-1$, computer experiments suggest that the Ehrhart polynomial is unimodal. 
At present, we do not know a proof of this fact.
\end{remark}

\begin{question}\label{question:unimodal_two_rows}
Let $\polytope{S}_{\b a,\b b}$ be the two-row Hermite normal form simplex associated with $\b a=(1,\ldots,1,N)$ and $\b b=(d-2,\ldots,d-2,d-1,0)$. 
Is the Ehrhart polynomial of $\polytope{S}_{\b a,\b b}$ unimodal for all $N\in\N_{>1}$?
\end{question}

The result above highlights a contrast with the one-row case. 
When $\b a = (1, \dots, 1, N)$ and $d>2$, the associated one-row Hermite normal form simplex fails to satisfy the integer decomposition property (see \Cref{prop:NO_IPD_1N}).
In contrast, the addition of a second row $\b b = (M - 1, \dots, M - 1, M, 0)$ with $M = d - 1$ or $M = d$ not only ensures that the simplex satisfies the integer decomposition property, but also guarantees that all Ehrhart coefficients are strictly positive.

\bibliographystyle{plain}
\bibliography{Bib.bib}

\end{document}